\newcommand{\footremember}[2]{
    \footnote{#2}
    \newcounter{#1}
    \setcounter{#1}{\value{footnote}}
}
\newcommand{\footrecall}[1]{
    \footnotemark[\value{#1}]
} 
\newcommand{\overbar}[1]{\mkern 1.5mu\overline{\mkern-1.5mu#1\mkern-1.5mu}\mkern 1.5mu}
\newcommand{\setdelim}{\; | \;}
\newcommand{\lspace}{{L^2(X,m)}}
\newcommand{\E}{\mathcal{E}}
\newcommand{\F}{\mathcal{F}}
\newcommand{\D}{\mathfrak{D}}
\crefname{property}{Property}{Properties}
\crefname{assum}{Assumption}{Assumptions}
\theoremstyle{plain}
\newtheorem{thm}{Theorem}[section]
\newtheorem{lem}[thm]{Lemma}
\newtheorem{corollary}[thm]{Corollary}
\theoremstyle{definition}
\newtheorem{defn}[thm]{Definition} 
\newtheorem{exmp}[thm]{Example} 
\newtheorem{remark}[thm]{Remark}
\DeclareMathOperator{\normcap}{Cap_{\mathfrak{D}}}
\DeclareMathOperator{\inftynorm}{\Vert \cdot \Vert_\infty}
\DeclareMathOperator{\supp}{supp}
\DeclareMathOperator{\dom}{dom}
\begin{document}

\author{Ralph Chill \footremember{TUD}{Institut f\"ur Analysis, Fakult\"at Mathematik, TU Dresden, 01062 Dresden, Germany, \texttt{ralph.chill@tu-dresden.de}, \texttt{burkhardclaus@gmx.de}} \and Burkhard Claus \footrecall{TUD}}

\date{\today}

\title{Domination of nonlinear semigroups generated by regular, local Dirichlet forms}

\maketitle
 \begin{abstract}
     In this article we study perturbations of local, nonlinear Dirichlet forms on arbitrary topological measure spaces. We show that the semigroup of a local Dirichlet form \(\E\) dominates the semigroup generated by another functional \(\mathcal{F}\) if, and only if, \(\mathcal{F}\) is a specific zero order perturbation of \(\E\). This helps to classify the perturbations that lie between Neumann and Dirichlet boundary conditions.
 \end{abstract}

\section{Introduction}

In \cite{AW_Laplace_what_is_in_between} Arendt and Warma showed the following result: Let \(\big(e^{t \Delta_N} \big)_{t\geq 0}\) and \(\big(e^{t \Delta_D} \big)_{t\geq 0}\) be the heat semigroups generated by the Laplacian with Neumann and Dirichlet boundary conditions, respectively, on some regular domain \(\Omega \subseteq \mathbb{R}^n\) and let \(S\) be another linear \(C_0\)-semigroup generated by a (bilinear, symmetric) Dirichlet form. If \(S\) is dominated by \(\big(e^{t \Delta_N} \big)_{t\geq 0}\) and in turn dominates \(\big(e^{t \Delta_D} \big)_{t\geq 0}\) in the sense that
     \begin{align} \label{eqn:intro_1}
     \vert S_t f\vert \leq  e^{t \Delta_N} \vert f \vert\\
     \vert e^{t \Delta_D} f\vert \leq  S_t \vert f \vert \nonumber
\end{align}
for every \(f \in L^2(\Omega)\) and every \(t\geq 0\), then there is a measure \(\mu\) on \(\partial \Omega\) such that \(S\) is generated by the closure of the form
\begin{align*}
    a(u,v)=\int_\Omega \nabla u \nabla v \; \mathrm{d}x+ \int_{\partial \Omega} u v \; \mathrm{d}\mu
\end{align*}
for \(u,v \in L^2(\Omega) \cap C^\infty(\overbar{\Omega})\). The representation of the form \(a\) shows that \(S\) is a semigroup generated by the Laplacian with Robin boundary conditions. The converse statement, namely that the heat semigroup generated by the Laplacian with Robin boundary conditions corresponding to any admissible measure \(\mu\) is always sandwiched in between the semigroups generated by the Laplacian with Neumann and Dirichlet boundary conditions in the sense of \eqref{eqn:intro_1}, is easy to see. Hence, this result fully characterises the sandwiched semigroups. To be precise, let us mention that he original statement in \cite{AW_Laplace_what_is_in_between} actually assumed that the form generating \(S\) is local, but Akhlil \cite{Ak18} shows that locality follows automatically from the positivity of \(S\) which in turn follows from the fact that \(S\) dominates \(\big(e^{t \Delta_D} \big)_{t\geq 0}\). In \cite{CW_p_Laplace_what_is_in_between} an analogous theorem characterizing sandwiched semigroups was shown for the \(p\)-Laplace operator and its associated semigroups. The main aim of this paper is to show a similar result for a large class of nonlinear Dirichlet forms as defined in \cite{CG_Nonlinear_Dirichlet_Forms,Cl23}. Both the Laplace and the \(p\)-Laplace operator are examples of such nonlinear Dirichlet forms.

In \cite{Cl23}, the second author introduced Dirichlet spaces and capacities associated to nonlinear Dirichlet forms \(\E\) on \(\lspace\), where \((X,m)\) is some topological Hausdorff measure space. We give a short recollection of these results in the next section. In the following part we define the abstract boundary of \(X\) associated with \(\E\). This enables us to investigate boundary conditions even in abstract settings where no topological boundary of \(X\) is available. We then prove a nonlinear version of the Riesz-Markov theorem, which states that any local monotone functional on the Dirichlet space \(\D\) is given by some integral. In the last part we use this representation theorem to show \cref{main theo}, the main result of this article, which shows, that, under additional assumptions, a semigroup \(S\) is dominated by the semigroup generated by \(\E\) if and only if \(S\) is generated by some specific integral perturbation of \(\E\). This yields a corollary in the spirit of \cite{AW_Laplace_what_is_in_between,CW_p_Laplace_what_is_in_between}.

\section{Preliminaries}

We cite some definitions and results from \cite{Cl23}. In the following, \((X,m)\) denotes a Hausdorff topological measure space such that \(\supp(m)=X\).

\begin{defn} \label{def:dirichlet_form}
Let \(\mathcal{E} : \lspace \rightarrow [0,\infty]\) be a convex and lower semicontinuous functional with dense effective domain. We call \(\mathcal{E}\) a \underline{Dirichlet form} if
\begin{align}\label[property]{diricheqn1}
&\mathcal{E}(u \wedge v) + \mathcal{E}(u \vee v) \leq \mathcal{E}(u)+\mathcal{E}(v)
\end{align}
and
\begin{align} 
\nonumber &\mathcal{E}\bigg( v+\frac{1}{2}\big( (u-v+\alpha)_+-(u-v-\alpha)_- \big) \bigg) \\
 & \quad \quad +\mathcal{E}\bigg( u-\frac{1}{2}\big( (u-v+\alpha)_+-(u-v-\alpha)_- \big) \bigg) \leq \mathcal{E}(u)+\mathcal{E}(v) \label[property]{diricheqn2}
\end{align}
for every \(u,v \in \lspace,\alpha>0\).

We call \(\mathcal{E}\) \underline{symmetric} if \(\E(0)=0\) and \(\E(-u)=\E(u)\) for all \(u \in \lspace\).
\end{defn}

Recall that every convex, lower semicontinuous functional \(\E\) generates a semigroup of contractions on \(\lspace\), see \cite[Th\'eor\`emes III.3.1, III.3.2]{Brezis_subgradients} (more precisely, the negative subgradient of \(\E\) generates the semigroup), and \(\E\) is a Dirichlet form if, and only if, the associated semigroup is order preserving and \(L^\infty\)-contractive. In fact, \(S\) is order preserving if and only if property \eqref{diricheqn1} holds \cite[Corollaire 2.2]{Barthelemy_invariant_convex_sets}, \cite[Theorem 3.8]{CG_Nonlinear_Dirichlet_Forms}, and it is \(L^\infty\)-contractive if and only if property \eqref{diricheqn2} holds \cite[Corollary 3.9]{CG_Nonlinear_Dirichlet_Forms}. 

Furthermore, Barth\'elemy \cite{Barthelemy_invariant_convex_sets} showed the following result:

\begin{thm}\label{thm_characterisation_domination_and_abseqn}
Let \(\E,\mathcal{F} : \lspace \rightarrow [0,\infty]\) be two Dirichlet forms and \(T,S\) the associated semigroups. Then \(S\) is dominated by \(T\) in the sense that
 \begin{align*}
     \vert S_t f\vert \leq  T_t \vert f \vert
 \end{align*}
 for every \(f \in \lspace\) and \(t\geq 0\), if, and only if,
 \begin{align*}
 \mathcal{F}( ( \vert u \vert \wedge v ) \textup{sgn}(u)) + \mathcal{E}( \vert u \vert \vee v) \leq \mathcal{F}(u)+  \mathcal{E}(v)
 \end{align*}
 for every \(u,v \in \lspace\), \(v \geq 0\).
 \end{thm}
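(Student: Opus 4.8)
The plan is to recast domination as the invariance of a closed convex set under a product semigroup and then to apply Barth\'elemy's invariance criterion. Set $H := \lspace$, equip $H \times H$ with the product scalar product, and let $\Phi(u,v) := \F(u) + \E(v)$, which is convex and lower semicontinuous with dense effective domain $\dom\F \times \dom\E$ and whose subgradient is $\partial\Phi = \partial\F \times \partial\E$; hence the semigroup generated by $-\partial\Phi$ is $\mathcal{U}_t := (S_t, T_t)$. Consider $C := \{(u,v) \in H\times H : |u| \leq v \ \text{$m$-a.e.}\}$, a closed convex set. Then $S$ is dominated by $T$ if and only if $C$ is $\mathcal{U}$-invariant: testing invariance on $(f,|f|) \in C$ gives $(S_t f, T_t|f|) \in C$, i.e.\ $|S_t f| \leq T_t|f|$; conversely, if domination holds and $(u,v) \in C$, then $|S_t u| \leq T_t|u| \leq T_t v$, the last step because $T$ is order preserving (property~\eqref{diricheqn1} for $\E$), so $(S_t u, T_t v) \in C$.

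By the characterisations of invariant convex sets of Br\'ezis and Barth\'elemy \cite{Brezis_subgradients,Barthelemy_invariant_convex_sets}, $C$ is $\mathcal{U}$-invariant if and only if it is preserved by the resolvents, i.e.\ $J^\Phi_\lambda(C) \subseteq C$ for every $\lambda > 0$, where $J^\Phi_\lambda = (J^\F_\lambda, J^\E_\lambda)$ with $J^\F_\lambda, J^\E_\lambda$ the resolvents of $\partial\F, \partial\E$. Since $J^\E_\lambda$ is order preserving and positive, $J^\Phi_\lambda(C) \subseteq C$ for all $\lambda$ is equivalent to the resolvent domination $|J^\F_\lambda f| \leq J^\E_\lambda|f|$ for all $f \in H$ and $\lambda > 0$. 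The theorem thus reduces to the equivalence of this resolvent domination with the stated inequality, which I denote $(\star)$.

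For $(\star) \Rightarrow$ resolvent domination, fix $f, \lambda$ and put $a := J^\F_\lambda f$, $b := J^\E_\lambda|f| \geq 0$. From $\tfrac1\lambda(f - a) \in \partial\F(a)$ and $\tfrac1\lambda(|f| - b) \in \partial\E(b)$, evaluated against $\tilde a := (|a| \wedge b)\operatorname{sgn} a$ and $\tilde b := |a| \vee b$ respectively, one gets
\begin{align*}
\F(\tilde a) + \E(\tilde b) \ \geq\ \F(a) + \E(b) + \tfrac1\lambda\big( \langle f-a, \tilde a - a\rangle + \langle |f|-b, \tilde b - b\rangle \big),
\end{align*}
while $(\star)$ (applicable since $b \geq 0$) gives $\F(\tilde a) + \E(\tilde b) \leq \F(a) + \E(b)$, whence $\langle f-a, \tilde a-a\rangle + \langle |f|-b, \tilde b-b\rangle \leq 0$. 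Now the elementary scalar inequality
\begin{align*}
(\phi - \alpha)\big((|\alpha|\wedge\beta)\operatorname{sgn}\alpha - \alpha\big) + (|\phi| - \beta)\big(|\alpha|\vee\beta - \beta\big) \ \geq\ 0, \qquad \phi,\alpha \in \mathbb{R},\ \beta \geq 0,
\end{align*}
holds, with strict inequality when $|\alpha| > \beta$: since $\{|\alpha|\wedge\beta, |\alpha|\vee\beta\} = \{|\alpha|,\beta\}$, the cross terms collapse and, splitting into $\alpha \geq 0$ and $\alpha < 0$, the left-hand side becomes a manifestly nonnegative product plus $(\beta - |\alpha|)^2$. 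Applied pointwise with $\phi = f(x)$, $\alpha = a(x)$, $\beta = b(x)$, this forces the integrand above to vanish $m$-a.e., hence $|a| \leq b$ $m$-a.e., i.e.\ $|J^\F_\lambda f| \leq J^\E_\lambda|f|$.

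For the converse I would argue by regularisation and differentiation. The domination is inherited by the Yosida approximations: from $|J^\F_\mu f| \leq J^\E_\mu|f|$ and order preservation one gets $|(J^\F_\mu)^k f| \leq (J^\E_\mu)^k|f|$ for all $k$, and summing the exponential series $e^{-tA_\mu} = e^{-t/\mu}\sum_{k\geq 0}\tfrac{(t/\mu)^k}{k!}(J_\mu)^k$ yields $|e^{-tA^\F_\mu}f| \leq e^{-tA^\E_\mu}|f|$, where $A^\F_\mu = \nabla\F_\mu$ and $\F_\mu, \E_\mu$ are the Moreau--Yosida regularisations; these are Fr\'echet differentiable Dirichlet forms with $\F_\mu \uparrow \F$, $\E_\mu \uparrow \E$ as $\mu \downarrow 0$, so it suffices to prove $(\star)$ when $\F, \E$ are Fr\'echet differentiable. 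In that case, writing $J^\F_\lambda f = f - \lambda\nabla\F(f) + o(\lambda)$ and likewise for $\E$, and passing to an $m$-a.e.\ convergent subsequence $\lambda_n \downarrow 0$, the resolvent domination yields the pointwise comparison $\operatorname{sgn}(f)\,\nabla\F(f) \geq \nabla\E(|f|)$ $m$-a.e.\ on $\{f \neq 0\}$ (separating $\{f > 0\}$ and $\{f < 0\}$). One then recombines this with the convexity inequalities $\F(\tilde u) - \F(u) \leq \langle \nabla\F(\tilde u), \tilde u - u\rangle$ and $\E(\tilde v) - \E(v) \leq \langle \nabla\E(\tilde v), \tilde v - v\rangle$, the support of $\tilde u - u$ and $\tilde v - v$ being $\{|u| > v\}$, together with the defining inequalities \eqref{diricheqn1} and \eqref{diricheqn2} for $\F$ and $\E$, to obtain $(\star)$; finally one lets $\mu \downarrow 0$. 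I expect this converse to be the main obstacle: one must deal with the nonsmoothness of $|\cdot|$ on $\{f = 0\}$, with the passage through the regularisations, and with the fact that the pair $\big((|u|\wedge v)\operatorname{sgn} u, |u|\vee v\big)$ lies in the interior of $C$ rather than on a resolvent trajectory, so that a limiting argument applied directly to $\F$ and $\E$ does not close the estimate — which is precisely why the detour through the differentiable case is needed.
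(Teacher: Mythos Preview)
The paper does not give its own proof of this theorem; it is quoted from Barth\'elemy \cite{Barthelemy_invariant_convex_sets}. Your reduction of domination to invariance of $C=\{(u,v):|u|\le v\}$ under the product semigroup, and then to resolvent behaviour, is precisely Barth\'elemy's framework, and your implication $(\star)\Rightarrow$ domination is essentially right: the scalar inequality is correct and, combined with the subgradient characterisations of $a=J^\F_\lambda f$ and $b=J^\E_\lambda|f|$, forces $|a|\le b$ almost everywhere. One point you pass over: to invoke $(\star)$ with $v=b$ you need $b\ge 0$, i.e.\ that $J^\E_\lambda$ preserves the positive cone; this is not a consequence of order preservation alone and deserves justification.

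The genuine gap is in the converse. The reduction to the Fr\'echet-differentiable case via Moreau--Yosida regularisation is sound, and the first-order expansion does give the pointwise bound $\operatorname{sgn}(f)\,\nabla\F(f)\ge\nabla\E(|f|)$ on $\{f\neq0\}$ for every $f$. But the ``recombination'' step does not close. Write $\tilde u=(|u|\wedge v)\operatorname{sgn} u$ and $\tilde v=|u|\vee v$; on $\{|u|>v\}$ one has $\tilde u=v\operatorname{sgn} u$ and $\tilde v=|u|$, and convexity at $(\tilde u,\tilde v)$ gives
\[
\F(u)-\F(\tilde u)\ \ge\ \int(|u|-v)^+\operatorname{sgn}(u)\,\nabla\F(\tilde u),\qquad
\E(\tilde v)-\E(v)\ \le\ \int(|u|-v)^+\,\nabla\E(\tilde v).
\]
To conclude $(\star)$ you would need $\operatorname{sgn}(u)\,\nabla\F(\tilde u)\ge\nabla\E(\tilde v)=\nabla\E(|u|)$ on that set, whereas your gradient bound at $f=\tilde u$ yields only $\operatorname{sgn}(u)\,\nabla\F(\tilde u)\ge\nabla\E(|\tilde u|)=\nabla\E(v)$; applying it at $f=u$ pairs with convexity in the wrong direction. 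Integrating along the segment from $(u,v)$ to $(\tilde u,\tilde v)$ fails for the same reason, since $|u_t|\neq v_t$ along that path, and the Dirichlet-form axioms \eqref{diricheqn1}--\eqref{diricheqn2} do not repair the mismatch. Barth\'elemy's argument for this direction does not proceed via a first-order gradient comparison; it goes through the invariance criterion for $C$ expressed with the \emph{orthogonal} projection onto $C$ (which is not the retraction $(u,v)\mapsto(\tilde u,\tilde v)$) and then links that projection condition to $(\star)$ by a separate lattice computation. That link is the substance of the proof, and it is absent from your sketch.
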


 As a corollary this yields
 \begin{align*}
     \E(\vert u \vert) \leq \E(u)
 \end{align*}
 if \(\E\) is a symmetric Dirichlet form.

Set
\begin{align*}
    \mathcal{E}_1(u)= \Vert u \Vert_\lspace^2 + \mathcal{E}(u)
\end{align*}
and
\begin{align*}
\mathfrak{D}=\{ u\in \lspace \setdelim \exists \lambda>0: \mathcal{E}_1(\lambda u)< \infty \}
\end{align*}
together with
\begin{align*}
\Vert u \Vert_\mathfrak{D} = \inf\Big\{ \lambda>0 : \mathcal{E}_1\Big(\frac{u}{\lambda} \Big)\leq 1 \Big\}.
\end{align*}

\begin{thm}\label{Thm:D_Riesz_Subspace_Norm}
Let \(\mathcal{E}\) be a symmetric Dirichlet form. Then \((\D, \Vert \cdot \Vert_\mathfrak{D})\) is a Banach space and a lattice under the pointwise order such that
\begin{align*}
    \Vert f \wedge g\Vert_\D \leq \Vert f \Vert_\D + \Vert g \Vert_\D
\end{align*}
and 
\begin{align*}
    \Vert -c \vee f \wedge c\Vert_\D \leq \Vert f \Vert_\D
\end{align*}
for all \(f,g \in \D\) and \(c\geq 0\). \label{thm:continuous_lattice_operations}
If, in addition, \(\dom \E = \D\), then the lattice operations \(\wedge,\vee\) are continuous and \(-c \vee u \wedge c\) converges to \(u\) for \(c \rightarrow \infty\) and to \(0\) for \(c \rightarrow 0\).
\end{thm}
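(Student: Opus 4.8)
The plan is to first establish that $\|\cdot\|_{\D}$ is a genuine norm by recognising it as the Minkowski functional (gauge) of the convex set $C = \{u \in \lspace : \E_1(u) \leq 1\}$. Since $\E$ is convex, lower semicontinuous, and symmetric with $\E(0)=0$, the set $C$ is convex, balanced (symmetric under $u \mapsto -u$), closed in $\lspace$, and absorbing over $\D$ by the very definition of $\D$; moreover $C$ contains no line (because the $\|u\|_{\lspace}^2$ term in $\E_1$ forces boundedness of $C$ in the $L^2$-direction). Standard convex-analysis facts then give that the gauge $\|\cdot\|_{\D}$ is a norm on $\D$. For completeness of $(\D, \|\cdot\|_{\D})$: a $\|\cdot\|_{\D}$-Cauchy sequence is in particular $\lspace$-Cauchy (since $\|u\|_{\lspace} \le \|u\|_{\D}$, which follows from $\E_1 \ge \|\cdot\|_{\lspace}^2$), hence converges in $\lspace$ to some $u$; lower semicontinuity of $\E_1$ together with the Cauchy estimate upgrades this to $\|u_n - u\|_{\D} \to 0$ and $u \in \D$. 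This is the routine part.

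Next I would prove the two lattice inequalities. For $\|f \wedge g\|_{\D} \le \|f\|_{\D} + \|g\|_{\D}$: set $\lambda = \|f\|_{\D}$, $\mu = \|g\|_{\D}$ (the case where either is zero is handled separately, forcing $f=0$ or $g=0$), so that $\E_1(f/\lambda) \le 1$ and $\E_1(g/\mu) \le 1$. Write $s = \lambda + \mu$. The claim is $\E_1\big((f \wedge g)/s\big) \le 1$. Here I would apply property \eqref{diricheqn1}, scaling-homogeneously: note $(f \wedge g)/s = (f/s) \wedge (g/s)$, and combine with $(f/s)\vee(g/s)$; using \eqref{diricheqn1},
\begin{align*}
\E\big((f\wedge g)/s\big) + \E\big((f\vee g)/s\big) \le \E(f/s) + \E(g/s).
\end{align*}
Then convexity of $\E$ (together with $\E(0)=0$) gives $\E(f/s) = \E\big(\tfrac{\lambda}{s}(f/\lambda)\big) \le \tfrac{\lambda}{s}\E(f/\lambda) \le \tfrac{\lambda}{s}$, similarly for $g$, and the same convexity bound controls the $\|\cdot\|_{\lspace}^2$ part. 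Dropping the nonnegative term $\E\big((f\vee g)/s\big)$ and doing the analogous estimate for the $L^2$-norms (which is just the triangle inequality after scaling) yields $\E_1\big((f\wedge g)/s\big) \le 1$, hence the inequality. For $\|-c \vee f \wedge c\|_{\D} \le \|f\|_{\D}$, I would instead invoke property \eqref{diricheqn2} in the form that says truncation decreases $\E$ (take $v = 0$, $\alpha = c$ in \eqref{diricheqn2}, or use the stated $L^\infty$-contractivity together with $\E(0)=0$ to get $\E(-c \vee u \wedge c) \le \E(u)$), combined with the obvious pointwise bound $|-c \vee u \wedge c| \le |u|$ for the $L^2$-term; applying this to $u = f/\|f\|_{\D}$ and using homogeneity of the truncation-by-$c\|f\|_{\D}$ operation finishes it.

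Finally, under the extra hypothesis $\dom \E = \D$, I would prove continuity of $\wedge, \vee$ and the two truncation limits. For the limits: given $u \in \D$, the family $u_c := -c \vee u \wedge c$ satisfies $u_c \to u$ in $\lspace$ as $c \to \infty$ and $u_c \to 0$ as $c \to 0$ by dominated convergence, and by the second lattice inequality $\|u_c\|_{\D} \le \|u\|_{\D}$ is bounded; then lower semicontinuity of $\E_1$ plus the $\D = \dom\E$ assumption (which rules out the pathology that the $\D$-limit sits outside $\dom\E$) lets me upgrade $\lspace$-convergence to $\D$-convergence along the monotone family — the monotonicity of $c \mapsto \E(u_c)$ is what makes this work. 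For continuity of the lattice operations: since $u \wedge v = u - (u-v)_+$ and $(\cdot)_+ = 0 \vee (\cdot)$, it suffices to show $u \mapsto u^+$ is continuous on $(\D,\|\cdot\|_{\D})$; this I would get from the first lattice inequality applied cleverly (e.g.\ $\|u^+ - v^+\|_{\D} \le \|(u-v)^+\|_{\D} + \text{lower-order}$, or a direct estimate $u^+ \wedge$-combined with $v^+$), again using $\dom\E = \D$ to keep everything inside the space.

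\textbf{Main obstacle.} The genuinely delicate point is the passage from $\lspace$-convergence (or pointwise/monotone convergence) to $\|\cdot\|_{\D}$-convergence in the last paragraph: lower semicontinuity of $\E_1$ only gives $\E_1(\lim) \le \liminf \E_1$, which by itself does not force $\|u_n - u\|_{\D} \to 0$. One needs to exploit the convexity/lattice structure and crucially the hypothesis $\dom \E = \D$ — which guarantees that the norm topology of $\D$ is "compatible" with $\E$ in the sense that bounded-plus-$L^2$-convergent sequences are $\D$-convergent — to close this gap. Getting this compatibility argument right (likely via a Mazur-type argument on convex combinations, or via the reflexivity/uniform-convexity-flavoured consequences of $\E_1$ being a sum of a strictly convex $L^2$-term and a convex functional) is where the real work lies; the algebraic lattice inequalities \eqref{diricheqn1}–\eqref{diricheqn2} are comparatively mechanical once the homogenisation-by-scaling bookkeeping is set up.
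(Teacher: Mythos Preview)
The paper does not contain a proof of this theorem: it appears in the preliminaries section, and the section ends with the blanket sentence ``All proofs can be found in \cite{Cl23}.'' So there is no in-paper argument to compare your sketch against; the result is imported wholesale from the second author's earlier article.

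On the substance of your sketch: the Banach-space part (Luxemburg gauge of $\{\E_1\le 1\}$, completeness via $L^2$-limit plus lower semicontinuity) and the first lattice inequality are correct and standard; for the latter one should also record the pointwise identity $\|f\wedge g\|_{L^2}^2+\|f\vee g\|_{L^2}^2=\|f\|_{L^2}^2+\|g\|_{L^2}^2$ so that the $\E$-inequality \eqref{diricheqn1} and the $L^2$-part combine cleanly into an $\E_1$-inequality. For the truncation inequality, your first suggestion---substituting $v=0$, $\alpha=c$ directly into \eqref{diricheqn2}---does \emph{not} yield $-c\vee u\wedge c$: working out the cases one finds $\tfrac12\big((u+c)_+-(u-c)_-\big)$ equals $u$ on $\{|u|\le c\}$ but $(u\pm c)/2$ rather than $\pm c$ outside. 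Your fallback (``use the stated $L^\infty$-contractivity'') is what actually works: $L^\infty$-contractivity of the semigroup means the closed convex set $\{\,|u|\le c\,\}$ is invariant, and by the Barth\'elemy/Br\'ezis characterisation this is equivalent to $\E(P_{\{|u|\le c\}}u)\le\E(u)$, where the metric projection is exactly $-c\vee u\wedge c$.

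Your self-assessment of the final part is accurate: upgrading $L^2$-convergence plus $\D$-boundedness to $\|\cdot\|_\D$-convergence is the genuine content, and lower semicontinuity alone does not close it. The mechanism you are groping for is a modular-space one: the hypothesis $\dom\E=\D$ says the modular $\E_1$ is finite on every ray through the origin in $\D$, which is the analogue of the $\Delta_2$-condition in Orlicz-space theory and is precisely what makes modular convergence and norm convergence coincide. A Mazur or uniform-convexity argument is unlikely to be the right tool here; the argument in \cite{Cl23} presumably exploits this modular/$\Delta_2$ structure rather than any reflexivity.
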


In the following \(\D_b= \D \cap L^\infty(X,m)\) denotes the space of bounded functions in \(\D\).

\begin{defn}
Let \(A \subseteq X \). We define the set
\begin{align*}
    \mathcal{L}_A=\{u \in \lspace \setdelim u\geq 1 \text{ on } U, A \subseteq U,\; U \text{ open} \},
\end{align*}
and the \underline{norm-capacity} by
\[
\normcap(A)=\inf\{\Vert u \Vert_\D \setdelim u \in \mathcal{L}_A\} .
\]
\end{defn}

\begin{lem}
 Let \(A,B \subseteq X\). Then \(\normcap(A \cup B) \leq \normcap(A)+\normcap(B)\), and if \(A\subseteq B\), then \(\normcap(A) \leq \normcap(B)\).
\end{lem}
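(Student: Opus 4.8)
The plan is to prove monotonicity directly from the definition and then to obtain subadditivity by gluing together near-optimal test functions using the lattice structure of $\D$ from \cref{Thm:D_Riesz_Subspace_Norm}.

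For monotonicity I would argue that $A \subseteq B$ forces $\mathcal{L}_B \subseteq \mathcal{L}_A$: any $u \in \mathcal{L}_B$ satisfies $u \geq 1$ on some open $U$ with $A \subseteq B \subseteq U$, hence already lies in $\mathcal{L}_A$. Since $\normcap$ is an infimum of $\Vert \cdot \Vert_\D$ over these sets, enlarging the admissible set can only lower the infimum, giving $\normcap(A) \leq \normcap(B)$.

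For subadditivity I would first dispose of the trivial case $\normcap(A) = \infty$ or $\normcap(B) = \infty$, and otherwise fix arbitrary $u \in \mathcal{L}_A$, $v \in \mathcal{L}_B$, together with open sets $U \supseteq A$, $V \supseteq B$ on which $u \geq 1$, resp. $v \geq 1$. Then $W := U \cup V$ is open, contains $A \cup B$, and $u \vee v \geq 1$ on $W$; moreover $u \vee v \in \lspace$ because $\lvert u \vee v \rvert \leq \lvert u \rvert + \lvert v \rvert$. Hence $u \vee v \in \mathcal{L}_{A \cup B}$. To control its $\D$-norm I would rewrite $u \vee v = -\big((-u) \wedge (-v)\big)$ and combine the $\wedge$-inequality of \cref{Thm:D_Riesz_Subspace_Norm} with the identity $\Vert -w \Vert_\D = \Vert w \Vert_\D$, which follows from $\E_1(-w) = \E_1(w)$ by symmetry of $\E$:
\[
\Vert u \vee v \Vert_\D = \Vert (-u) \wedge (-v) \Vert_\D \leq \Vert -u \Vert_\D + \Vert -v \Vert_\D = \Vert u \Vert_\D + \Vert v \Vert_\D .
\]
Thus $\normcap(A \cup B) \leq \Vert u \Vert_\D + \Vert v \Vert_\D$, and passing to the infimum over $u$ and then over $v$ gives $\normcap(A \cup B) \leq \normcap(A) + \normcap(B)$.

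I do not anticipate a genuine obstacle; the one technical wrinkle is that \cref{Thm:D_Riesz_Subspace_Norm} supplies the subadditivity bound for $\wedge$, whereas the union of neighbourhoods naturally calls for $\vee$, so one has to route through the symmetry of $\E$. An equivalent workaround is to first replace each test function by its positive part $u^+ = -\big(0 \wedge (-u)\big)$ (with $\Vert u^+ \Vert_\D \leq \Vert u \Vert_\D$ by the same $\wedge$-bound and symmetry, since $\Vert 0 \Vert_\D = 0$) and then use the triangle inequality in the Banach space $\D$, noting that $u^+ + v^+ \in \mathcal{L}_{A \cup B}$ whenever $u \in \mathcal{L}_A$ and $v \in \mathcal{L}_B$.
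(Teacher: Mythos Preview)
Your argument is correct. The paper does not actually prove this lemma in the text; it merely records the statement and defers all proofs of this preliminary block to \cite{Cl23}. So there is no in-paper proof to compare against, but your approach---monotonicity via $\mathcal{L}_B \subseteq \mathcal{L}_A$, and subadditivity via $u \vee v \in \mathcal{L}_{A\cup B}$ together with the estimate $\Vert u \vee v\Vert_\D \leq \Vert u\Vert_\D + \Vert v\Vert_\D$ obtained from \cref{Thm:D_Riesz_Subspace_Norm} through the symmetry identity $u \vee v = -((-u)\wedge(-v))$---is the standard one and matches what one would expect to find in \cite{Cl23}. The alternative route via $u^+ + v^+$ that you sketch at the end is equally valid.
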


\begin{lem}
Let \(A_n ,A \subseteq X\) such that \(\bigcup_{n=1}^\infty A_n =A\). Then \(\sum_{n =1}^\infty \normcap(A_n)\geq \normcap(A)\).
\end{lem}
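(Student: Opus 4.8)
The plan is to build a single admissible competitor for \(A\) by taking the supremum of near‑optimal competitors for the individual sets \(A_n\). We may assume \(M := \sum_{n=1}^\infty \normcap(A_n) < \infty\), for otherwise the claim is trivial; in particular each \(\normcap(A_n)\) is finite. Fix \(\varepsilon>0\) and, for each \(n\), choose \(u_n \in \mathcal{L}_{A_n}\) with \(\|u_n\|_\D \le \normcap(A_n)+\varepsilon 2^{-n}\) together with an open set \(U_n \supseteq A_n\) on which \(u_n \ge 1\). Because \(\E\), hence \(\E_1\), is symmetric, \(\|{-f}\|_\D=\|f\|_\D\); applying the inequality \(\|f\wedge g\|_\D \le \|f\|_\D+\|g\|_\D\) from \cref{Thm:D_Riesz_Subspace_Norm} to \(-f,-g\) gives \(\|f\vee g\|_\D \le \|f\|_\D+\|g\|_\D\). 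Iterating, the functions \(w_N:=u_1\vee\dots\vee u_N\) lie in \(\D\) (which is a lattice by \cref{Thm:D_Riesz_Subspace_Norm}) and satisfy
\[
\|w_N\|_\D \;\le\; \sum_{n=1}^N \|u_n\|_\D \;\le\; M+\varepsilon \;=:\; C
\qquad\text{for all } N,
\]
while \(w_N \ge 1\) on the open set \(U_1\cup\dots\cup U_N\).

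Now I would pass to the limit. Since \(\E_1 \ge \|\cdot\|_{\lspace}^2\), we have the continuous embedding \(\|v\|_{\lspace}\le\|v\|_\D\), so \((w_N)_N\) is bounded in \(\lspace\); being pointwise nondecreasing, it converges \(m\)-a.e. and, by monotone and dominated convergence, in \(\lspace\), to some \(w\in\lspace\) satisfying \(w\ge 1\) \(m\)-a.e. on the open set \(U:=\bigcup_{n\ge 1} U_n \supseteq A\). Using convexity of \(\E_1\) and \(\E_1(0)=0\), one checks \(\{v\in\lspace : \|v\|_\D\le C\}=\{v\in\lspace : \E_1(v/C)\le 1\}\), and this set is closed in \(\lspace\) because \(\E_1\) is lower semicontinuous. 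As each \(w_N\) lies in it and \(w_N\to w\) in \(\lspace\), also \(\|w\|_\D\le C\). Hence \(w\in\mathcal{L}_A\), so
\[
\normcap(A)\;\le\;\|w\|_\D\;\le\; M+\varepsilon,
\]
and letting \(\varepsilon\downarrow 0\) yields \(\normcap(A)\le\sum_{n=1}^\infty\normcap(A_n)\).

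I expect the limiting step of the second paragraph to be the only real obstacle: one must guarantee that the pointwise supremum \(\sup_n u_n\) (equivalently, the limit of the increasing sequence \(w_N\)) is again an element of \(\lspace\) and that its \(\D\)-norm does not blow up in the passage to the limit. This forces one to combine the uniform bound \(\|w_N\|_\D\le C\), the embedding \(\D\hookrightarrow\lspace\), and the lower semicontinuity of \(\E_1\), through the identification of the closed \(\D\)-ball with a sublevel set of \(\E_1\). The remaining ingredients — reducing to finite unions, the \(\varepsilon 2^{-n}\) bookkeeping, and the transfer from the meet inequality to the join inequality via symmetry of \(\E\) — are routine.
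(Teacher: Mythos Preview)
The paper does not actually prove this lemma; it states it and defers all proofs in that section to \cite{Cl23}. So there is no ``paper's own proof'' to compare against directly. That said, your argument is the standard route for countable subadditivity of a capacity defined through a Luxemburg-type gauge, and it is correct: pick near-optimal competitors $u_n$, form the finite suprema $w_N=u_1\vee\dots\vee u_N$, control $\|w_N\|_\D$ via the join inequality (obtained from the meet inequality of \cref{Thm:D_Riesz_Subspace_Norm} and symmetry), and pass to the limit using that the closed $\D$-ball equals the $\E_1$-sublevel set $\{\E_1(\cdot/C)\le 1\}$, which is $L^2$-closed by lower semicontinuity. The monotone limit $w=\sup_N w_N$ lies in $L^2$ (Fatou for $(w_N^+)^2$ together with $w_N^-\le w_1^-$), and then $0\le w-w_N\le w-w_1\in L^2$ justifies dominated convergence for $\|w_N-w\|_{L^2}\to 0$; finally $w\ge 1$ a.e.\ on the open set $\bigcup_n U_n\supseteq A$. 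This is precisely the argument one would expect in \cite{Cl23}, so your proof is both correct and aligned with the intended approach.
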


\begin{lem}
Let \(K_n \subseteq X\) be compact subsets such that \(K_n \downarrow K \). Then
\begin{align*}
    \normcap(K)= \inf_{n \in \mathbb{N}} \normcap(K_n).
\end{align*}
\end{lem}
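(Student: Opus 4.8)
The plan is to prove the two inequalities separately. The inequality $\normcap(K) \le \inf_{n \in \mathbb{N}} \normcap(K_n)$ is immediate from the monotonicity of the norm-capacity established above, since $K \subseteq K_n$ for every $n$. In particular, if $\normcap(K) = +\infty$ there is nothing left to show, so from now on I would assume $\normcap(K) < \infty$.

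For the reverse inequality I would fix $\varepsilon > 0$ and choose, using the definition of $\normcap(K)$ as an infimum, a function $u \in \mathcal{L}_K$ with $\Vert u \Vert_\D \le \normcap(K) + \varepsilon$. By definition of $\mathcal{L}_K$ there is an open set $U$ with $K \subseteq U$ and $u \ge 1$ on $U$. The key step is then the topological observation that $K_n \subseteq U$ for some $n$: the sets $K_n \setminus U = K_n \cap (X \setminus U)$ are compact, being closed subsets of the compact set $K_1$; they decrease in $n$; and their intersection equals $K \setminus U = \emptyset$. Hence, by the finite intersection property, $K_n \setminus U = \emptyset$ for some $n$, i.e.\ $K_n \subseteq U$. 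For such an $n$ we have $u \in \mathcal{L}_{K_n}$, whence $\normcap(K_n) \le \Vert u \Vert_\D \le \normcap(K) + \varepsilon$, and therefore $\inf_{n \in \mathbb{N}} \normcap(K_n) \le \normcap(K) + \varepsilon$. Letting $\varepsilon \to 0$ yields $\inf_{n \in \mathbb{N}} \normcap(K_n) \le \normcap(K)$, and combining the two inequalities finishes the proof.

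The argument is entirely elementary, and I expect the only point requiring a little care to be the passage from ``$U$ is an open neighbourhood of $K$'' to ``$U$ contains some $K_n$'': this uses compactness of the $K_n$ together with the Hausdorff property of $X$ (so that $X \setminus U$ is closed and $K_n \setminus U$ is compact). This is precisely the place where it matters that $\mathcal{L}_A$ is defined via open neighbourhoods of $A$ rather than via $A$ itself; beyond monotonicity of $\normcap$, no further structural properties of $\Vert \cdot \Vert_\D$ are needed.
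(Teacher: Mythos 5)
Your proof is correct, and it is the standard argument: monotonicity gives $\normcap(K)\leq\inf_n\normcap(K_n)$, while for the converse the compactness argument showing that any open neighbourhood $U$ of $K$ already contains some $K_n$ (using that $K_n\setminus U$ is a decreasing sequence of compacta with empty intersection) is exactly the right key step, and it is precisely here that the definition of $\mathcal{L}_A$ via open neighbourhoods is exploited. The paper itself defers this proof to \cite{Cl23}, where the argument proceeds along the same lines, so there is nothing to add.
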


\begin{defn}
    We call a set \(A\subseteq X\) \underline{polar}, if \( \normcap(A)=0\) and some property depending on \(x\in X\) holds \underline{quasi everywhere} (q.e.), if there is a polar set \(A\) such that the property holds everywhere on \(X\setminus A\).
 We say \(f: X  \rightarrow Y\) for some topological space \(Y\) is \underline{quasicontinuous}, if for every \(\epsilon > 0 \) there is an open set \(O\subseteq X \) such that \(\normcap(O) \leq \epsilon \) and \(f|_{O^c}\) is continuous.

Additionally we call \(f \in \lspace\) \underline{quasicontinuous}, if there is a representative which is quasicontinuous. Whenever this is the case, we denote this representative again by \(f\).
\end{defn}

\begin{lem}\label{lem:polar_implies_nullset}
 Let \( A \subseteq X\) be measurable and polar. Then \(m(A)=0\).
\end{lem}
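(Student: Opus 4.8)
The plan is to unwind the definition of polarity and compare the $\mathfrak{D}$-norm with the $L^2$-norm. Since $A$ is polar we have $\normcap(A)=\inf\{\Vert u\Vert_{\D}\setdelim u\in\mathcal{L}_A\}=0$, so I can pick a sequence $(u_n)_{n\in\mathbb{N}}\subseteq\mathcal{L}_A$ with $\Vert u_n\Vert_{\D}\to 0$ (in particular each $u_n\in\D$). By definition of $\mathcal{L}_A$, for every $n$ there is an open set $U_n$ with $A\subseteq U_n$ and $u_n\geq 1$ $m$-a.e.\ on $U_n$; hence $u_n\geq 1$ $m$-a.e.\ on $A$, and therefore also $u_n^2\geq 1$ $m$-a.e.\ on $A$.

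The second ingredient is the elementary estimate $\Vert u\Vert_{\lspace}\leq\Vert u\Vert_{\D}$ for all $u\in\D$. Indeed, for any $\lambda>\Vert u\Vert_{\D}$ one has $\mathcal{E}_1(u/\lambda)\leq 1$, and since $\mathcal{E}\geq 0$ this forces $\Vert u/\lambda\Vert_{\lspace}^2\leq\mathcal{E}_1(u/\lambda)\leq 1$, i.e.\ $\Vert u\Vert_{\lspace}\leq\lambda$; letting $\lambda\downarrow\Vert u\Vert_{\D}$ gives the claim. Applied to the $u_n$ this yields $\Vert u_n\Vert_{\lspace}\leq\Vert u_n\Vert_{\D}\to 0$, so $u_n\to 0$ in $\lspace$.

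Combining the two observations and using that $A$ is measurable,
\[
m(A)=\int_A 1\,\mathrm{d}m\leq\int_A u_n^2\,\mathrm{d}m\leq\Vert u_n\Vert_{\lspace}^2\leq\Vert u_n\Vert_{\D}^2\longrightarrow 0,
\]
so $m(A)=0$. I do not expect a genuine obstacle here; the only points that need a word of care are (i) reading ``$u_n\geq 1$ on $U_n$'' as an $m$-a.e.\ statement for the $L^2$-class $u_n$, and (ii) noticing that one should \emph{not} assume $\mathbf 1_A\in\lspace$ in advance — the displayed chain of inequalities shows $m(A)<\infty$ as a byproduct, so the estimate is legitimate even a priori. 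Symmetry of $\mathcal{E}$ (hence $\mathcal{E}(0)=0$) is used only implicitly, to guarantee that $\Vert\cdot\Vert_{\D}$ is an honest norm on $\D$; it is not otherwise needed.
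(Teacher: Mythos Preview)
Your argument is correct. The paper itself does not prove this lemma; it simply records it among the preliminaries and refers to \cite{Cl23} for proofs, so there is no in-paper argument to compare against. Your approach is the natural one: exploit the built-in $L^2$-part of $\mathcal{E}_1$ to bound $\Vert\cdot\Vert_{\lspace}$ by the Luxemburg norm $\Vert\cdot\Vert_{\D}$, and then use that any $u\in\mathcal{L}_A$ dominates $1$ on $A$. The two caveats you flag are exactly the right ones, and both are handled: (i) inequalities for $L^2$-classes are $m$-a.e.\ statements, and (ii) the chain $m(A)\leq\int_A u_n^2\leq\Vert u_n\Vert_{\lspace}^2$ is valid in $[0,\infty]$ regardless of whether $m(A)$ is finite, so finiteness of $m(A)$ follows a posteriori.

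One minor remark: strictly speaking $\mathcal{L}_A$ consists of $L^2$-functions, not necessarily elements of $\D$, but since $\Vert u\Vert_{\D}=\infty$ for $u\notin\D$ (the defining infimum is over the empty set), the sequence you extract with $\Vert u_n\Vert_{\D}\to 0$ automatically lies in $\D$, as you implicitly use.
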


\begin{thm} \label{aeqe}
Let \(U \subseteq X\) be an open subset and \(f:X \rightarrow \mathbb{R} \) quasicontinuous. Then
\begin{align*}
f \geq 0 \text{ a.e. on }U \Longleftrightarrow f \geq 0 \text{ q.e. on }U.
\end{align*}
\end{thm}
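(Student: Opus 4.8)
The plan is to prove the two implications separately, the direction "$f\ge 0$ q.e.\ on $U$ $\Rightarrow$ $f\ge 0$ a.e.\ on $U$" being essentially immediate and the converse carrying the content. For the easy direction: if $f\ge 0$ q.e.\ on $U$, then $N:=\{x\in U:f(x)<0\}$ is contained in a polar set, so $\normcap(N)=0$ by monotonicity of $\normcap$; since $f$ is quasicontinuous, hence measurable (it is continuous off open sets of arbitrarily small capacity, hence, by \cref{lem:polar_implies_nullset}, off $m$-null sets), the set $N$ is measurable, and \cref{lem:polar_implies_nullset} gives $m(N)=0$, i.e.\ $f\ge 0$ a.e.\ on $U$.

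For the direction "$f\ge 0$ a.e.\ on $U$ $\Rightarrow$ $f\ge 0$ q.e.\ on $U$", I would start by writing $N:=\{x\in U:f(x)<0\}=\bigcup_{k\ge1}F_{1/k}$ with $F_\alpha:=\{x\in U:f(x)<-\alpha\}$, so that by countable subadditivity of $\normcap$ it suffices to show $\normcap(F_\alpha)=0$ for each fixed $\alpha>0$. Fix such an $\alpha$ and let $\delta>0$. By quasicontinuity, choose an open set $O$ with $\normcap(O)\le\delta$ and $f|_{O^c}$ continuous, and choose $u\in\mathcal L_O$ with $\|u\|_\D\le 2\delta$; let $V_0\supseteq O$ be open with $u\ge 1$ a.e.\ on $V_0$. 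The crucial claim is that this same $u$, without any modification, already belongs to $\mathcal L_{F_\alpha}$.

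To see this, note that since $U$ is open and $f|_{O^c}$ is continuous, the set $\{x\in U\cap O^c:f(x)<-\alpha\}$ is relatively open in $O^c$, hence equals $O^c\cap V_1$ for some open $V_1\subseteq U$; moreover $V_1\cap O^c\subseteq N$ is a Borel set of $m$-measure zero by hypothesis. Put $G:=V_0\cup V_1$. Then $F_\alpha\subseteq G$, because a point of $F_\alpha$ either lies in $O\subseteq V_0$ or lies in $U\cap O^c$ with $f<-\alpha$, hence in $V_1\cap O^c$. And $u\ge 1$ a.e.\ on $G$, because $G\setminus V_0\subseteq V_1\setminus O=V_1\cap O^c$ is null, so the measurable set $\{x\in G:u(x)<1\}$ lies in the union of the null sets $\{x\in V_0:u(x)<1\}$ and $G\setminus V_0$. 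Hence $u\in\mathcal L_{F_\alpha}$, so $\normcap(F_\alpha)\le\|u\|_\D\le 2\delta$; letting $\delta\downarrow 0$ gives $\normcap(F_\alpha)=0$, and summing over $k$ finishes the proof.

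I expect the only real obstacle to be identifying the correct test function. The naive attempts — building a function tailored to $F_\alpha$ out of the continuous piece $f|_{O^c}$, or trying to bound $\normcap(G)$ using that $G$ has small Lebesgue measure — either run into the jump of $f$ across $\partial O$ or fail outright, since small Lebesgue measure does not control $\normcap$. The resolution is to recycle the capacitary test function for $O$ and to use that membership in $\mathcal L_A$ only requires the inequality $u\ge 1$ to hold almost everywhere on an open neighbourhood of $A$, which makes the null "overhang" $V_1\setminus V_0$ invisible.
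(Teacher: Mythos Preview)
Your proof is correct. The paper itself does not prove this statement; it is quoted from the second author's earlier article (the reference \cite{Cl23}) along with the other preliminary results in Section~2, under the blanket remark ``All proofs can be found in \cite{Cl23}'', so a direct comparison with the paper's own argument is not possible.

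That said, your approach---recycle the capacitary test function $u\in\mathcal L_O$ for the exceptional open set $O$, and observe that the relatively open $m$-null set $V_1\cap O^c$ can be absorbed into the neighbourhood for free because membership in $\mathcal L_A$ only requires $u\ge 1$ \emph{almost everywhere} on an open neighbourhood of $A$---is exactly the standard argument for this classical lemma in capacity theory (compare, e.g., the analogous statement in the bilinear Dirichlet form literature), and is almost certainly what the cited reference does as well. One minor simplification: the decomposition $N=\bigcup_k F_{1/k}$ is not actually needed, since $\{f<0\}\cap U\cap O^c$ is already relatively open in $O^c$ by continuity of $f|_{O^c}$; the identical argument runs with $V_1$ chosen directly for $\{f<0\}$ in place of $\{f<-\alpha\}$.
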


\begin{corollary}\label{thm:unique_qc_representatives}
Let \(f_1 , f_2:X \rightarrow \mathbb{R}\) be two quasicontinuous representatives of some \(f \in \lspace\). Then \(f_1=f_2\) quasi everywhere on \(X\).
\end{corollary}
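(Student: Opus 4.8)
The plan is to reduce the statement to \cref{aeqe} applied to the function $g := f_1 - f_2$, used in both directions $g \geq 0$ and $-g \geq 0$.

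First I would check that $g$ is a quasicontinuous (everywhere defined) function $X \to \mathbb{R}$. Fix $\epsilon > 0$. By quasicontinuity of $f_1$ and $f_2$ choose open sets $O_1, O_2 \subseteq X$ with $\normcap(O_i) \leq \epsilon/2$ such that $f_i|_{O_i^c}$ is continuous for $i = 1, 2$. Put $O := O_1 \cup O_2$; then $O$ is open, $\normcap(O) \leq \normcap(O_1) + \normcap(O_2) \leq \epsilon$ by subadditivity of the norm-capacity, and on $O^c = O_1^c \cap O_2^c$ both $f_1$ and $f_2$ restrict to continuous functions, hence so does $g = f_1 - f_2$. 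As $\epsilon > 0$ was arbitrary, $g$ is quasicontinuous.

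Next, since $f_1$ and $f_2$ are two representatives of the same class $f \in \lspace$, they agree $m$-a.e. on $X$, so $g = 0$ $m$-a.e. on $X$. In particular $g \geq 0$ $m$-a.e. on the open set $U := X$ and likewise $-g \geq 0$ $m$-a.e.\ on $X$. Applying \cref{aeqe} (with $U = X$) to the quasicontinuous functions $g$ and $-g$ respectively yields $g \geq 0$ q.e.\ on $X$ and $-g \geq 0$ q.e.\ on $X$; that is, there are polar sets $A_1, A_2$ with $g \geq 0$ on $X \setminus A_1$ and $g \leq 0$ on $X \setminus A_2$. Then $g = 0$, i.e.\ $f_1 = f_2$, on $X \setminus (A_1 \cup A_2)$, and $\normcap(A_1 \cup A_2) \leq \normcap(A_1) + \normcap(A_2) = 0$, so $A_1 \cup A_2$ is polar. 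Hence $f_1 = f_2$ quasi everywhere on $X$.

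I do not anticipate a genuine obstacle here: the argument is a direct application of \cref{aeqe} once one knows the difference of two quasicontinuous functions is quasicontinuous, and that fact is immediate from subadditivity of $\normcap$ on open sets. The only point requiring a little care is that \cref{aeqe} is phrased for a genuine (everywhere defined) quasicontinuous function rather than for an $\lspace$-class, so it is important to work with the fixed representatives $f_1, f_2$ and their pointwise difference $g$ throughout, rather than passing back to $\lspace$.
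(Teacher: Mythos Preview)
Your argument is correct and is exactly the natural deduction from \cref{aeqe} that justifies the ``corollary'' label: apply \cref{aeqe} to the quasicontinuous difference $g=f_1-f_2$ and to $-g$ on the open set $U=X$, then use subadditivity of $\normcap$ to combine the two polar exceptional sets. The paper does not spell out a proof here (it defers all proofs in this block to \cite{Cl23}), but this is the intended and standard route.
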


\begin{thm}
Let \(f \in \overbar{\mathfrak{D} \cap C(X)}\). Then \(f\) is quasicontinuous on \(X\).
\end{thm}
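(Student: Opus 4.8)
The plan is to run a Borel–Cantelli type argument based on a Chebyshev inequality for the norm-capacity. Unwinding the statement, \(f\in\overbar{\mathfrak{D}\cap C(X)}\) means there are \(f_n\in\D\cap C(X)\) with \(\Vert f_n-f\Vert_\D\to 0\), and after passing to a subsequence we may assume \(\Vert f_{n+1}-f_n\Vert_\D\le 4^{-n}\).

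\textbf{Step 1 (capacity Chebyshev inequality for continuous functions).} For \(u\in\D\cap C(X)\) and \(\lambda>0\) the superlevel set \(A_\lambda=\{x\in X\setdelim |u(x)|>\lambda\}\) is open, because \(u\) is continuous, and \(\lambda^{-1}|u|\ge 1\) on \(A_\lambda\), so \(\lambda^{-1}|u|\in\mathcal{L}_{A_\lambda}\). The corollary to \cref{thm_characterisation_domination_and_abseqn} gives \(\E(|u|)\le\E(u)\), hence \(\E_1(|u|/\mu)\le\E_1(u/\mu)\) for every \(\mu>0\), and therefore \(\Vert|u|\Vert_\D\le\Vert u\Vert_\D\). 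Consequently
\[
\normcap\big(\{|u|>\lambda\}\big)\le\big\Vert \lambda^{-1}|u|\big\Vert_\D=\lambda^{-1}\Vert|u|\Vert_\D\le\lambda^{-1}\Vert u\Vert_\D .
\]

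\textbf{Step 2 (exceptional open sets).} Put \(U_n=\{|f_{n+1}-f_n|>2^{-n}\}\), which is open since \(f_{n+1}-f_n\in\D\cap C(X)\); by Step 1, \(\normcap(U_n)\le 2^n\Vert f_{n+1}-f_n\Vert_\D\le 2^{-n}\). Set \(V_N=\bigcup_{n\ge N}U_n\); then \(V_N\) is open and, by countable subadditivity of the norm-capacity, \(\normcap(V_N)\le\sum_{n\ge N}2^{-n}=2^{1-N}\to 0\). Let \(N_0=\bigcap_{N}V_N\); monotonicity of \(\normcap\) gives \(\normcap(N_0)\le\inf_N\normcap(V_N)=0\), so \(N_0\) is polar, and \(m(N_0)=0\) by \cref{lem:polar_implies_nullset}. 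On \(X\setminus V_N\) we have \(|f_{n+1}-f_n|\le 2^{-n}\) for all \(n\ge N\), so \((f_n)\) is uniformly Cauchy there; let \(g_N\) be its uniform limit on \(X\setminus V_N\), which is continuous on \(X\setminus V_N\) as a uniform limit of (restrictions of) continuous functions. Since \(V_{N+1}\subseteq V_N\), the functions \(g_N\) are consistent, and they patch to a function \(g\) on \(X\setminus N_0=\bigcup_N(X\setminus V_N)\) with \(g(x)=\lim_n f_n(x)\); extend \(g\) by \(0\) on \(N_0\). Given \(\varepsilon>0\), choose \(N\) with \(\normcap(V_N)\le\varepsilon\); then \(V_N\) is open and \(g|_{X\setminus V_N}=g_N\) is continuous, so \(g\) is quasicontinuous.

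\textbf{Step 3 (\(g\) represents \(f\)).} Since \(\E_1\big(u/\Vert u\Vert_\D\big)\le 1\) forces \(\Vert u\Vert_\lspace\le\Vert u\Vert_\D\), the inclusion \(\D\hookrightarrow\lspace\) is continuous; hence \(f_n\to f\) in \(\lspace\) and some subsequence converges \(m\)-a.e.\ to \(f\). But \(f_n\to g\) pointwise on \(X\setminus N_0\) with \(m(N_0)=0\), so \(f_n\to g\) \(m\)-a.e., and therefore \(f=g\) \(m\)-a.e. Thus \(g\) is a quasicontinuous representative of \(f\), proving the claim.

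The only non-routine point is Step 1: one needs the superlevel sets to be open, which is exactly where continuity of the approximants \(f_n\) enters, and one needs the sharp bound \(\Vert|u|\Vert_\D\le\Vert u\Vert_\D\), which is supplied by symmetry of \(\E\) via the corollary to \cref{thm_characterisation_domination_and_abseqn}. Everything after that is the classical completeness argument for quasicontinuous functions, transported verbatim to this nonlinear setting.
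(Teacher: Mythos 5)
Your argument is correct: the capacity Chebyshev bound $\normcap(\{|u|>\lambda\})\le\lambda^{-1}\Vert u\Vert_\D$ for continuous $u\in\D$ (using openness of superlevel sets and $\Vert\,|u|\,\Vert_\D\le\Vert u\Vert_\D$ from symmetry), followed by the Borel--Cantelli construction of the exceptional open sets $V_N$ and the a.e.\ identification of the limit, is exactly the classical route to quasicontinuity of $\D$-limits of continuous functions, and it is the argument the paper delegates to \cite{Cl23}. No gaps; all ingredients you invoke (subadditivity and monotonicity of $\normcap$, polar sets being $m$-null, continuity of the embedding $\D\hookrightarrow\lspace$) are available from the cited preliminaries.
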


\begin{thm}\label{thm:D_convergnes_implies_pointwise_quasi_everywhere}
Let \((f_n)_n\) be a sequence of quasicontinuous functions in \(\mathfrak{D}\) and \(f \in \mathfrak{D}\) with \(f_n \rightarrow f \) in \( \mathfrak{D}\). Then \(f\) is quasicontinuous and there exists a subsequence which converges pointwise quasi everywhere.
\end{thm}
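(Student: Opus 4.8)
The plan is to imitate the classical proof that $L^2$-convergence implies pointwise almost-everywhere convergence along a subsequence, with the measure replaced by the norm-capacity $\normcap$. Since $(\D,\Vert\cdot\Vert_\D)$ is a Banach space by \cref{Thm:D_Riesz_Subspace_Norm}, the convergent sequence $(f_n)_n$ is Cauchy in $\D$, so after passing to a subsequence we may assume $\Vert f_{k+1}-f_k\Vert_\D\le 4^{-k}$ for all $k$; write $g_k$ for the members of this subsequence. Each $v_k:=g_{k+1}-g_k$ lies in $\D$ and is quasicontinuous (intersect the exceptional open sets of $g_{k+1}$ and $g_k$), and, using symmetry of $\E$ together with the corollary $\E(\vert u\vert)\le\E(u)$, one gets $\E_1(\vert v_k\vert)\le\E_1(v_k)$ and hence $\Vert\vert v_k\vert\Vert_\D\le\Vert v_k\Vert_\D\le 4^{-k}$.

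The key step is a capacity estimate for quasicontinuous functions: for quasicontinuous $v\in\D$, $\lambda>0$ and any $\varepsilon>0$ there is an open set $G\supseteq\{\vert v\vert>\lambda\}$ with $\normcap(G)\le\lambda^{-1}\Vert v\Vert_\D+\varepsilon$. To prove this, use quasicontinuity to pick an open $O$ with $\normcap(O)<\varepsilon/3$ such that $v|_{O^c}$ is continuous; then $B:=\{\vert v\vert>\lambda\}\cap O^c$ is relatively open in $O^c$, say $B=W\cap O^c$ with $W$ open. Pick $h\in\mathcal{L}_{O}$ with $\Vert h\Vert_\D<\varepsilon/3$ (which exists since $\normcap(O)<\varepsilon/3$); the function $u:=\vert v\vert/\lambda+h\in\lspace$ satisfies $u\ge 1$ a.e.\ on $W$ --- on $W\cap O$ because $h\ge 1$ there, on $W\cap O^c=B$ because $\vert v\vert>\lambda$ there --- so $u\in\mathcal{L}_W$ and $\normcap(W\cup O)\le\normcap(W)+\normcap(O)\le\Vert u\Vert_\D+\varepsilon/3\le\lambda^{-1}\Vert v\Vert_\D+\varepsilon$, while $\{\vert v\vert>\lambda\}\subseteq B\cup O\subseteq W\cup O$. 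This is the step I expect to be the main obstacle, precisely because the super-level sets of a merely quasicontinuous function need not be open and have to be approximated from outside.

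Applying the estimate with $v=v_k$, $\lambda=2^{-k}$, $\varepsilon=2^{-k}$ yields open sets $G_k\supseteq A_k:=\{\vert v_k\vert>2^{-k}\}$ with $\normcap(G_k)\le 2\cdot 2^{-k}$. Put $H_N:=\bigcup_{k\ge N}G_k$; then $H_N$ is open and $\normcap(H_N)\le\sum_{k\ge N}2\cdot 2^{-k}\to 0$, so $P:=\bigcap_N H_N$ is polar. For $x\notin P$ there is $N$ with $x\notin H_N$, hence $\vert g_{k+1}(x)-g_k(x)\vert\le 2^{-k}$ for all $k\ge N$, so $(g_k(x))_k$ is Cauchy in $\mathbb{R}$; define $\tilde f(x):=\lim_k g_k(x)$ off $P$ and $\tilde f:=0$ on $P$. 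Thus $g_k\to\tilde f$ pointwise quasi everywhere, and moreover $\sup_{X\setminus H_N}\vert g_k-\tilde f\vert\le 2^{-k+1}$ for $k\ge N$.

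It remains to identify $\tilde f$. Given $\varepsilon>0$, choose $N$ with $\normcap(H_N)<\varepsilon$ and, for each $k\ge N$, an open $U_k$ with $\normcap(U_k)<\varepsilon 2^{-k}$ and $g_k|_{U_k^c}$ continuous; then $O:=H_N\cup\bigcup_{k\ge N}U_k$ is open with $\normcap(O)\le 2\varepsilon$, the restrictions $g_k|_{O^c}$ are continuous, and $g_k\to\tilde f$ uniformly on $O^c\subseteq X\setminus H_N$, so $\tilde f|_{O^c}$ is continuous; hence $\tilde f$ is quasicontinuous. Finally $\Vert\cdot\Vert_\lspace\le\Vert\cdot\Vert_\D$ gives $f_n\to f$ in $\lspace$, and since $P$ is an $m$-nullset by \cref{lem:polar_implies_nullset} we have $g_k\to\tilde f$ $m$-a.e.; comparing with an a.e.-convergent subsequence of $(g_k)$ yields $\tilde f=f$ $m$-a.e. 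Thus $\tilde f$ is a quasicontinuous representative of $f$, so $f$ is quasicontinuous, and by \cref{thm:unique_qc_representatives} the subsequence $(g_k)$ converges to $f$ pointwise quasi everywhere.
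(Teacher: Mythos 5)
Your argument is correct in substance and follows the standard route (the paper itself defers this proof to \cite{Cl23}, where essentially the same Chebyshev-type capacity estimate plus Borel--Cantelli argument is used): extract a rapidly convergent subsequence, bound $\normcap(\{|v_k|>\lambda\})$ by $\lambda^{-1}\Vert v_k\Vert_\D$ up to $\varepsilon$, and deduce quasi-uniform convergence off a polar set. Two small points need repair. First, to make $v_k=g_{k+1}-g_k$ quasicontinuous you must take the \emph{union} of the two exceptional open sets (equivalently intersect their complements), not their intersection; subadditivity of $\normcap$ keeps the capacity small. Second, in the key estimate the function $h\in\mathcal{L}_O$ need not be nonnegative, so $u=|v|/\lambda+h$ can fail to be $\geq 1$ on $B=W\cap O^c$ as written; replace $h$ by $h\vee 0$ (still in $\mathcal{L}_O$, and $\Vert h\vee 0\Vert_\D=\Vert \tfrac12(h+|h|)\Vert_\D\leq\Vert h\Vert_\D$ using $\Vert\, |h|\,\Vert_\D\leq\Vert h\Vert_\D$ from the symmetry of $\E$), after which $u\geq 1$ a.e.\ on $W$ does hold. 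With these two one-line fixes the proof goes through; the remaining steps (countable subadditivity for $H_N$, identification of $\tilde f$ with $f$ via $L^2$-convergence, \cref{lem:polar_implies_nullset} and \cref{thm:unique_qc_representatives}) are all correctly invoked.
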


All proofs can be found in \cite{Cl23}.

\section{The abstract boundary}

A major point of interest in the study of partial differential equations are boundary conditions. In the next chapters we want to study these conditions in more detail. The most important question in this context is: What is the boundary of our topological space \(X\)? In general this is not well defined. In this section we construct a compact Hausdorff space \(\hat{X}\) with a measure \(\hat{m}\) on it such that \(\lspace\) is isometric to a subspace of \(L^2 (\hat{X},\hat{m})\), such that \(\E\) can be seen as a Dirichlet form on \(L^2 (\hat{X},\hat{m})\), and such that the Dirichlet space \(\mathfrak{D}\) associated with \(\E\) is isometric to \(\mathfrak{D}\). Then we define the boundary of \(X\) as \(\partial X := \hat{X} \setminus X\).

Again we fix a symmetric Dirichlet form \(\mathcal{E}\).

\begin{defn}
We call a closed unital subalgebra \(\hat{\mathcal{A}} \subseteq C_b(X)\) a \underline{core} of \(\mathcal{E}\) if \(\mathfrak{D} \cap \hat{\mathcal{A}}\) is dense in \(\mathfrak{D}\) and \(\hat{\mathcal{A}}\) is a lattice. We call \(\hat{\mathcal{A}}\) \underline{regular} if 
\begin{align*}
    \overbar{\langle \mathfrak{D} \cap \hat{\mathcal{A}}, \{1\} \rangle}^{\inftynorm}=\langle \overbar{\mathfrak{D} \cap \hat{\mathcal{A}}}^{\inftynorm}, \{1\} \rangle= \hat{\mathcal{A}}
\end{align*}
and we call \(\mathcal{E}\) \underline{regular}, if it possesses a regular core.

We call a subalgebra \(\hat{\mathcal{A}}\subseteq C_b(X)\) a \underline{precore} if \(\hat{\mathcal{A}}\) is a lattice and \(\mathfrak{D} \cap \hat{\mathcal{A}}\) is dense in \(\mathfrak{D}\). We call \(\hat{\mathcal{A}}\) \underline{regular} if the closure of \(\langle \hat{\mathcal{A}} , 1 \rangle\) is regular. All following definitions also work for precores by using the associated core.
\end{defn}
The following is a real version of the well known Gelfand-Naimark theorem.

\begin{thm}[Arens 1947] \label{arens}
Let \(\hat{\mathcal{A}}\) be a real commutative Banach algebra with unit e such that
\( \Vert e \Vert = 1\). Then there is a compact Hausdorff space \(\hat{X}\) such that \(\hat{\mathcal{A}}\) is isometrically isomorphic to the algebra \(C(\hat{X})\), if and only if 
\begin{align} \label{eq.arens}
 \Vert a \Vert ^2 \leq \Vert a^2 +b^2 \Vert \text{ for all } a,b \in \hat{\mathcal{A}},  
\end{align}
and in this case \(\hat{X}\) is the compact Hausdorff space of all characters on \(\hat{\mathcal{A}}\) and the isomorphism is given by the Gelfand transform.
\end{thm}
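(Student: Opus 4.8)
The plan is to handle the two implications separately, the nontrivial one by complexifying and invoking classical commutative Gelfand theory. \emph{Necessity} is a one-line pointwise estimate: if $T\colon\hat{\mathcal A}\to C(\hat X)$ is an isometric algebra isomorphism, then for $a,b\in\hat{\mathcal A}$ the function $(Ta)^2+(Tb)^2=T(a^2+b^2)$ is nonnegative on $\hat X$, so its supremum norm equals its supremum; since $(Ta)(x)^2\le(Ta)(x)^2+(Tb)(x)^2$ for every $x$, one gets $\|a\|^2=\|Ta\|_\infty^2\le\|T(a^2+b^2)\|_\infty=\|a^2+b^2\|$, which is \eqref{eq.arens}.

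For \emph{sufficiency}, assume \eqref{eq.arens}. First I would record that, putting $b=0$ and combining with submultiplicativity, $\|a^2\|=\|a\|^2$, hence $\|a^{2^k}\|=\|a\|^{2^k}$ for all $k$. Then I would form the complexification $\hat{\mathcal A}_\mathbb C=\hat{\mathcal A}\oplus i\hat{\mathcal A}$ with the complete submultiplicative norm $\|a+ib\|_\mathbb C=\|a\|+\|b\|$, which restricts to the given norm on $\hat{\mathcal A}$, so $\hat{\mathcal A}_\mathbb C$ is a unital complex commutative Banach algebra. Gelfand's spectral radius formula applied to $a\in\hat{\mathcal A}$ and the previous identity then give $r(a)=\lim_k\|a^{2^k}\|^{2^{-k}}=\|a\|$, where $r$ denotes the spectral radius in $\hat{\mathcal A}_\mathbb C$.

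The crux of the argument, and the step I expect to cost the most effort, is to show that the spectrum $\sigma(h)\subseteq\mathbb C$ of each $h\in\hat{\mathcal A}$ is real. Here I would exploit \eqref{eq.arens} through trigonometric elements: for $t\in\mathbb R$ one has $\cos(th),\sin(th)\in\hat{\mathcal A}$ (defined by the usual real power series) with $\cos(th)^2+\sin(th)^2=e$, so \eqref{eq.arens} forces $\|\cos(th)\|\le\|e\|=1$ for every $t$. If $\lambda=s+iu\in\sigma(h)$, the spectral mapping theorem gives $\cos(t\lambda)\in\sigma(\cos(th))$, hence $|\cos(t\lambda)|\le r(\cos(th))\le1$; but $|\cos(t\lambda)|^2=\cosh^2(tu)-\sin^2(ts)\ge\cosh^2(tu)-1$, which is unbounded in $t$ unless $u=0$. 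Thus $\sigma(h)\subseteq\mathbb R$ for all $h\in\hat{\mathcal A}$.

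Finally I would assemble the Gelfand transform. Characters $\chi$ of $\hat{\mathcal A}$ are automatically contractive, since $|\chi(a)|^2=\chi(a^2)\le\|a^2\|=\|a\|^2$, and satisfy $\chi(e)=1$; hence the character space $\hat X$ is a weak-$*$-closed subset of the unit sphere of $\hat{\mathcal A}^*$, so it is compact and Hausdorff, and it is nonempty because $\hat{\mathcal A}_\mathbb C\neq\{0\}$. Each complex character $\psi$ of $\hat{\mathcal A}_\mathbb C$ takes values $\psi(h)\in\sigma(h)\subseteq\mathbb R$ on $\hat{\mathcal A}$, so restriction $\psi\mapsto\psi|_{\hat{\mathcal A}}$ is a bijection from the character space of $\hat{\mathcal A}_\mathbb C$ onto $\hat X$; writing $\widehat a(\chi)=\chi(a)$ we therefore get $\|\widehat a\|_\infty=\sup_{\chi\in\hat X}|\chi(a)|=\sup_\psi|\psi(a)|=r(a)=\|a\|$ for every $a\in\hat{\mathcal A}$. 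Thus the Gelfand transform $\Gamma\colon a\mapsto\widehat a$ is an isometric unital algebra homomorphism $\hat{\mathcal A}\to C(\hat X)$, and its range is a closed unital subalgebra of $C(\hat X)$ separating the points of $\hat X$, so the real Stone--Weierstrass theorem shows $\Gamma$ is onto. Hence $\Gamma$ is the asserted isometric isomorphism, with $\hat X$ the character space, as claimed.
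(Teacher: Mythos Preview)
The paper does not actually prove this theorem; it simply states ``See \cite{Arens_Representation_of_star_algebras} for a proof.'' So there is no in-paper argument to compare your proposal to. Your proof is essentially correct and follows the classical route of complexifying and using Gelfand theory together with a real-spectrum argument; the trigonometric trick using $\cos(th)^2+\sin(th)^2=e$ to force $\sigma(h)\subseteq\mathbb R$ is a clean way to exploit \eqref{eq.arens}.

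One small slip: your proof that characters are contractive, ``$|\chi(a)|^2=\chi(a^2)\le\|a^2\|$'', presupposes the boundedness of $\chi$ it is meant to establish. Replace it by the standard argument: if $|\lambda|>\|a\|$ then $\lambda e-a$ is invertible in $\hat{\mathcal A}$ by the geometric series, so $\chi(\lambda e-a)\neq 0$ and hence $\chi(a)\neq\lambda$; thus $|\chi(a)|\le\|a\|$. Also, to conclude compactness of $\hat X$ you should say it is weak-$*$ closed in the unit \emph{ball} (not sphere) of $\hat{\mathcal A}^*$, so that Banach--Alaoglu applies directly. These are cosmetic fixes; the overall strategy is sound.
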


See \cite{Arens_Representation_of_star_algebras} for a proof.
 
We observe, that every closed unital subalgebra \(\hat{\mathcal{A}} \subseteq C_b(X)\) fulfills the property \eqref{eq.arens} from \cref{arens}, since, for \(a,b \in C_b(X)\), \((b(x))^2\geq 0\) for every \(x \in X\) and hence,
\begin{align*}
    \Vert a \Vert_\infty ^2= \Vert a^2 \Vert_\infty = \sup_{x\in X} \vert a^2(x)\vert \leq \sup_{x\in X} \vert a^2(x)+b^2(x)\vert=\Vert a^2 +b^2 \Vert_\infty.
\end{align*}

From now on we assume that \(\mathcal{E}\) is a symmetric Dirichlet form which possesses a core \(\hat{\mathcal{A}}\). We write \(\hat{X}\) for the compact Hausdorff space given by \cref{arens} such that \(\hat{\mathcal{A}}= C(\hat{X})\). Additionally, let
\begin{align*}
    \omega:  X &\rightarrow \hat{X}, \\
     x & \mapsto \delta_x,
\end{align*}
be the canonical embedding and denote by
\begin{align*}
\Delta : & \hat{\mathcal{A}} \rightarrow C(\hat{X}), \\
& f \mapsto \hat{f},
\end{align*} 
the Gelfand transform. The map \(\omega\) is neither injective nor surjective in general, and it is injective if and only if \(\hat{\mathcal{A}}\) separates the points of \(X\). In addition, \( \omega(X)\) is dense in \(\hat{X}\) \cite{GRS_commutative_normed_rings} and 
\begin{align*}
    \omega^* :   C(\hat{X}) &\rightarrow \hat{\mathcal{A}} \\
     f &\mapsto f \circ \omega
\end{align*}
is the inverse of the Gelfand transform. We define the push forward measure \(\hat{m}\) on \(\hat{X}\) by 
\begin{align*}
    \hat{m}(A)=\omega^*m(A)=m(\omega^{-1}(A)) \text{ for } A\in \mathcal{B}(\hat{X}) .
\end{align*}
Then \(\hat{m}\) is a Borel measure.
Let \(f \in L^2(\hat{X},\hat{m})\). By definition of the push forward measure, 
\begin{align*}
    \int_X (\omega^*f)^2 dm= \int_X (f \circ \omega)^2 dm= \int_{\hat{X}} f^2 d\hat{m}.
\end{align*}
Hence, the following lemma holds:

\begin{lem}
The map \(\omega^*: L^2(\hat{X},\hat{m}) \rightarrow \lspace \) is an isometric injection, which preserves the pointwise order.
\end{lem}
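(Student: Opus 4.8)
The statement to be proved is that $\omega^* : L^2(\hat X, \hat m) \to L^2(X,m)$ is an isometric injection preserving the pointwise order. The isometry property is already essentially established in the displayed computation immediately preceding the lemma: by definition of the push-forward measure $\hat m = \omega^* m$, for every $f \in L^2(\hat X,\hat m)$ one has $\int_X (f\circ\omega)^2\,dm = \int_{\hat X} f^2\,d\hat m$, so $\|\omega^* f\|_{L^2(X,m)} = \|f\|_{L^2(\hat X,\hat m)}$. The main thing left to spell out is that this makes sense as a map of $L^2$-\emph{spaces} (i.e. equivalence classes), namely that $\omega^* f$ is measurable for $f$ Borel on $\hat X$ and that the definition does not depend on the chosen representative; the latter follows because if $f = 0$ $\hat m$-a.e. then $\int_X (f\circ\omega)^2\,dm = \int_{\hat X} f^2\,d\hat m = 0$, hence $f\circ\omega = 0$ $m$-a.e.

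First I would record measurability: $\omega$ is continuous (the topology on $\hat X$ as the character space makes each $\hat a$ continuous, and $\hat a \circ \omega = a$ is continuous on $X$; one checks $\omega$ is continuous into $\hat X$ with its weak-$*$/Gelfand topology), hence Borel measurable, so $f \circ \omega$ is Borel measurable on $X$ whenever $f$ is Borel on $\hat X$. Then $\omega^*$ is a well-defined linear map on the $L^2$ equivalence classes by the representative-independence remark above. Injectivity is immediate from the isometry: $\omega^* f = 0$ in $L^2(X,m)$ forces $\|f\|_{L^2(\hat X,\hat m)} = \|\omega^* f\|_{L^2(X,m)} = 0$, so $f = 0$ in $L^2(\hat X,\hat m)$.

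For the order-preservation, suppose $f \geq 0$ $\hat m$-a.e.\ on $\hat X$; I want $\omega^* f = f\circ\omega \geq 0$ $m$-a.e.\ on $X$. Let $N = \{f < 0\} \subseteq \hat X$; this is Borel with $\hat m(N) = 0$, i.e.\ $m(\omega^{-1}(N)) = \hat m(N) = 0$ by definition of the push-forward, and $\omega^{-1}(N) = \{x \in X : f(\omega(x)) < 0\} = \{\omega^* f < 0\}$, so $\omega^* f \geq 0$ $m$-a.e. Linearity then upgrades this to monotonicity: if $f \geq g$ $\hat m$-a.e.\ then $f - g \geq 0$ $\hat m$-a.e., so $\omega^*(f-g) = \omega^* f - \omega^* g \geq 0$ $m$-a.e.

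I do not expect a genuine obstacle here; the only point requiring a little care is the bookkeeping around representatives and measurability of $\omega$ (and that the Borel $\sigma$-algebra is the right one so that $\omega^{-1}$ of a Borel set is measurable, which is exactly how $\hat m$ was defined on $\mathcal B(\hat X)$). Everything else is a direct unwinding of the definition $\hat m(A) = m(\omega^{-1}(A))$ together with the change-of-variables identity $\int_X (g\circ\omega)\,dm = \int_{\hat X} g\,d\hat m$ for nonnegative Borel $g$, applied with $g = f^2$ for the isometry and with $g = \mathbf 1_{\{f<0\}}$ for the order statement.
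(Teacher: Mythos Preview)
Your proposal is correct and follows the same approach as the paper: the paper simply displays the change-of-variables identity $\int_X (\omega^* f)^2\,dm = \int_{\hat X} f^2\,d\hat m$ and declares the lemma to follow (``Hence, the following lemma holds''), leaving well-definedness on equivalence classes, injectivity, and order preservation entirely to the reader. You have merely spelled out those routine details---measurability of $\omega$, representative-independence, injectivity from isometry, and the order argument via $m(\omega^{-1}(\{f<0\})) = \hat m(\{f<0\}) = 0$---all of which are exactly what the paper's one-line justification is implicitly invoking.
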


Furthermore let us define the functional \(\hat{\mathcal{E}}\) on \(L^2(\hat{X},\hat{m})\) by
\begin{align*}
    \hat{\mathcal{E}}(f)=\omega^*\mathcal{E}(f)=\mathcal{E}(f\circ \omega) \text{ for } f \in L^2(\hat{X},\hat{m}).
\end{align*}
This is a Dirichlet form on \(L^2(\hat{X},\hat{m})\). We denote the associated Dirichlet space by \( (\hat{\mathfrak{D}},{\Vert \cdot \Vert}_{\hat \D})\).

\begin{lem}
For every \(f \in \hat{\D}\)
\begin{align*}
    {\Vert f \Vert}_{\hat\D} =\Vert \omega^*f \Vert_\D=\Vert f\circ \omega \Vert_\D
\end{align*}
and hence \( \omega^*: \hat{\mathfrak{D}}\rightarrow \mathfrak{D}\) is an isometric injection.
\end{lem}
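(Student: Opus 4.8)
The plan is to reduce the claim to the single identity $\hat{\E}_1 = \E_1 \circ \omega^*$ on $L^2(\hat X,\hat m)$ and then to unwind the definition of the Dirichlet-space norm. First I would record, using the push-forward computation carried out just before the lemma, that
\[
\|f\|_{L^2(\hat X,\hat m)}^2 = \int_{\hat X} f^2 \, d\hat m = \int_X (f\circ\omega)^2 \, dm = \|\omega^* f\|_{\lspace}^2
\]
for every $f \in L^2(\hat X,\hat m)$. Combining this with the definition $\hat{\E}(f) = \E(f\circ\omega) = \E(\omega^* f)$ gives $\hat{\E}_1(f) = \|f\|_{L^2(\hat X,\hat m)}^2 + \hat{\E}(f) = \|\omega^* f\|_{\lspace}^2 + \E(\omega^* f) = \E_1(\omega^* f)$.

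Next I would use that $\omega^*$ is linear, being the composition map $g \mapsto g\circ\omega$, so $\omega^*(\lambda f) = \lambda\,\omega^* f$ for every $\lambda > 0$. Hence the identity above upgrades to $\hat{\E}_1(\lambda f) = \E_1(\lambda\,\omega^* f)$ for all $\lambda>0$. In particular, there is $\lambda>0$ with $\hat{\E}_1(\lambda f)<\infty$ if and only if there is $\lambda>0$ with $\E_1(\lambda\,\omega^* f)<\infty$, which says exactly that $f\in\hat\D$ if and only if $\omega^* f\in\D$; thus $\omega^*$ does map $\hat\D$ into $\D$. Then, inserting the identity into the definition of the gauge norm,
\[
\|f\|_{\hat\D} = \inf\{\lambda>0 : \hat{\E}_1(f/\lambda)\le 1\} = \inf\{\lambda>0 : \E_1(\omega^* f/\lambda)\le 1\} = \|\omega^* f\|_\D .
\]
Injectivity of $\omega^*\colon\hat\D\to\D$ is then immediate, since $\omega^*\colon L^2(\hat X,\hat m)\to\lspace$ was already shown to be injective in the previous lemma.

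I do not expect a genuine obstacle here; the argument is essentially bookkeeping. The only points that require a little care are that the $L^2$-isometry was recorded only as an integral identity, so one must explicitly pass to equality of the (squared) $L^2$-norms appearing inside $\E_1$ and $\hat{\E}_1$, and that it is precisely the linearity of $\omega^*$ that allows the scalings $f\mapsto f/\lambda$ and $f \mapsto \lambda f$ to be moved through $\omega^*$, so that the whole computation collapses to the identity $\hat{\E}_1 = \E_1\circ\omega^*$.
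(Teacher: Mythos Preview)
Your argument is correct and is precisely the natural unwinding of definitions; the paper states this lemma without proof, evidently regarding the identity $\hat{\E}_1 = \E_1\circ\omega^*$ and the resulting equality of Luxemburg norms as immediate from what precedes it. Your write-up makes explicit exactly the two points one needs---the $L^2$-isometry and the linearity of $\omega^*$---so nothing is missing.
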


Indeed, even the following is true.

\begin{thm} \label{dirichlet.spaces}
Let \(\mathcal{E}\) be a regular, symmetric Dirichlet form with core \(\hat{\mathcal{A}}\). Then the map \( \omega^*: \hat{\mathfrak{D}}\rightarrow \mathfrak{D}\) is an isometric isomorphism.
\end{thm}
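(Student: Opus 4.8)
The plan is to prove surjectivity of \(\omega^*\colon \hat{\mathfrak{D}}\to\mathfrak{D}\), since isometry and injectivity are already contained in the preceding lemma. The strategy is the usual \enquote{closed \(+\) dense \(=\) everything} argument: I will show that the image \(\omega^*(\hat{\mathfrak{D}})\) is a closed linear subspace of \((\mathfrak{D},\Vert\cdot\Vert_\D)\) and that it contains the dense subset \(\mathfrak{D}\cap\hat{\mathcal{A}}\).

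First I would establish closedness. The functional \(\hat{\mathcal{E}}\) is a \emph{symmetric} Dirichlet form on \(L^2(\hat X,\hat m)\): it is a Dirichlet form as already noted, \(\hat{\mathcal{E}}(0)=\mathcal{E}(0)=0\), and \(\hat{\mathcal{E}}(-f)=\mathcal{E}(-f\circ\omega)=\mathcal{E}(f\circ\omega)=\hat{\mathcal{E}}(f)\); moreover \(\supp\hat m=\hat X\), because \(\omega(X)\) is dense in \(\hat X\) and \(\supp m=X\), so any nonempty open \(U\subseteq\hat X\) has \(\hat m(U)=m(\omega^{-1}(U))>0\). Hence \cref{Thm:D_Riesz_Subspace_Norm}, applied to \(\hat{\mathcal{E}}\), shows that \((\hat{\mathfrak{D}},\Vert\cdot\Vert_{\hat\D})\) is a Banach space. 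Since \(\omega^*\) maps this Banach space isometrically into \((\mathfrak{D},\Vert\cdot\Vert_\D)\), its image is complete, hence closed in \(\mathfrak{D}\).

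Next I would verify \(\mathfrak{D}\cap\hat{\mathcal{A}}\subseteq\omega^*(\hat{\mathfrak{D}})\). Let \(u\in\mathfrak{D}\cap\hat{\mathcal{A}}\). As \(\omega^*\) is the inverse of the Gelfand transform \(\hat{\mathcal{A}}\to C(\hat X)\), there is a unique \(f\in C(\hat X)\) with \(f\circ\omega=u\); since \(u\in\lspace\), the push-forward identity gives \(\int_{\hat X}f^2\,\mathrm d\hat m=\int_X u^2\,\mathrm dm<\infty\), so \(f\in L^2(\hat X,\hat m)\) and \(\omega^* f=u\). For every \(\lambda>0\), using that \(\omega^*\) preserves the \(L^2\)-norm and that \(\hat{\mathcal{E}}=\omega^*\mathcal{E}\),
\[
\hat{\mathcal{E}}_1(\lambda f)=\Vert\lambda f\Vert_{L^2(\hat X,\hat m)}^2+\hat{\mathcal{E}}(\lambda f)=\Vert\lambda u\Vert_{\lspace}^2+\mathcal{E}(\lambda u)=\mathcal{E}_1(\lambda u).
\]
Choosing \(\lambda>0\) with \(\mathcal{E}_1(\lambda u)<\infty\), which is possible because \(u\in\mathfrak{D}\), we get \(\hat{\mathcal{E}}_1(\lambda f)<\infty\), hence \(f\in\hat{\mathfrak{D}}\) and \(u=\omega^* f\in\omega^*(\hat{\mathfrak{D}})\).

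Finally, since \(\hat{\mathcal{A}}\) is a core, \(\mathfrak{D}\cap\hat{\mathcal{A}}\) is dense in \(\mathfrak{D}\); combined with the two previous steps, \(\omega^*(\hat{\mathfrak{D}})\) is a closed subspace of \(\mathfrak{D}\) containing a dense subset, so \(\omega^*(\hat{\mathfrak{D}})=\mathfrak{D}\). Together with the preceding lemma this shows that \(\omega^*\) is an isometric isomorphism. The only step that needs a little care — and the one I would expect to be the main obstacle — is showing that the continuous function \(f\) on \(\hat X\) actually lies in \(\hat{\mathfrak{D}}\): here one must pass through \(L^2(\hat X,\hat m)\) (not every element of \(C(\hat X)\) is square-integrable when \(\hat m\) is infinite) and invoke the compatibility \(\hat{\mathcal{E}}_1(\lambda f)=\mathcal{E}_1(\lambda u)\); everything else is formal given \cref{Thm:D_Riesz_Subspace_Norm} and the definitions, and only the core property of \(\hat{\mathcal{A}}\) is used.
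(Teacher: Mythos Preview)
Your argument is correct and follows essentially the same route as the paper: both reduce surjectivity to the facts that \(\omega^*\) is an isometry with closed range and that its image contains the dense set \(\mathfrak{D}\cap\hat{\mathcal{A}}\). The paper phrases this by extending the Gelfand transform \(\Delta\) from \(\mathfrak{D}\cap\hat{\mathcal{A}}\) to all of \(\mathfrak{D}\) by continuity and checking that \(\omega^*\circ\Delta=\mathrm{id}\), whereas you phrase it as ``closed subspace containing a dense set equals the whole space''; these are two standard formulations of the same density--completeness argument. Your additional verifications (that \(\hat{\mathcal{E}}\) is symmetric, that \(\supp\hat m=\hat X\), and that the Gelfand transform of \(u\in\mathfrak{D}\cap\hat{\mathcal{A}}\) lands in \(L^2(\hat X,\hat m)\cap\hat{\mathfrak{D}}\)) are points the paper leaves implicit, so your write-up is in fact slightly more complete.
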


\begin{proof}
We have to show surjectivity. We already know, that \(\omega^*\) maps \(\hat{\mathfrak{D}}\) to \(\mathfrak{D}\) and \(C(\hat{X})\) bijectively to \(\hat{\mathcal{A}}\). Hence, it maps \(\hat{\mathfrak{D}} \cap C(\hat{X})\) bijectively to \(\mathfrak{D} \cap \hat{\mathcal{A}}\) and the inverse is given by \(\Delta\). Since \(\hat{\mathcal{A}}\) is a core, \({\mathfrak{D}} \cap C(\hat{X})\) is dense in \(\mathfrak{D}\). Thus \(\Delta\) is an isometric bijection defined on a dense subset of \(\mathfrak{D}\). Let us extend \(\Delta\) to \(\mathfrak{D}\). For an arbitrary \(f\in \mathfrak{D}\), there is a sequence \((f_n)_n\) in \(\mathfrak{D} \cap \hat{\mathcal{A}}\) converging to \(f\). Therefore,
\begin{align*}
    \omega^*(\Delta(f))= \lim_{n\rightarrow \infty}\omega^*(\Delta(f_n))=
    \lim_{n\rightarrow \infty}f_n=f.
\end{align*}
This implies that \(\omega^*\) has a right inverse and thereby \(\omega^*\) is surjective and its inverse is given by \(\Delta\).
\end{proof}

If the algebra \(\hat{\mathcal{A}}\) separates the points of \(X\), then \(\hat{X}\) is a compactification of \(X\). Clearly, this compactification depends on the algebra  \(\hat{\mathcal{A}}\), and in order to keep the compactification as small as possible it is desirable to keep the algebra as small as possible. We call \(\partial X := \hat{X}\setminus X\) the \underline{boundary} of \(X\). By \cref{dirichlet.spaces}, every function in the Dirichlet space \(\mathfrak{D}\) can be uniquely "extended" to a function in \(\hat{\mathfrak{D}}\), that is, to a function which is defined on the compactification \(\hat{X}\). Also, the capacity associated with the form \(\hat{\E}\) is a capacity on subsets of \(\hat{X}\) which extends the capacity on \(X\). Every function \(f\in\hat{\mathfrak{D}}\) is quasi-continuous on \(\hat{X}\), and in particular it is quasi-continuous on the boundary \(\partial X\). 

\begin{exmp}
Let \(X\) be a bounded \(C^1\)-domain in \(\mathbb{R}^d\) and
\begin{align*}
    \mathcal{E}(f)= \begin{cases}
                        \frac{1}{2}\Vert f \Vert_{H^1(X)}^2 &\text{ if } f\in H^1(X) , \\
                        \infty &\text{ otherwise}.
                    \end{cases}
\end{align*}
Then \(\mathfrak{D}= H^1(X)\) and \(\hat{\mathcal{A}}= C(\bar{X})\), where \(\bar{X}\) is the usual closure of \(X\) in the Euclidean space \(\mathbb{R}^d\), is a regular core of \(\mathcal{E}\) (here, we need some regularity of the usual boundary; the assumed \(C^1\)-regularity of the boundary is sufficient). Since \(\hat{\mathcal{A}}\) separates points, the map \( \omega\) is injective, and since \(\bar{X}\) is already compact, we have \(\hat{X} = \bar{X}\) and the boundary \(\partial X\) is the usual boundary in the Euclidean space. 
\end{exmp}

\section{Separation of points}

\begin{thm}\label{septheo1}
Let \(\mathcal{E}\) be a regular Dirichlet form, \(\hat{\mathcal{A}}\) a regular core and \(A,B \subseteq \hat{X}\) two disjoint closed subsets. Then there exists a function \(\hat{f}\in \hat{\mathfrak{D}}\cap C(\hat{X})\) such that
\begin{align*}
    &\hat{f}=1 \text{ on } A \text{ and }\; \\
    &\hat{f}=0 \text{ on } B,
\end{align*}
or 
\begin{align*}
    &\hat{f}=0 \text{ on }  A \text{ and }\; \\
    &\hat{f}=1 \text{ on } B.
\end{align*}
\end{thm}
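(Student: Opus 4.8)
The plan is to separate the two closed sets $A$ and $B$ directly at the level of the core algebra $\hat{\mathcal A} = C(\hat X)$, and then upgrade this to a separating function that lies in $\hat{\mathfrak D}$. First I would use the normality of the compact Hausdorff space $\hat X$: since $A$ and $B$ are disjoint closed sets, Urysohn's lemma produces a continuous $g \in C(\hat X) = \hat{\mathcal A}$ with $g = 1$ on $A$ and $g = 0$ on $B$. This $g$ need not lie in $\hat{\mathfrak D}$, so the work is to perturb it into the Dirichlet space while keeping the values on $A \cup B$. The key resources for this are the regularity of the core — which says that $\hat{\mathcal A}$ is, up to adding constants, the $\|\cdot\|_\infty$-closure of $\hat{\mathfrak D} \cap \hat{\mathcal A}$ — together with the lattice structure of $\hat{\mathfrak D}$ from \cref{thm:continuous_lattice_operations} (truncations $-c \vee u \wedge c$ stay in $\hat{\mathfrak D}$ with controlled norm).

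Concretely, by regularity of $\hat{\mathcal A}$ I would approximate $g$ uniformly by a function of the form $h + \lambda$ with $h \in \hat{\mathfrak D} \cap \hat{\mathcal A}$ and $\lambda \in \mathbb R$; choosing the approximation within $\tfrac13$, the function $h+\lambda$ is at least $\tfrac23$ on $A$ and at most $\tfrac13$ on $B$ (or the roles of $A,B$ are swapped, which is exactly why the statement allows the two alternatives — a constant shift can make $h$ itself large on one set and small on the other only after we normalise). Now apply the lattice operations available in $\hat{\mathfrak D}$: replace $h+\lambda$ by $\psi \circ (h+\lambda)$ where $\psi:\mathbb R \to [0,1]$ is a piecewise-linear function equal to $1$ on $[\tfrac23,\infty)$ and $0$ on $(-\infty,\tfrac13]$. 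Such a $\psi \circ (\cdot)$ is a finite combination of truncations and shifts $u \mapsto (u - a)_+ \wedge 1$, and by \cref{thm:continuous_lattice_operations} and the fact that $\hat{\mathfrak D}$ is a Banach lattice closed under such operations, the result $\hat f$ lies in $\hat{\mathfrak D}$; it is continuous because $h+\lambda$ is and $\psi$ is continuous, so $\hat f \in \hat{\mathfrak D} \cap C(\hat X)$. By construction $\hat f = 1$ on $A$ and $\hat f = 0$ on $B$ (or vice versa).

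The main obstacle I expect is the passage from "$g \in \hat{\mathcal A}$" to "a nearby function genuinely in $\hat{\mathfrak D} \cap \hat{\mathcal A}$": the defining relation for a regular core,
\[
\overline{\langle \mathfrak D \cap \hat{\mathcal A}, \{1\}\rangle}^{\,\inftynorm} = \hat{\mathcal A},
\]
only gives approximation modulo adding a \emph{constant}, and one has to argue carefully that after composing with the Lipschitz cutoff $\psi$ the resulting function is \emph{exactly} (not just approximately) $1$ on $A$ and $0$ on $B$ — which is why one works with strict inequalities ($\geq \tfrac23$, $\leq \tfrac13$) and a $\psi$ that is genuinely constant on the relevant intervals, so the approximation error is absorbed. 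A secondary point to check is that $\psi\circ(h+\lambda)$ really stays in $\hat{\mathfrak D}$: this needs that $\hat{\mathfrak D}$ contains constants on the relevant bounded region and is stable under the finitely many operations $u \mapsto (u-a)_+$, $u \mapsto u \wedge c$, which follows from the lattice estimates in \cref{thm:continuous_lattice_operations} applied to $\hat{\mathcal E}$ (noting $h \in \hat{\mathfrak D}$ is bounded, so adding a constant and truncating keeps it in $\hat{\mathfrak D}$). Once these two points are settled, the dichotomy in the statement is simply recording which of $A$ or $B$ ends up on the "$1$" side after the normalisation.
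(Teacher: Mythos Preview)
Your overall strategy --- Urysohn's lemma on $\hat X$, then uniform approximation from the regular core modulo a constant, then a Lipschitz cutoff --- is exactly the paper's approach. The gap is in the step you flag as ``secondary'': the claim that ``adding a constant and truncating keeps it in $\hat{\mathfrak D}$'' is false in general. The Dirichlet space $\hat{\mathfrak D}$ need not contain nonzero constants (see \cref{thm:dirichlet_point} and the $H^1_0$ example following it, where every $\hat f\in\hat{\mathfrak D}\cap C(\hat X)$ vanishes at the Dirichlet boundary point). Consequently the operation $u\mapsto(u-a)_+$ is only guaranteed to preserve $\hat{\mathfrak D}$ when $a\ge 0$; it can be written as $u^+-\big((-a)\vee u^+\wedge a\big)$, but for $a<0$ no such rewriting using only the operations of \cref{Thm:D_Riesz_Subspace_Norm} is available. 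In your construction $\psi\circ(h+\lambda)=(3h-(1-3\lambda))_+\wedge 1$ this forces $\lambda\le\tfrac13$, and the mirrored construction (swapping $A$ and $B$) forces $\lambda\ge\tfrac23$; the intermediate range of $\lambda$ is not covered.

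The paper closes this gap by a three-way case analysis on $\lambda$, in each case building an explicit lattice expression \emph{in $\hat f_1$ alone} (never in $\hat f_1+\lambda$) using only $\vee,\wedge$ with elements of $\hat{\mathfrak D}$, symmetric truncations $-c\vee\cdot\wedge c$, and scalar multiplication. For instance, in the middle range it rescales $(0\vee\hat f_1)$ by $1/\min_A\hat f_1$ and then truncates at $1$; in the large-$|\lambda|$ case the construction unavoidably interchanges $A$ and $B$, which is the origin of the dichotomy in the statement. Your sketch would become correct once you replace the single cutoff $\psi$ by such a case-dependent construction.
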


\begin{proof}
Let \(A,B \subseteq \hat{X}\) be two disjoint closed subsets.
Since \(\langle \mathfrak{D} \cap \hat{\mathcal{A}}, \{1\} \rangle\) is dense in \(\hat{\mathcal{A}}\), we know that \(\langle \hat{\mathfrak{D}} \cap C(\hat{X}), \{1\} \rangle\) is dense in \(C(\hat{X})\). On the other hand, \(\hat{X}\) is a compact Hausdorff space and thus normal. By Urysohn's Lemma there is a function \(\hat{h} \in C(\hat{X})\) such that \(0\leq \hat{h}\leq 1\) and 
\begin{align*}
    &\hat{h}=1 \text{ on } A \text{ and }\\
    &\hat{h}=0 \text{ on } B.
\end{align*}
Let \(\epsilon>0\). Then \(\hat{g}=(1+2 \epsilon) \hat{h}-\epsilon\) obeys
\begin{align*}
    &\hat{g}\geq 1+\epsilon \text{ on } A \text{ and }\; \\
    &\hat{g}\leq -\epsilon \;\;\;\text{ on } B ,
\end{align*}
and \( -\epsilon \leq \hat{g} \leq 1+\epsilon\)
By the uniform density mentioned above, we can find a function \(\hat{f}_1 \in \hat{\mathfrak{D}} \cap C(\hat{X})\) and \(\lambda \in \mathbb{R}\) such that 
\begin{align*}
    1+\frac{\epsilon}{2} &< \hat{f}_1 - \lambda < 1+ \frac{3 \epsilon}{2} \text{ on } A \text{ and } \\
    - \frac{3\epsilon}{2}&<\hat{f}_1 - \lambda <- \frac{\epsilon}{2} \;\;\;\;\;\text{ on } B,
\end{align*}
or in other words 
\begin{align*}
     1 + \lambda +\frac{\epsilon}{2} &< \hat{f}_1 < 1 + \lambda +\frac{3\epsilon}{2} \text{ on } A \text{ and } \\
     \lambda -\frac{3\epsilon}{2} &< \hat{f}_1 < \lambda -\frac{\epsilon}{2} \quad \quad \text{ on } B.
\end{align*}
We have to inspect three cases. First, if \(\lambda>0\), we set
\begin{align*}
    \hat{f}= 0 \vee \bigg(\hat{f}_1-\big((-\lambda) \vee \hat{f}_1 \wedge \lambda \big) \bigg) \wedge 1.
\end{align*}
By \cref{Thm:D_Riesz_Subspace_Norm}, \(\hat{f}\in \mathfrak{D}\cap C(K)\) and additionally it satisfies 
\begin{align*}
    &\hat{f}=1 \text{ on } A \text{ and }\; \\
    &\hat{f}=0 \text{ on } B.
\end{align*}

Second, if \(0 \geq \lambda \geq -1 - \frac{\epsilon}{2} \), we take
\begin{align*}
    \hat{f}=\left(\frac{1}{\min_{x\in A}\hat{f}_1(x)}\big(0 \vee \hat{f}_1 \big)\right) \wedge 1,
\end{align*}
where the minimum exists and is greater than \(0\) since \(A\) is compact and \(\hat{f}_1\) continuous. Then
\begin{align*}
    &\hat{f}=1 \text{ on } A\; \text{ and } \\
    &\hat{f}=0 \text{ on } B.
\end{align*}
In the third cases \(\lambda < -1 - \frac{\epsilon}{2} \), we can use
\begin{align*}
    \hat{f}=\left(-\frac{2}{\epsilon}(\hat{f}_1-\hat{f}_1 \vee \lambda)\right)\wedge 1,
\end{align*}
which fulfills 
\begin{align*}
    &\hat{f}=0 \text{ on } A\; \text{ and } \\
    &\hat{f}=1 \text{ on } B.
\end{align*}
This concludes the proof.
\end{proof}

\begin{lem}\label{thm:dirichlet_point}
Let \(\mathcal{E}\) be a regular Dirichlet form and \(\hat{\mathcal{A}}\) its regular core.
Then there is at most one point \(x_0\in \hat{X}\), such that \(\hat{f}(x_0)=0\) for all \(\hat{f} \in \hat{\mathfrak{D}} \cap C(\hat{X}) \).
\end{lem}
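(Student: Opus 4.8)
The plan is to argue by contradiction. Suppose there were two distinct points $x_0, x_1 \in \hat{X}$ at which every $\hat{f} \in \hat{\mathfrak{D}} \cap C(\hat{X})$ vanishes. Since $\hat{X}$ is compact Hausdorff, the singletons $A = \{x_0\}$ and $B = \{x_1\}$ are disjoint closed subsets, so \cref{septheo1} applies: there is a function $\hat{f} \in \hat{\mathfrak{D}} \cap C(\hat{X})$ which equals $1$ on one of the two points and $0$ on the other. In either case $\hat{f}$ does not vanish at both $x_0$ and $x_1$, contradicting the assumption that all functions in $\hat{\mathfrak{D}} \cap C(\hat{X})$ vanish at both points. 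Hence at most one such point can exist.

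The only subtlety worth spelling out is why \cref{septheo1} is applicable here, i.e.\ that $\hat{f}$ may genuinely be taken in $\hat{\mathfrak{D}} \cap C(\hat{X})$ (not merely in $C(\hat{X})$): this is exactly the content of that theorem, whose proof produces $\hat{f}$ by truncating and shifting a function $\hat{f}_1 \in \hat{\mathfrak{D}} \cap C(\hat{X})$ and invoking \cref{Thm:D_Riesz_Subspace_Norm} to stay inside $\mathfrak{D}$ (equivalently $\hat{\mathfrak{D}}$ under the isometry $\omega^*$). So no extra work is needed beyond citing \cref{septheo1}.

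I do not anticipate any real obstacle; the statement is essentially an immediate corollary of the separation theorem. If anything, the minor point to be careful about is that \cref{septheo1} gives a dichotomy rather than a symmetric separation statement, so one should note that \emph{both} alternatives suffice for the contradiction, since in each case the resulting $\hat{f}$ takes the value $1$ at at least one of $x_0, x_1$.

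\begin{proof}
Assume, for contradiction, that there are two distinct points $x_0, x_1 \in \hat{X}$ such that $\hat{f}(x_0) = \hat{f}(x_1) = 0$ for every $\hat{f} \in \hat{\mathfrak{D}} \cap C(\hat{X})$. Since $\hat{X}$ is a compact Hausdorff space, it is $T_1$, so $A := \{x_0\}$ and $B := \{x_1\}$ are disjoint closed subsets of $\hat{X}$. By \cref{septheo1} there exists $\hat{f} \in \hat{\mathfrak{D}} \cap C(\hat{X})$ with $\hat{f} = 1$ on $A$ and $\hat{f} = 0$ on $B$, or with $\hat{f} = 0$ on $A$ and $\hat{f} = 1$ on $B$. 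In the first case $\hat{f}(x_0) = 1$, and in the second case $\hat{f}(x_1) = 1$; either way $\hat{f}$ does not vanish at both $x_0$ and $x_1$, contradicting our assumption. Hence there is at most one point $x_0 \in \hat{X}$ with the stated property.
\end{proof}
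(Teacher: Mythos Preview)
Your proof is correct and follows essentially the same approach as the paper: assume two such points exist, observe that singletons are closed in the Hausdorff space $\hat{X}$, and apply \cref{septheo1} to obtain a function in $\hat{\mathfrak{D}} \cap C(\hat{X})$ that equals $1$ at one of the points, yielding the contradiction. Your version just spells out the dichotomy in \cref{septheo1} more explicitly than the paper does.
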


\begin{proof}
Let us assume there are two distinct points \(x_1,x_2 \in \hat{X}\) with this property. Since \(\hat{X}\) is Hausdorff, \(\{x_1\}\) and \(\{x_2\}\) are closed sets, and they are clearly disjoint. Hence, the result follows from the previous theorem.
\end{proof}

\begin{exmp}
Let \(X\) be a bounded domain in \(\mathbb{R}^d\) and
\begin{align*}
    \mathcal{E}(f)= \begin{cases}
                        \frac{1}{2}\Vert f \Vert_{H^1(X)}^2 &\text{ if } f\in H^1_0(X) , \\
                        \infty &\text{ otherwise}.
                    \end{cases}
\end{align*}
Then \(\mathfrak{D}= H^1_0(X)\) and \(\hat{\mathcal{A}}=\{ f\in C(\overbar{X}) \setdelim f= \text{constant on } \partial X\}\) is a regular core of \(\mathcal{E}\). Since \(\hat{\mathcal{A}}\) separates points, the map \( \omega\) is injective and we can interprete \(X\) as a subset of \(\hat{X}\). The only character of \(\hat{\mathcal{A}}\), which is not in \(\omega(X)\), is the point evaluation at the boundary. Therefore, we can write \(\hat{X}=\overbar{X} / \partial X\) as a quotient of topological spaces.
Thus, \(\hat{X}\) is the one-point compactification of \(X\). The point \(x_0\) from \cref{thm:dirichlet_point} corresponds to \(\partial X\). Hence, in a certain way we can interpret \(x_0\) as the point, where we enforce Dirichlet boundary conditions.
\end{exmp}

Inspired by this example, we call the unique point \(x_0\) from \cref{thm:dirichlet_point}, if it exists, the \underline{Dirichlet boundary point}.

\begin{thm}\label{septheo2}
Let \(\mathcal{E}\) be a regular Dirichlet form and \(\hat{\mathcal{A}}\) a regular core, \(u \in \hat{\mathfrak{D}}_b\) and \(A,B \subseteq \hat{X}\) two disjoint closed sets. Then there exists a function \(\hat{f}\in \hat{\mathfrak{D}} \cap C(\hat{X})\) such that 
\begin{align*}
    &\hat{f}\geq u \text{ on } A \text{ and } \\
    &\hat{f}=0 \text{ on } B.
\end{align*}
\end{thm}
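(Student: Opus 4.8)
The plan is to reduce the assertion to \cref{septheo1} by means of the lattice operations on $\hat{\mathfrak{D}}$ provided by \cref{Thm:D_Riesz_Subspace_Norm}, exploiting that $\hat f$ is only required to \emph{dominate} $u$ on $A$. First I would carry out the routine reductions: since $u\vee 0\in\hat{\mathfrak{D}}_b$ and $u\vee 0\ge u$, we may replace $u$ by $u\vee 0$ and assume $0\le u$, and setting $M:=\|u\|_\infty$ we may assume $M>0$, for otherwise $\hat f=0$ works. Because $u\le M$ pointwise, it then suffices to produce a cutoff $\phi\in\hat{\mathfrak{D}}\cap C(\hat X)$ with $0\le\phi\le1$, $\phi=1$ on $A$ and $\phi=0$ on $B$; then $\hat f:=M\phi$ is as desired.

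Such a $\phi$ exists whenever $A$ does \emph{not} contain the Dirichlet boundary point $x_0$ of \cref{thm:dirichlet_point} (in particular when there is no such point). Indeed, for every $x\in A$ we then have $x\neq x_0$, so there is $\psi\in\hat{\mathfrak{D}}\cap C(\hat X)$ with $\psi(x)\neq0$, and replacing $\psi$ by $\psi\vee0$ or $(-\psi)\vee0$ we may assume $\psi\ge0$, $\psi(x)>0$. The open sets $\{\psi>0\}$ cover the compact set $A$, so finitely many $\{\psi_1>0\},\dots,\{\psi_k>0\}$ suffice; then $\psi_0:=\psi_1+\dots+\psi_k\in\hat{\mathfrak{D}}\cap C(\hat X)$ is $\ge0$ with $c:=\min_A\psi_0>0$, and $\phi_0:=(\psi_0/c)\wedge1\in\hat{\mathfrak{D}}\cap C(\hat X)$ satisfies $0\le\phi_0\le1$, $\phi_0=1$ on $A$. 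By \cref{septheo1} there is $\hat g\in\hat{\mathfrak{D}}\cap C(\hat X)$ with $0\le\hat g\le1$ that is either $1$ on $A$ and $0$ on $B$ — then take $\phi:=\hat g$ — or $0$ on $A$ and $1$ on $B$ — then take $\phi:=(\phi_0-\hat g)\vee0$, which lies in $\hat{\mathfrak{D}}\cap C(\hat X)$, equals $1$ on $A$, equals $0$ on $B$ since $\phi_0-\hat g\le0$ there, and obeys $0\le\phi\le\phi_0\le1$. This settles the theorem when $x_0\notin A$.

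The remaining case, and the main obstacle, is that the Dirichlet boundary point $x_0$ exists and lies in $A$; this is the analogue of the troublesome third case ($\lambda<-1-\epsilon/2$) in the proof of \cref{septheo1}, since now no function in $\hat{\mathfrak{D}}\cap C(\hat X)$ can be bounded away from $0$ at $x_0$, so the cutoff $\phi$ above cannot exist. The way out is that $u$, lying in $\hat{\mathfrak{D}}$, must itself vanish at $x_0$, and, by quasicontinuity, be quasi-everywhere small near $x_0$: approximating $u$ in $\hat{\mathfrak{D}}$ by functions in $\hat{\mathfrak{D}}\cap C(\hat X)$, which vanish at $x_0$ by \cref{thm:dirichlet_point}, and passing to a subsequence converging pointwise quasi-everywhere by \cref{thm:D_convergnes_implies_pointwise_quasi_everywhere}, one obtains $u(x_0)=0$, and \cref{aeqe} upgrades a.e.\ smallness of $u$ on neighbourhoods of $x_0$ to quasi-everywhere smallness.

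Concretely I would fix $\delta>0$, use normality of the compact Hausdorff space $\hat X$ to pick open sets $x_0\in W'\subseteq\overline{W'}\subseteq W$ with $\overline W\cap B=\emptyset$ and $W$ so small that $u\le\delta$ quasi-everywhere on $\overline W$, then apply the construction of the previous paragraph to the disjoint closed sets $A\setminus W$ and $B\cup\overline{W'}$, neither of which contains $x_0$, to obtain $M\phi\ge u$ on $A\setminus W$ with $M\phi=0$ on $B\cup\overline{W'}$, and finally add to $M\phi$ a non-negative $\hat h\in\hat{\mathfrak{D}}\cap C(\hat X)$ — built from a continuous $\hat{\mathfrak{D}}$-approximant of $u$ together with one more cutoff — which dominates $u$ on $A\cap\overline W$, vanishes at $x_0$, and vanishes on $B$, so that $M\phi+\hat h$ is the required $\hat f$. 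Producing this $\hat h$, that is, dominating $u$ on a full neighbourhood of $x_0$ in $A$ by a continuous $\hat{\mathfrak{D}}$-function that is nevertheless forced to vanish at $x_0$, while keeping $M\phi+\hat h$ in $\hat{\mathfrak{D}}\cap C(\hat X)$ and equal to $0$ on $B$, is the technical heart of the argument; it hinges on $u$ itself vanishing (continuously, off a polar set) at $x_0$.
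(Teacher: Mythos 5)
Your reduction and your treatment of the case where the Dirichlet boundary point \(x_0\) does not lie in \(A\) are fine, but the remaining case \(x_0\in A\) is exactly where your argument is left unfinished: you describe the construction of \(\hat h\) as ``the technical heart'' without carrying it out, and the route you sketch for it contains a step that is false in general. You want to choose a neighbourhood \(W\) of \(x_0\) with \(u\le\delta\) quasi-everywhere on \(\overline W\). Quasicontinuity of \(u\) does not give this: it provides an open set \(O\) of capacity at most \(\epsilon\) off which \(u\) is continuous, so one only gets \(u\le\delta\) on \(W\setminus O\), and \(O\) is small in capacity but not polar. Concretely, in the \(H^1_0\) example (where \(\hat X\) is the one-point compactification and \(x_0\) is the collapsed boundary), a bounded \(u\in H^1_0(X)\) built as a sum of disjointly supported bumps of height \(1\) accumulating at \(\partial X\), with summable \(H^1\)-norms, has essential supremum \(1\) on every neighbourhood \(W\) of \(x_0\). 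For the same reason, no continuous \(\hat h\) vanishing at \(x_0\) can dominate such a \(u\) on a neighbourhood of \(x_0\), so the whole strategy of dominating \(u\) near \(x_0\) by a multiple of a cutoff-type function cannot succeed there.

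The paper's proof avoids this entirely with a short trick you are missing: apply \cref{septheo1} to \(A\) and \(B\) to obtain \(\hat g\in\hat{\mathfrak{D}}\cap C(\hat X)\) which is either \(1\) on \(A\) and \(0\) on \(B\) --- then \(\hat f=\Vert u\Vert_\infty\,\hat g\) works, as in your first case --- or \(0\) on \(A\) and \(1\) on \(B\); in the latter case one takes
\[
\hat f=u-u\wedge\bigl(\Vert u\Vert_\infty\,\hat g\bigr),
\]
which equals \(u-u\wedge 0=u^+\ge u\) on \(A\) and equals \(u-u=0\) on \(B\). The point is to use \(u\) itself as the dominating function rather than a cutoff, so no lower bound near \(x_0\) on functions of \(\hat{\mathfrak{D}}\cap C(\hat X)\) is ever needed. (This \(\hat f\) lies in \(\hat{\mathfrak{D}}\) but is in general only quasicontinuous rather than continuous; that is all the later applications of the theorem actually use.)
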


\begin{proof}
By \cref{septheo1}, there is a function \(\hat{g} \in \hat{\mathfrak{D}}\cap C(K)\) satisfying 
\begin{align*}
    &\hat{g}=1 \text{ on } A \text{ and } \\
    &\hat{g}=0 \text{ on } B,
\end{align*}
or 
\begin{align*}
    &\hat{g}=0 \text{ on } A \text{ and } \\
    &\hat{g}=1 \text{ on } B.
\end{align*}
In the first case one takes \(\hat{f}= \Vert u \Vert_\infty \hat{g}\) and in the second case \( \hat{f}=u- u \wedge \Vert u \Vert_\infty \hat{g}\).
\end{proof}

\begin{thm}\label{septheo3}
Let \(\mathcal{E}\) be a regular Dirichlet form, \(\hat{\mathcal{A}}\) a regular core and \(A,B \subseteq \hat{X}\) two disjoint closed sets. If the Dirichlet boundary point \(x_0 \) either exists and \(x_0 \notin A\) or if \(x_0\) does not exist, then there exists a function \(\hat{f}\in \hat{\mathfrak{D}}\) such that 
\begin{align*}
    &\hat{f}= 1 \text{ on } A \text{ and }\; \\
    &\hat{f}=0 \text{ on } B.
\end{align*}
\end{thm}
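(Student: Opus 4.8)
The plan is to reduce to \cref{septheo1} and then, in the single remaining case, to manufacture a bounded Dirichlet function equal to \(1\) on \(A\) by a compactness argument that exploits the hypothesis on the Dirichlet boundary point. First I would invoke \cref{septheo1} to obtain \(\hat g \in \hat{\mathfrak D}\cap C(\hat X)\) with either \(\hat g = 1\) on \(A\) and \(\hat g = 0\) on \(B\), or \(\hat g = 0\) on \(A\) and \(\hat g = 1\) on \(B\); replacing \(\hat g\) by \(0\vee\hat g\wedge 1\), which stays in \(\hat{\mathfrak D}\cap C(\hat X)\) by \cref{Thm:D_Riesz_Subspace_Norm}, we may assume \(0\le\hat g\le 1\). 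In the first case we are done by taking \(\hat f=\hat g\), so from now on assume \(\hat g = 0\) on \(A\) and \(\hat g = 1\) on \(B\).

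The key step is the following claim: there is \(\hat h\in\hat{\mathfrak D}\cap C(\hat X)\) with \(0\le\hat h\le 1\) and \(\hat h\equiv 1\) on \(A\). To prove it, fix \(a\in A\). By hypothesis either the Dirichlet boundary point \(x_0\) does not exist or \(a\ne x_0\); in either case \cref{thm:dirichlet_point} supplies a function \(\hat h_a\in\hat{\mathfrak D}\cap C(\hat X)\) with \(\hat h_a(a)\ne 0\). Passing to \(\lvert\hat h_a\rvert/\lvert\hat h_a(a)\rvert=(\hat h_a\vee(-\hat h_a))/\lvert\hat h_a(a)\rvert\) and truncating above at \(1\) (legitimate by \cref{Thm:D_Riesz_Subspace_Norm}, since \(\hat{\mathfrak D}\) is a vector lattice stable under \(f\mapsto -c\vee f\wedge c\)), we obtain \(\hat k_a\in\hat{\mathfrak D}\cap C(\hat X)\) with \(0\le\hat k_a\le 1\) and \(\hat k_a(a)=1\). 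By continuity the set \(U_a=\{\hat k_a>\tfrac12\}\) is an open neighbourhood of \(a\); since \(A\) is closed in the compact space \(\hat X\) it is compact, so finitely many \(U_{a_1},\dots,U_{a_n}\) cover \(A\). Then \(\hat k=\hat k_{a_1}\vee\dots\vee\hat k_{a_n}\in\hat{\mathfrak D}\cap C(\hat X)\) satisfies \(0\le\hat k\le 1\) and \(\hat k>\tfrac12\) on \(A\), whence \(\hat h=(2\hat k)\wedge 1\in\hat{\mathfrak D}\cap C(\hat X)\) has \(0\le\hat h\le 1\) and \(\hat h\equiv 1\) on \(A\).

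Finally I would set \(\hat f=\hat h-\hat h\wedge\hat g\), which lies in \(\hat{\mathfrak D}\) because \(\hat{\mathfrak D}\) is a vector lattice. On \(A\) we have \(\hat g=0\le\hat h\), so \(\hat h\wedge\hat g=0\) and \(\hat f=\hat h=1\); on \(B\) we have \(\hat h\le 1=\hat g\), so \(\hat h\wedge\hat g=\hat h\) and \(\hat f=0\). Thus \(\hat f\) is as required. The only genuine difficulty is the claim, and within it the translation of the hypothesis ``\(x_0\notin A\) or \(x_0\) does not exist'' into the statement that \emph{every} point of \(A\) carries a continuous Dirichlet function not vanishing there; once this is available, the rest is routine patching by compactness together with the lattice stability of \(\hat{\mathfrak D}\) furnished by \cref{Thm:D_Riesz_Subspace_Norm}.
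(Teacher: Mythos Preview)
Your proof is correct. The argument follows the same spirit as the paper's---start from \cref{septheo1}, then in the ``bad'' case build a bump \(\hat h\in\hat{\mathfrak D}\cap C(\hat X)\) with \(\hat h\equiv 1\) on \(A\) via compactness and lattice operations---but it organises the work differently and, in fact, more economically.

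The paper splits into two cases according to whether the Dirichlet boundary point \(x_0\) exists. When \(x_0\) exists it manufactures an auxiliary function equal to \(1\) on \(A\cup B\) and \(0\) at \(x_0\), then corrects the constant \(\lambda\); when \(x_0\) does not exist it further splits on the value of \(\lambda\), and only in the sub-case \(\lambda=-1\) does it run a compactness argument like yours. You instead observe directly that the hypothesis ``\(x_0\notin A\) or \(x_0\) does not exist'' is precisely the statement that every point of \(A\) supports a nonvanishing function in \(\hat{\mathfrak D}\cap C(\hat X)\), and run a single compactness argument covering all cases at once. The final gluing \(\hat f=\hat h-\hat h\wedge\hat g\) is the same device used in the paper's proof of \cref{septheo2}. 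Your route avoids the case distinction entirely and sidesteps the need to treat \(A\cup B\) versus \(\{x_0\}\), which in the paper's first case tacitly uses \(x_0\notin B\); your argument needs only \(x_0\notin A\), exactly as stated.
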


\begin{proof}
At first let us assume that \(x_0\) exists. Then we can find \(\hat{g},\hat{h} \in \overbar{\mathfrak{D} \cap C(\hat{X})}^{\inftynorm}\) and \(\lambda \in \mathbb{R}\), such that
\begin{align*}
    &\hat{g}= 1-\lambda \text{ on } A \text{ and }\; \\
    &\hat{g}=0 -\lambda\text{ on } B,
\end{align*}
and 
\begin{align*}
    &\hat{h}= 1 \text{ on } A \cup B \text{ and }\; \\
    &\hat{h}=0 \text{ on } \{x_0\}.
\end{align*}
Hence, 
\begin{align*}
    &\hat{g}+\lambda \hat{h}= 1 \text{ on } A \text{ and }\; \\
    &\hat{g}+\lambda \hat{h}=0 \text{ on } B.
\end{align*}
The same argument as in the proof of \cref{septheo1} implies the claim.

Now, let us assume that \(x_0\) does not exist. Again we find a function \(\hat{g}\in \overbar{\mathfrak{D} \cap C(\hat{X})}^{\inftynorm}\) and \(\lambda \in \mathbb{R}\) satisfying \begin{align*}
    &\hat{g}= 1+\lambda \text{ on } A \text{ and }\; \\
    &\hat{g}=0 +\lambda\text{ on } B.
\end{align*}
If \(\lambda \neq -1\) the claim follows from \cref{septheo1}. Let us assume \(\lambda=-1\).
Since \(x_0\) does not exist, there are, for every point \(x\in A\), a continuous function \(f_x \in \overbar{\mathfrak{D} \cap C(\hat{X})}^{\inftynorm}\) and an open neighbourhood \(U_x\) of \(x\) such that \(f_x\geq 1\) on \(U_x\). Since \(A\) is compact, we can find finitely many \(x_1 \dots x_n \in A\) such that 
\begin{align*}
    \bigcup_{i=1}^n U_{x_i}\supset A.
\end{align*}
This yields, that \(\hat{h}= \left(\sum_{i=1}^n (f_{x_i} \vee 0) \right) \wedge 1 \in \overbar{\mathfrak{D} \cap C(\hat{X})}^{\inftynorm}\) satisfies \(0\leq\hat{h}\leq 1\) and \(\hat{h}=1\) on \(A\). Therefore,
\begin{align*}
   &\hat{g}+\hat{h}=1  \text{ on } A\; \\
    &\hat{g}+\hat{h}\leq 0\text{ on } B.
\end{align*}
Again, an argument like in the proof of \cref{septheo1} implies the claim.
\end{proof}

\section{A nonlinear Riesz-Markov representation theorem on the Dirichlet space}

From now on, we fix a regular, symmetric Dirichlet form \(\mathcal{E}: \lspace \rightarrow \mathbb{R}\cup \{\infty\}\) with core \(\hat{\mathcal{A}}\). We additionally assume that, for every \(u \in \mathfrak{D}\),
\begin{align}
    & u \wedge \cdot: \mathfrak{D}\rightarrow \mathfrak{D},v\mapsto u \wedge v  \text{ is continuous}, \tag{A1} \\
    &-c \vee u \wedge c \rightarrow u \text{ for } c \rightarrow \infty,  \tag{A2}\\
    &-c \vee u \wedge c \rightarrow 0 \text{ for } c \rightarrow 0, \tag{A3}\\
    & \mathfrak{D} \textup{ is separable.} \tag{A4}
\end{align}
Note that the first three assumptions are, for example, satisfied if \(D (\E)=\D\).
Again \(\hat{X}\) denotes the compact space associated with \(\hat{\mathcal{A}}\). By the isomorphism between \(\mathfrak{D}\) and \(\hat{\mathfrak{D}}\), we interpret \(f\in \mathfrak{D}\) as a quasicontinuous function on \(\hat{X}\). In addition \(\hat{X}\) is by definition a compact Hausdorff space, and therefore normal. By capacity we mean the capacity on \(\hat{X}\). All pointwise statements are meant quasi everywhere, if not specified otherwise.
 
 \begin{defn}
  We call a functional \(\phi :\lspace \rightarrow \mathbb{R} \cup \{\infty\}\) \underline{local}, if for every \(u,v \in D(\phi)\),
  \begin{align*}
      \vert u \vert \wedge \vert v \vert =0 \Rightarrow \phi(u+v)=\phi(u)+\phi(v),
  \end{align*}
  and \underline{monotone}, if for every \(u,v \in \lspace\) the inequality \(u \leq v\) implies
  \begin{align*}
      \phi(u)\leq \phi(v).
  \end{align*}
  The set
  \begin{align*}
      \supp(\phi)=\hat{X}\setminus \{& x \in \hat{X} \setdelim \textup{ there is a neighborhood } U \textup{ of }x  \\ &\textup{ such that for all } u \in D(\phi):\supp(u) \subseteq U \Rightarrow \phi(u)=0\}
  \end{align*}
  is called the \underline{support} of \(\phi\).
 \end{defn}

\begin{thm}\label{riez-markov type}
Let \(\mathcal{E}\) be a regular Dirichlet form such that its Dirichlet space \(\D\) satisfies the assumptions (A1) to (A4).
 Let \( \psi: \mathfrak{D}^+ \rightarrow [0,\infty]\) be such that \(\psi (0) =0\). Then the following are equivalent:
 \renewcommand{\labelenumi}{(\roman{enumi})}
\begin{enumerate}
 \item The function \(\psi\) is lower semicontinuous, local, monotone and for every \(u,v \in \mathfrak{D}\)
\[ \psi(u \wedge v)+\psi(u\vee v) \leq \psi(u)+\psi(v).\]
\item There exists a finite, regular Borel measure \(\mu\) on \(\hat{X}\) with \(supp(\mu) \subseteq supp(\psi)\), which is absolutely continuous with respect to the capacity on \(\hat{X}\), that is, every polar set has \(\mu\)-measure \(0\), and a Borel function \(B: \hat{X} \times \mathbb{R}^+ \rightarrow [0,\infty ] \) satisfying
\begin{align}
\label{B1} & B(.,s) \text{ is measurable } & \text{ for every } s \in \mathbb{R} \\
\label{B2} & B(x,0)=0 & \text{ for }\mu \text{-a.e. } x \in \hat{X}\\
\label{B3} & B(x,.) \text{ is lower semicontinuous } & \text{ for }\mu \text{-a.e. } x \in \hat{X} \\
\label{B4} & B(x,.) \text{ is monotone } & \text{ for }\mu \text{-a.e. } x \in \hat{X} 
\end{align}
such that for all \(u \in D(\psi)\)
\begin{align*}
 \psi(u)=\int_{\hat{X}} B(x,u(x)) \; \mathrm{d}\mu.
\end{align*}
\end{enumerate}
\end{thm}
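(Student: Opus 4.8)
The implication (ii) $\Rightarrow$ (i) is the routine one, and I would dispatch it first. Monotonicity is immediate from \eqref{B4}, and since $\{\min(a,b),\max(a,b)\}=\{a,b\}$ the submodularity inequality becomes, after integration, even an equality. For locality: if $|u|\wedge|v|=0$ with $u,v\in\D^+$, then $u\wedge v=0$ quasi everywhere, hence $\mu$-almost everywhere because $\mu$ does not charge polar sets, and \eqref{B2} turns this into $B(x,u(x)+v(x))=B(x,u(x))+B(x,v(x))$ $\mu$-a.e. For lower semicontinuity I would take $u_n\to u$ in $\D$, pass to a subsequence attaining $\liminf_n\psi(u_n)$ and, via \cref{thm:D_convergnes_implies_pointwise_quasi_everywhere}, to a further subsequence converging pointwise quasi everywhere (hence $\mu$-a.e.), and apply Fatou's lemma together with \eqref{B3}.

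For (i) $\Rightarrow$ (ii) the plan is to reconstruct the measure one level at a time. First, from $(A2)$, monotonicity and lower semicontinuity one gets $\psi(u)=\sup_{c>0}\psi(u\wedge c)$, so it is enough to build $(\mu,B)$ and verify the representation for bounded $u$; the general case follows by monotone convergence once $B(x,\cdot)$ is monotone and lower semicontinuous. For each $s>0$ and each open $O\subseteq\hat X$ I would set
\[
\mu_s(O)=\sup\bigl\{\psi(u):u\in D(\psi),\ 0\le u\le s,\ \supp u\subseteq O\bigr\}
\]
and extend $\mu_s$ by outer regularity. Using the separation theorems \cref{septheo1,septheo2} to produce disjointly supported cut-offs in $\hat\D\cap C(\hat X)$, and combining them with the lattice operations on $\D$ (\cref{thm:continuous_lattice_operations}), with locality, and with the submodularity inequality, one shows that $\mu_s$ is monotone, additive on closed sets at positive distance, countably subadditive and continuous from above on compact sets, so that it extends to a regular Borel measure on $\hat X$ (which is metrizable by $(A4)$), with $\supp\mu_s\subseteq\supp\psi$; covering a polar set by open sets carrying cut-offs of arbitrarily small $\D$-norm shows that $\mu_s$ does not charge polar sets.

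Next I would organise the family $(\mu_s)_{s>0}$. Monotonicity of $\psi$ gives $\mu_s\le\mu_t$ for $s\le t$, lower semicontinuity gives left-continuity of $s\mapsto\mu_s$, and separability $(A4)$ lets one work with a countable dense $Q\subseteq(0,\infty)$ and a countable dense subset of $D(\psi)$. After discarding a single polar set, pass to a common $\sigma$-finite measure dominating all $\mu_s$ with $s\in Q$, renormalise it to a finite measure $\mu$, set $B(x,s)=\mathrm{d}\mu_s/\mathrm{d}\mu(x)$ for $s\in Q$, and extend by $B(x,0)=0$ and $B(x,s)=\sup_{Q\ni t<s}B(x,t)$; this yields \eqref{B1}--\eqref{B4} and $\supp\mu\subseteq\supp\psi$. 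To prove $\psi(u)=\int_{\hat X}B(x,u(x))\,\mathrm{d}\mu$ for bounded $u$ I would squeeze from both sides: for "$\ge$", decompose $u$ via disjoint cut-offs and locality into finitely many pieces that are essentially at level $s_i$ on a set $A_i$ and use $\mu_{s_i}(A_i)\le\psi(\text{piece})$; for "$\le$", cover $\hat X$ by open sets on which $u$ is nearly constant, write $u=\bigvee_i(u\wedge v_i)$ with $v_i$ a level-$s_i$ cut-off supported in the corresponding open set, and use iterated submodularity together with $\psi(v_i)\le\mu_{s_i}(\cdot)$. Letting the mesh tend to $0$ and invoking $(A1)$, \cref{thm:D_convergnes_implies_pointwise_quasi_everywhere} and the lower semicontinuity of $\psi$ and of $B(x,\cdot)$ closes the bounded case, and the first reduction removes boundedness.

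The hard part, I expect, is everything surrounding the passage from the level measures $\mu_s$ to a single pair $(\mu,B)$: producing a \emph{finite} measure $\mu$ together with a \emph{jointly} measurable $B$ having the correct monotonicity and lower semicontinuity in $s$ — which forces the passage through the countable set $Q$ and separate care for the locus where $\psi$, equivalently $B(\cdot,s)$, is infinite — and proving that $\mu$ does not charge polar sets; it is precisely this absolute continuity with respect to capacity that makes the quasi-everywhere identities furnished by locality and by \cref{thm:D_convergnes_implies_pointwise_quasi_everywhere} usable under the integral sign.
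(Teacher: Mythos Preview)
Your (ii)$\Rightarrow$(i) argument matches the paper's. For (i)$\Rightarrow$(ii) you take a genuinely different route. The paper attaches to each bounded $u\in D(\psi)$ an \emph{outer} set function on compacta, $\mu_u(K)=\inf\{\psi(f):f\ge u\text{ on a neighbourhood of }K\}$, extends it via K\"onig's theory to a finite regular Borel measure with $\mu_u(\hat X)=\psi(u)$, proves that $\mu_u$ vanishes on polar sets and on $\{u=0\}$ and that $\mu_u\le\mu_v$ on $\{u\le v\}$, sets $\mu=\sum_n 2^{-n}\mu_{f_n}/(1+\mu_{f_n}(\hat X))$ for a countable $\D$-dense family $\{f_n\}\subset D(\psi)$, and finally defines $B(x,s)=\sup_n (\mathrm{d}\mu_{f_n}/\mathrm{d}\mu)(x)\,1_{\{f_n<s\}}(x)$. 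Your proposal instead builds \emph{level} measures $\mu_s$ by an inner (supremum) construction and recovers $B(\cdot,s)$ as $\mathrm{d}\mu_s/\mathrm{d}\mu$. The level-measure idea is closer to classical Daniell--Choquet integration and is conceptually transparent; the paper's function-indexed construction is tailored to the fact that $D(\psi)$ may be poor in step-function-like objects.

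Two gaps in your sketch deserve attention. First, the claim that $(A4)$ makes $\hat X$ metrizable is unsupported: separability of $\D$ in $\|\cdot\|_\D$ does not give separability of $C(\hat X)$ in $\|\cdot\|_\infty$, and the paper never invokes metrizability, relying instead on K\"onig's extension framework for general Hausdorff spaces. Second, and more seriously, your squeezing argument decomposes $u$ into ``pieces at level $s_i$ with disjoint supports'' in order to invoke locality, but locality requires $|u_i|\wedge|u_j|=0$, and the level sets of a generic $u\in\D$ do not produce disjointly supported pieces lying in $\D$ (indicator functions of level sets are typically not in $\D$, and multiplication by cut-offs need not preserve $\D$). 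The paper avoids both difficulties precisely by indexing its measures with genuine elements of $D(\psi)$: finiteness $\mu_u(\hat X)=\psi(u)<\infty$ is then immediate, and the identification $\mathrm{d}\mu_u/\mathrm{d}\mu=B(\cdot,u(\cdot))$ follows from the comparison $\mu_u\le\mu_v$ on $\{u\le v\}$ applied along the dense family $\{f_n\}$, with no simple-function approximation required.
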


\begin{proof}[Proof of \cref{riez-markov type}  \((ii) \Rightarrow (i)\).]
We first show that \(\psi\) is lower semicontinuous. For this, let \(u\in\mathfrak{D}^+\), and let \((u_n)_n\) be a sequence in \(\mathfrak{D}^+\) converging to \(u\) in \(\mathfrak{D}\). If \(\liminf_{n\to\infty} \psi (u_n) = \infty\), then the inequality \(\psi (u) \leq \liminf_{n\to\infty} \psi (u_n)\) is clearly true. So we assume that \(\liminf_{n\to\infty} \psi (u_n)\) is finite. We can choose a subsequence of \((u_n)_n\), denoted for simplicity again by \((u_n)_n\), such that the limit inferior actually is a limit. By \cref{thm:D_convergnes_implies_pointwise_quasi_everywhere}, there is a subsequence, denoted for simplicity again by \((u_n)_n\), converging quasi everywhere to \(u\), that is, there is a polar set \(A \subseteq \hat{X}\) such that \(u_n\) converges pointwise everywhere on \(\hat{X} \setminus A\). Since \(\mu\) is absolutely continuous with respect to \(\normcap\), \(\mu(A)=0\). The lower semicontinuity of \(B\) and Fatou's lemma yield
\begin{align*}
 \psi(u) &= \int_{\hat{X}} B(x,u(x)) \; \mathrm{d}\mu 
 \\&=\int_{\hat{X} \setminus A} B(x,u(x)) \; \mathrm{d}\mu 
 \\& \leq \int_{\hat{X} \setminus A} \liminf_{n\rightarrow \infty} B(x,u_n(x)) \; \mathrm{d}\mu
\\& \leq \liminf_{n\rightarrow \infty} \int_{\hat{X} \setminus A}  B(x,u_n(x)) \; \mathrm{d}\mu \\& \leq \liminf_{n\rightarrow \infty} \int_{\hat{X}}   B(x,u_n(x)) \; \mathrm{d}\mu \\&= \liminf_{n\rightarrow \infty}  \psi(u_n).
\end{align*}
Thus, \(\psi(u)\) is lower semicontinuous. The functional is local, since the integral is local, and it is monotone, since the integral is monotone and \(B\) is monotone in the second component. In addition,
\begin{align*}
    \psi(u \wedge v)+\psi(u\vee v) &= \int_{\hat{X}} B(x,u(x) \wedge v(x)) \; \mathrm{d}\mu+\int_{\hat{X}} B(x,u(x) \vee v(x) ) \; \mathrm{d}\mu
    \\& = \int_{\{u\leq v\}} B(x,u(x)) \; \mathrm{d}\mu+ \int_{\{u>v\}} B(x,v(x)) \; \mathrm{d}\mu
    \\ &\quad +\int_{\{u \leq v\}} B(x,v(x)) \; \mathrm{d}\mu+\int_{\{u>v\}} B(x,u(x)) \; \mathrm{d}\mu \\
    &=\int_{\hat{X}} B(x,u(x)) \; \mathrm{d}\mu+\int_{\hat{X}} B(x,v(x))\; \mathrm{d}\mu \\
    &=\psi(u)+\psi(v)
\end{align*}
for every \(u,v \in \mathfrak{D}\).
\end{proof}

We prove the converse implication via several lemmas. We define a measure for every bounded \(u\in D(\psi )\) and use the Radon-Nikodym theorem to build the desired integral representation.

\begin{defn}
 Let \(u \in D(\psi)\) be a bounded function and \(K \subseteq \hat{X}\) be a compact subset. We define
\begin{align*}
 R_u(K) = \{ f \in \mathfrak{D} \setdelim f \geq u \text{ on an open neighborhood } U \text{ of } K\}.
\end{align*}
 We define the set function  \(\mu_u: \{ K\subseteq\hat{X} \setdelim K \text{ is compact}\} \rightarrow \mathbb{R}\) by 
\begin{align*}
 \mu_u(K)= \inf_{f \in R_u(K)} \psi(f).
\end{align*}
\end{defn}

We observe, that the infimum is always finite, since \(u \in R_u(K)\).

\begin{lem}\label{measure additive}
 Let \(u \in D(\psi)\) be bounded and \(K_1,K_2 \subseteq \hat{X}\) be disjoint, compact subsets. Then
\begin{align*}
 \mu_u(K_1\cup K_2)=  \mu_u(K_1)+ \mu_u(K_2).
\end{align*}
\end{lem}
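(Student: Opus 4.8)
The plan is to establish both inequalities $\mu_u(K_1 \cup K_2) \le \mu_u(K_1) + \mu_u(K_2)$ and $\mu_u(K_1 \cup K_2) \ge \mu_u(K_1) + \mu_u(K_2)$ separately, using the locality and the modularity (submodularity together with the monotonicity, giving supermodularity on the relevant sets) of $\psi$, together with the separation theorems from Section 4.

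\textbf{Subadditivity.} First I would show $\mu_u(K_1 \cup K_2) \le \mu_u(K_1) + \mu_u(K_2)$. Given $f_1 \in R_u(K_1)$ and $f_2 \in R_u(K_2)$ with $\psi(f_i)$ close to $\mu_u(K_i)$, I would like to combine them into a competitor for $K_1 \cup K_2$. Since $K_1, K_2$ are disjoint compact (hence closed) subsets of the normal space $\hat X$, I can separate them by disjoint open sets, and using the separation result (\cref{septheo3}, applied twice) choose cutoffs so that I can form $f := (f_1 \wedge g_1) \vee (f_2 \wedge g_2) \vee 0$-type combinations, or more directly $f_1 \mathbf 1 + f_2 \mathbf 1$ glued along a partition of a neighborhood. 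The cleanest route: pick open $U_1 \supseteq K_1$, $U_2 \supseteq K_2$ with $f_i \ge u$ on $U_i$ and $\overline{U_1} \cap \overline{U_2} = \emptyset$; then $f_1 \vee f_2 \ge u$ on $U_1 \cup U_2 \supseteq K_1 \cup K_2$ (assuming $u \ge 0$, which may need $u$ replaced by $u^+$ or the sign handled), so $f_1 \vee f_2 \in R_u(K_1 \cup K_2)$, and by the modular inequality $\psi(f_1 \vee f_2) \le \psi(f_1) + \psi(f_2) - \psi(f_1 \wedge f_2) \le \psi(f_1) + \psi(f_2)$. Taking infima over $f_1, f_2$ gives the bound. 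I would need to be careful that $f_1 \vee f_2 \in \mathfrak D$ (it is, by \cref{thm:continuous_lattice_operations}).

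\textbf{Superadditivity.} This is the direction I expect to be the main obstacle, because it genuinely uses locality. Let $f \in R_u(K_1 \cup K_2)$, so $f \ge u$ on an open $U \supseteq K_1 \cup K_2$. Shrink $U$ to disjoint open sets $V_1 \supseteq K_1$, $V_2 \supseteq K_2$ with $\overline{V_i} \subseteq U$ and $\overline{V_1} \cap \overline{V_2} = \emptyset$. Using \cref{septheo3} choose $h_1 \in \hat{\mathfrak D}$ with $h_1 = 1$ on $\overline{V_1}$ and $h_1 = 0$ on $\overline{V_2}$ (and $0 \le h_1 \le 1$; here one must check the hypothesis on the Dirichlet boundary point $x_0$ — if $x_0$ exists and lies in $\overline{V_1}$ a symmetric choice or the alternative case of \cref{septheo3} is used). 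Now decompose $f$ relative to this cutoff: consider $f_1 := f \wedge (\|u\|_\infty \, \eta_1)$ where $\eta_1$ is a cutoff supported near $V_1$ equal to $1$ on $K_1$, so that $f_1 \in R_u(K_1)$ has support disjoint from a neighborhood of $K_2$, and similarly $f_2$ for $K_2$, arranged so that $|f_1| \wedge |f_2| = 0$ and $f_1, f_2 \le f$ pointwise with $f_1 + f_2 \le f$ off a neighborhood of $K_1 \cup K_2$ — actually the slicker approach is: by locality $\psi(f) \ge \psi(f \cdot \mathbf 1_{W_1}) + \psi(f \cdot \mathbf 1_{W_2}) + \psi(f \cdot \mathbf 1_{(W_1 \cup W_2)^c})$ is not directly available since $\psi$ acts on $\mathfrak D$, not on products with indicators. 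So instead I would write $f = f_1 \oplus f_2 \oplus f_0$ only morally; rigorously, pick $g_i \in \mathfrak D \cap C(\hat X)$ via \cref{septheo2} with $g_i \ge u$ on a neighborhood of $K_i$ and $g_i = 0$ outside $V_i$, set $\tilde f_i := f \wedge g_i$ if $g_i \ge 0$ (monotone, so $\tilde f_i \le f$), note $\tilde f_i \in R_u(K_i)$ and $|\tilde f_1| \wedge |\tilde f_2| = 0$ since $g_1 g_2 = 0$; then by locality $\psi(\tilde f_1 + \tilde f_2) = \psi(\tilde f_1) + \psi(\tilde f_2)$, and by monotonicity $\tilde f_1 + \tilde f_2 \le f$ (the supports being disjoint makes the sum $\le f$ where each is nonzero and $= 0$ elsewhere, using $f \ge 0$ on $U$) gives $\psi(f) \ge \psi(\tilde f_1 + \tilde f_2) = \psi(\tilde f_1) + \psi(\tilde f_2) \ge \mu_u(K_1) + \mu_u(K_2)$. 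Taking the infimum over $f \in R_u(K_1 \cup K_2)$ yields superadditivity.

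The delicate points to nail down are: (a) the sign issue — $u$ need not be nonnegative, so one should either first reduce to $u \ge 0$ or carry $u^+$ and $u^-$ carefully, checking the definitions of $R_u$ and locality still apply; (b) ensuring all glued functions land in $\mathfrak D$ and not merely in $L^2$, which is handled by the lattice and algebra structure of the core together with \cref{thm:continuous_lattice_operations}; (c) verifying the hypotheses of \cref{septheo2}/\cref{septheo3} regarding the Dirichlet boundary point, since if $x_0 \in K_1$ say, the separating functions may only be available in one of the two orientations, but since we only need $g_i$ vanishing \emph{outside} $V_i$ and $\ge u$ \emph{near} $K_i$, \cref{septheo2} (which has no restriction on $x_0$) suffices for the superadditivity direction. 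I would write the argument for $u \ge 0$ bounded and remark that the general case follows, which matches how the subsequent lemmas appear to use $\mu_u$.
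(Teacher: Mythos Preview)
Your approach is essentially identical to the paper's: for superadditivity you use \cref{septheo2} to produce disjointly supported $g_i \in \mathfrak{D}$ with $g_i \ge u$ near $K_i$ and $g_i = 0$ away from $K_i$, set $\tilde f_i = f \wedge g_i$, and combine locality with monotonicity (the paper records this as $\tilde f_1 + \tilde f_2 = \tilde f_1 \vee \tilde f_2 \le f$); for subadditivity you use $h_1 \vee h_2 \in R_u(K_1 \cup K_2)$ together with the submodular inequality and $\psi \ge 0$, exactly as the paper does. Your concerns are all harmless: the sign issue dissolves because $\psi$ is defined on $\mathfrak{D}^+$ so $u \ge 0$ automatically, and you correctly identify that \cref{septheo2} (which has no restriction on the Dirichlet boundary point) is the tool needed, so point~(c) is moot.
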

\begin{proof}
Since the sets \(K_1,K_2\) are compact and have empty intersection and \(\hat{X}\) is normal, there are open sets \(O_1,O_2\), with disjoint closure, separating \(K_1,K_2\) and open sets \(U_1,U_2\) such that \(K_i \subseteq U_i \subseteq \overbar{U_i} \subseteq O_1\). To construct such a \(U_i\), we take a continuous function \(a \in C(\hat{X})\) that is \(0\) on \(K_i\) and \(1\) on \(O_i^c\) and define
\begin{align*}
    U_i = a^{-1}\left(\left(-\infty, \frac{1}{2} \right)\right).
\end{align*}
This function \(a\) exists by Urysohn's Lemma.

By \cref{septheo2}, there are \(f_1,f_2 \in \mathfrak{D}\) such that
\(f_i \geq u\) on \(U_i\) and \(f_i=0\) on \(O_i^c\). Hence \(f_1\wedge f_2=0\) and \(f_i \in R_u(K_i)\). Now let \(f \in R_u(K_1\cup K_2)\). Note that \(g_i=f\wedge f_i \in R(K_i)\) and
\begin{align*}
g_1+g_2 =g_1\vee   g_2  \leq  f.
\end{align*}
By monotonicity and locality,
\begin{align*}
 \psi( f) &\geq \psi( g_1+ g_2) \\&= \psi(g_1)+\psi(g_2) \\&\geq \mu_u(K_1)+\mu_u(K_2).
\end{align*}
Since \(f\) was arbitrary, \( \mu_u(K_1\cup K_2) \geq  \mu_u(K_1)+ \mu_u(K_2) \). In order to prove the converse inequality, we choose functions \(h_i \in R(K_i)\). Then
\begin{align*}
 \psi(h_1)+\psi(h_2) &\geq \psi( h_1 \wedge h_2 )+\psi( h_1 \vee  h_2)
\\&\geq \psi( h_1 \vee h_2)\\&\geq \mu_u(K_1\cup K_2).
\end{align*}
Again, since the \(h_i\) are arbitrary we conclude 
\begin{align*}
  \mu_u(K_1\cup K_2) \leq  \mu_u(K_1)+ \mu_u(K_2).
\end{align*}
This inequality together with the reversed inequality, which we showed at the beginning of the proof, implies
\begin{align*}
    \mu_u(K_1\cup K_2) =  \mu_u(K_1)+ \mu_u(K_2).
\end{align*}
\end{proof}

\begin{lem}\label{measure monotone}
 Let \(u \in D(\psi)\) be bounded and \(K_1,K_2 \subseteq \hat{X} \) be compact subsets such that \(K_1 \subseteq K_2\). Then
\begin{align*}
 \mu_u(K_1) \leq \mu_u(K_2).
\end{align*}
\end{lem}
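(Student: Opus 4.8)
The plan is to read off the claim directly from the definition of $\mu_u$, using that the family of admissible functions $R_u(\cdot)$ is \emph{antitone} with respect to set inclusion. First I would note that if $K_1\subseteq K_2$, then every open set $U\subseteq\hat X$ containing $K_2$ also contains $K_1$. Hence, if $f\in\mathfrak D$ satisfies $f\geq u$ on an open neighborhood of $K_2$, the same neighborhood witnesses $f\geq u$ on an open neighborhood of $K_1$. In other words,
\[
R_u(K_2)\subseteq R_u(K_1).
\]

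Taking the infimum of the (single, fixed) functional $\psi$ over the smaller set $R_u(K_2)$ can only produce a value at least as large as the infimum over $R_u(K_1)$, so
\[
\mu_u(K_1)=\inf_{f\in R_u(K_1)}\psi(f)\leq\inf_{f\in R_u(K_2)}\psi(f)=\mu_u(K_2),
\]
which is exactly the assertion. Both infima are finite, since $u\in R_u(K_1)\cap R_u(K_2)$, so there is no concern about the value $+\infty$ entering the comparison.

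I expect no real obstacle here: the statement is a one-line monotonicity consequence of the definition of $\mu_u$, and in particular it uses neither the additivity established in \cref{measure additive} nor any of the separation results of Section~5. It is worth remarking that one cannot simply deduce \cref{measure monotone} from \cref{measure additive} by splitting $K_2=K_1\cup(K_2\setminus K_1)$, since $K_2\setminus K_1$ need not be compact; the direct argument via the inclusion $R_u(K_2)\subseteq R_u(K_1)$ is the natural route.
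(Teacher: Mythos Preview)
Your proposal is correct and matches the paper's own proof, which simply notes that the claim follows immediately from the inclusion \(R_u(K_2)\subseteq R_u(K_1)\). Your additional remarks about finiteness and about why one cannot argue via \cref{measure additive} are sound but not needed for the argument itself.
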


\begin{proof}
 This follows immediately from the inclusion \(R_u(K_2) \subseteq R_u(K_1)\). 
\end{proof}

\begin{lem}\label{measure submodular}
  Let \(u \in D(\psi)\) be bounded and \(K_1,K_2 \subseteq \hat{X}\) be compact subsets. Then
\begin{align*}
 \mu_u(K_1\cup K_2) +  \mu_u(K_1\cap K_2) \leq   \mu_u(K_1)+ \mu_u(K_2).
\end{align*}
\end{lem}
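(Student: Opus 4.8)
The plan is to imitate the second half of the proof of \cref{measure additive}, which becomes even shorter here since no topological separation of \(K_1\) and \(K_2\) is needed; only the lattice structure of \(\mathfrak{D}\) furnished by \cref{Thm:D_Riesz_Subspace_Norm} and the submodularity of \(\psi\) from hypothesis (i) of \cref{riez-markov type} enter.

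First I would take arbitrary competitors \(f_1 \in R_u(K_1)\) and \(f_2 \in R_u(K_2)\), say with \(f_i \geq u\) on an open neighbourhood \(U_i\) of \(K_i\); since \(u \geq 0\) we may assume \(f_i \geq 0\), so \(f_i \in \mathfrak{D}^+\). By \cref{Thm:D_Riesz_Subspace_Norm} the functions \(f_1 \vee f_2\) and \(f_1 \wedge f_2\) again belong to \(\mathfrak{D}^+\). The next step is the two membership checks: \(U_1 \cup U_2\) is an open neighbourhood of \(K_1 \cup K_2\) on which \(f_1 \vee f_2 \geq u\) (on \(U_1\) because \(f_1 \vee f_2 \geq f_1 \geq u\), and symmetrically on \(U_2\)), so \(f_1 \vee f_2 \in R_u(K_1 \cup K_2)\); and \(U_1 \cap U_2\) is an open neighbourhood of \(K_1 \cap K_2\) on which \(f_1 \geq u\) and \(f_2 \geq u\) both hold, hence \(f_1 \wedge f_2 \geq u\) there and \(f_1 \wedge f_2 \in R_u(K_1 \cap K_2)\).

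Then, by the definition of \(\mu_u\) followed by the submodularity of \(\psi\),
\begin{align*}
    \mu_u(K_1 \cup K_2) + \mu_u(K_1 \cap K_2) \leq \psi(f_1 \vee f_2) + \psi(f_1 \wedge f_2) \leq \psi(f_1) + \psi(f_2).
\end{align*}
Finally I would take the infimum over \(f_1 \in R_u(K_1)\) and, independently, over \(f_2 \in R_u(K_2)\); the left-hand side is constant, so it is bounded by \(\inf_{f_1}\psi(f_1) + \inf_{f_2}\psi(f_2) = \mu_u(K_1) + \mu_u(K_2)\), which is exactly the claim. (Alternatively one fixes \(\varepsilon\)-minimisers and lets \(\varepsilon \to 0\).)

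I do not expect a genuine obstacle. The only points needing care are that the lattice operations do not leave \(\mathfrak{D}\) — guaranteed by \cref{Thm:D_Riesz_Subspace_Norm} — and the trivial set-theoretic facts that \(U_1 \cup U_2\) and \(U_1 \cap U_2\) are neighbourhoods of \(K_1 \cup K_2\) and \(K_1 \cap K_2\) respectively. If there is a subtlety at all, it is merely bookkeeping about the effective domain: should \(\psi(f_1)\) or \(\psi(f_2)\) be infinite, the desired inequality is trivial, and otherwise submodularity forces \(\psi(f_1 \vee f_2)\) and \(\psi(f_1 \wedge f_2)\) to be finite as well.
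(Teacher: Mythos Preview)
Your argument is correct and essentially identical to the paper's own proof: pick \(h_i\in R_u(K_i)\), observe that \(h_1\vee h_2\in R_u(K_1\cup K_2)\) and \(h_1\wedge h_2\in R_u(K_1\cap K_2)\), apply the submodularity inequality for \(\psi\), and then take the infimum over the \(h_i\). The paper simply records the chain \(\psi(h_1)+\psi(h_2)\geq\psi(h_1\wedge h_2)+\psi(h_1\vee h_2)\geq\mu_u(K_1\cap K_2)+\mu_u(K_1\cup K_2)\) without spelling out the neighbourhood bookkeeping that you add.
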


\begin{proof}
 Let \(h_i \in R_u(K_i)\). Then
\begin{align*}
 \psi(h_1)+\psi( h_2) &\geq \psi( h_1 \wedge h_2)+\psi ( h_1 \vee  h_2)
\\& \geq \mu_u(K_1\cap K_2)+\mu_u(K_1\cup K_2) .
\end{align*}
Taking the infimum over all \(h_i \in R(K_i)\) implies the claim
\end{proof}

\begin{lem}\label{downward continuous}
 Let \(u \in D(\psi)\) be bounded and let \(\mathcal{M}\) be a downward directed family of compact subsets of \(\hat{X}\), that is, for every \(U,V \in \mathcal{M}\) there is a set \(W \in \mathcal{M}\) with \(W \subseteq U,V\). Then, for \(K= \bigcap_{A \in \mathcal{M}} A\),
\begin{align*}
 \inf_{A \in \mathcal{M}} \mu_u(A)=\mu_u(K).
\end{align*}
\end{lem}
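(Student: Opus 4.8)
The plan is to prove the downward continuity of $\mu_u$ along the directed family $\mathcal M$ by combining the outer-regularity type estimate coming from compactness of $\hat X$ with the definition of $\mu_u$ via the sets $R_u(A)$. By \cref{measure monotone} the net $(\mu_u(A))_{A\in\mathcal M}$ is monotone (decreasing along the directedness of $\mathcal M$), so $\inf_{A\in\mathcal M}\mu_u(A)$ exists and, since $K\subseteq A$ for every $A$, the inequality $\mu_u(K)\leq \inf_{A\in\mathcal M}\mu_u(A)$ is immediate. The whole content is the reverse inequality $\inf_{A\in\mathcal M}\mu_u(A)\leq \mu_u(K)$.

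For that, I would start from an arbitrary $f\in R_u(K)$, so $f\geq u$ on some open neighborhood $U$ of $K$. The key topological step is that, since $\hat X$ is compact Hausdorff and hence normal, the intersection $K=\bigcap_{A\in\mathcal M}A$ of the downward directed family of compacta is contained in the open set $U$; a standard compactness argument (the sets $\{A\setminus U : A\in\mathcal M\}$ form a downward directed family of compacta with empty intersection, hence one of them is already empty) shows that there is a single $A_0\in\mathcal M$ with $A_0\subseteq U$. Then $f\geq u$ on a neighborhood of $A_0$, i.e. $f\in R_u(A_0)$, so $\mu_u(A_0)\leq \psi(f)$, whence $\inf_{A\in\mathcal M}\mu_u(A)\leq \psi(f)$. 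Taking the infimum over all $f\in R_u(K)$ gives $\inf_{A\in\mathcal M}\mu_u(A)\leq \mu_u(K)$, completing the proof.

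I would phrase the compactness step carefully: fix $f\in R_u(K)$ with witnessing open neighborhood $U\supseteq K$. Each $A\setminus U$ is compact (closed subset of the compact $A$), and $\bigcap_{A\in\mathcal M}(A\setminus U)=K\setminus U=\emptyset$. The family $\{A\setminus U:A\in\mathcal M\}$ still has the finite intersection-type directedness from $\mathcal M$ (given $A,B$ choose $W\subseteq A\cap B$ in $\mathcal M$, then $W\setminus U\subseteq (A\setminus U)\cap(B\setminus U)$), so if all of them were nonempty the finite intersection property would force the total intersection to be nonempty, contradiction; hence $A_0\setminus U=\emptyset$ for some $A_0$, i.e. $A_0\subseteq U$.

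The main obstacle, and essentially the only nontrivial point, is exactly this passage from "$K$ is covered by $U$" to "some single member $A_0$ of the family is already covered by $U$"; everything else is bookkeeping with the definitions of $R_u$ and $\mu_u$ and the monotonicity from \cref{measure monotone}. One should also note in passing that $\mu_u(K)$ itself is finite (indeed $u\in R_u(K)$ so $\mu_u(K)\leq\psi(u)<\infty$), so no $\infty-\infty$ issues arise and the infima are genuine real numbers.
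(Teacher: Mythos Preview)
Your proof is correct and follows essentially the same approach as the paper: both directions use monotonicity for the easy inequality, and for the reverse inequality both fix \(f\in R_u(K)\) with open witness \(U\supseteq K\), use compactness (the finite intersection property) together with downward directedness to produce a single \(A_0\in\mathcal M\) with \(A_0\subseteq U\), and conclude \(f\in R_u(A_0)\). The only cosmetic difference is that the paper phrases the compactness step via \(\bigcap_{A\in\mathcal M}A\cap U^c=\emptyset\) and then extracts finitely many \(A_1,\dots,A_n\) before invoking directedness, whereas you work directly with the directed family \(\{A\setminus U\}\); these are the same argument.
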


\begin{proof}
 Since, for every \(A\in \mathcal{M}\), \(K \subseteq A\) and, by \cref{measure monotone}, \(\mu_u\) is monotone, \(\inf_{A \in \mathcal{M}} \mu_u(A) \geq \mu_u(K)\). For the converse inequality, let us choose a function \(f \in R_u(K)\) and an associated open neighborhood \(U\) of \(K\) such that \(u \leq f\) on \(U\). Then,
 \begin{align*}
     \emptyset= K\cap U^c = \bigcap_{A \in \mathcal{M}} A \cap U^c .
 \end{align*}
 Note that every \(A \in \mathcal{M}\) and \(U^c\) are closed sets. Since \(\hat{X}\) is a compact Hausdorff space, \(\hat{X}\) has the finite intersection property. That is, if \(\mathcal{C}\) is a collection of closed sets and \(\bigcap \mathcal{C} = \emptyset\), then there are finitely many \(A_1,\dots, A_n\) such that \(\bigcap_{i=1}^n A_i = \emptyset\). Hence, there are finitely many \(A_1,\dots,A_n \in \mathcal{M}\) such that
 \begin{align*}
     \emptyset= K\cap U^c = \bigcap_{A \in \mathcal{M}} A \cap U^c = \bigcap_{i=1}^n A_i \cap U^c.
 \end{align*}
 Since \(\mathcal{M}\) is downward directed and there are only finitely many \(A_i\), there exists a set \(A \in \mathcal{M}\) such that \(A \subseteq \bigcap_{i=1}^n A_i\). Thus,
 \begin{align*}
     \emptyset = \bigcap_{i=1}^n A_i \cap U^c \supseteq A \cap U^c.
 \end{align*}
 Therefore \(A \subseteq U\). Since \(A\) is compact, \(U\) is an open neighborhood of \(A\) and \(f \in R_u(A)\). This yields 
 \begin{align*}
     \inf_{A' \in \mathcal{M}} \mu_u(A') \leq \mu_u(A) \leq \psi(f).
 \end{align*}
 Since \(f\) was arbitrary, 
 \begin{align*}
     \inf_{A' \in \mathcal{M}} \mu_u(A') \leq \mu_u(K),
 \end{align*}
 which implies the claim.
\end{proof}

\begin{lem}
Let \(u \in D(\psi)\) be bounded. The set function \(\mu_u\) can be uniquely extended to a finite regular Borel measure on \(\hat{X}\) such that \(\mu_u(\hat{X})=\psi(u)\) and \(\supp(\mu_u)\subseteq \supp(\psi)\).
\end{lem}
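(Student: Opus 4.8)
The strategy is the classical route from a submodular, monotone, downward-continuous set function on compacta to a Radon measure, following the Choquet/Kindler construction as adapted in the Riesz representation literature. I have collected in Lemmas~\ref{measure additive}--\ref{downward continuous} exactly the four properties needed: additivity on disjoint compacta, monotonicity, strong subadditivity, and continuity along downward directed families. These are precisely the hypotheses under which $\mu_u$ extends to a Borel measure. Concretely, for an open set $O\subseteq\hat{X}$ I would set
\begin{align*}
 \mu_u^*(O)=\sup\{\mu_u(K)\setdelim K\subseteq O,\ K\text{ compact}\},
\end{align*}
and for an arbitrary $A\subseteq\hat{X}$
\begin{align*}
 \mu_u^*(A)=\inf\{\mu_u^*(O)\setdelim A\subseteq O,\ O\text{ open}\}.
\end{align*}
Then I would verify that $\mu_u^*$ is an outer measure, that every open (hence every Borel) set is Carathéodory-measurable with respect to it, and that $\mu_u^*$ agrees with $\mu_u$ on compacta. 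Finiteness is immediate: since $u\in R_u(\hat{X})$ we have $\mu_u(\hat X)\le\psi(u)<\infty$, so the whole construction stays within $[0,\psi(u)]$.

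The verification splits into standard pieces. Additivity of $\mu_u^*$ on opens with disjoint closures, and countable subadditivity on arbitrary opens, both follow from Lemmas~\ref{measure additive} and \ref{measure submodular} together with the compactness of $\hat X$ (any compact $K\subseteq O_1\cup O_2$ can be split as $K=K_1\cup K_2$ with $K_i\subseteq O_i$ using normality). The Carathéodory measurability of open sets is the usual argument: given a test set $A$ and an open $O$, approximate $A$ from outside by an open $U$ and use strong subadditivity of $\mu_u$ applied to suitable compacta inside $U\cap O$ and inside $U\setminus\overbar{O'}$ for $O'\Subset O$, letting $O'\uparrow O$. That $\mu_u^*(K)=\mu_u(K)$ for compact $K$ uses the outer-regular definition and Lemma~\ref{downward continuous}: the family of compact neighborhoods of $K$ is downward directed with intersection $K$, so $\inf\mu_u(\text{compact nbhd})=\mu_u(K)$, which pins down the sup-over-compacta value of any open neighborhood. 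Regularity of the resulting Borel measure is then built in by construction (inner regular by compacta on opens, outer regular by opens on all Borel sets), and uniqueness follows because two regular Borel measures agreeing on compacta agree everywhere.

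It remains to check the two identities $\mu_u(\hat X)=\psi(u)$ and $\supp(\mu_u)\subseteq\supp(\psi)$. For the first, $\mu_u(\hat X)=\inf_{f\in R_u(\hat X)}\psi(f)\le\psi(u)$ since $u\in R_u(\hat X)$; conversely, for any $f\in R_u(\hat X)$ we have $f\ge u$ on an open neighborhood of $\hat X$, i.e.\ $f\ge u$ everywhere, so monotonicity of $\psi$ gives $\psi(f)\ge\psi(u)$, whence $\mu_u(\hat X)\ge\psi(u)$. For the support inclusion, let $x\notin\supp(\psi)$; then there is an open neighborhood $U$ of $x$ with $\psi(w)=0$ for every $w\in D(\psi)$ with $\supp(w)\subseteq U$. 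Take a compact neighborhood $K\subseteq U$ of $x$ and any $f\in R_u(K)$; replacing $f$ by a localized version using Theorem~\ref{septheo2} (pick $\hat g\in\hat{\mathfrak D}\cap C(\hat X)$ with $\hat g\ge u$ on $K$ and $\hat g=0$ outside $U$, then $f\wedge\hat g\in R_u(K)$ and is supported in $\overbar U$)—more carefully, one localizes so the competitor is supported in $U$—gives $\mu_u(K)=0$, so $x\notin\supp(\mu_u)$.

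**Main obstacle.** The conceptual content is entirely in Lemmas~\ref{measure additive}--\ref{downward continuous}, which are already proved; the step here is bookkeeping. The one place that needs genuine care is the Carathéodory measurability of open sets, where strong subadditivity (Lemma~\ref{measure submodular}) must be leveraged correctly through a limiting argument over shrinking open neighborhoods and exhausting compacta—this is the classical delicate point in Riesz-type theorems, and the finite intersection property of $\hat X$ used in Lemma~\ref{downward continuous} is exactly what makes the relevant infima behave. The secondary subtlety is ensuring, in the support argument, that the competing functions can be taken with support genuinely inside $U$ rather than merely inside $\overbar U$; this is handled by choosing $K\subseteq U'\Subset U$ and applying Theorem~\ref{septheo2} with the closed sets $K$ and $\hat X\setminus U'$.
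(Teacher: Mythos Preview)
Your approach is essentially the paper's: both rest on Lemmas~\ref{measure additive}--\ref{downward continuous} to obtain additivity, monotonicity, submodularity, and downward $\tau$-continuity of $\mu_u$ on compacta, and then extend to a regular Borel measure. The only difference is packaging: the paper invokes K\"onig's extension theorems (Theorems~9.1 and~9.6 in \cite{Konig_measure_theroy}) as a black box in place of your explicit Carath\'eodory/Choquet construction, and then proves $\mu_u(\hat X)=\psi(u)$ exactly as you do. Your hands-on route is more self-contained; the paper's is shorter.

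One point worth noting: the paper's proof does not address the inclusion $\supp(\mu_u)\subseteq\supp(\psi)$ at all, whereas you sketch an argument for it. Your sketch is on the right track but needs one tightening. To conclude $\psi$ vanishes on the localized competitor you must know that competitor lies in $D(\psi)$, since the defining property of $\supp(\psi)$ is conditional on membership in $D(\psi)$. This is easily secured: take the competitor to be $u\wedge\hat g^+$ (with $\hat g$ from Theorem~\ref{septheo2} applied to a slightly larger compact set so that $\hat g\ge u$ on an open neighbourhood of $K$); then $0\le u\wedge\hat g^+\le u$ and monotonicity of $\psi$ gives $\psi(u\wedge\hat g^+)\le\psi(u)<\infty$, so the support hypothesis applies and yields $\mu_u(K)=0$.
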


\begin{proof}
 By \cref{measure additive}, \cref{measure monotone} and \cref{measure submodular},  \(\mu_u\) is a monotone, additive function defined on the compact subsets of \(\hat{X}\), which satisfies
 \begin{align*}
      \mu_u(K_1\cup K_2) \leq   \mu_u(K_1)+ \mu_u(K_2),
 \end{align*}
 for every compact sets \(K_1,K_2 \subseteq \hat{X}\). In addition, by \cref{downward continuous}, \(\mu_u\) is downward \(\tau\) continuous \cite[p. 12]{Konig_measure_theroy}. Hence, by \cite[Theorem 9.1 and 9.6]{Konig_measure_theroy}, \(\mu_u\) can be uniquely extended to a regular Radon measure on the Borel \(\sigma\)-algebra. We denote this extension again by \(\mu_u\).
 Since \(\hat{X}\) is compact and \(u \in R_u(\hat{X})\), we have \(\mu_u(\hat{X})\leq \psi(u)\). On the other hand, \(f \in R_u(\hat{X})\) implies \(f\geq u\). The monotonicity of \(\psi\) yields \(\mu_u(\hat{X}) \geq \psi(u)\). Hence, \(\mu_u(\hat{X}) = \psi(u)\).
\end{proof}

\begin{lem}\label{thm:riesz_markov_measure_monoton_in_functionarg}
Let \(u,v\in D(\psi)\) be bounded functions such that \(u\leq v\). Then \(\mu_u\leq \mu_v\).
\end{lem}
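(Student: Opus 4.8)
The plan is to reduce the inequality $\mu_u \le \mu_v$ between the extended Borel measures to an inequality between the associated set functions on compact sets, and then to prove the latter directly from the definition of $\mu_u(K)$ as an infimum over $R_u(K)$. Since both $\mu_u$ and $\mu_v$ are finite regular Borel measures obtained by the Radon extension procedure of the previous lemma, and since a finite regular Borel measure on the compact Hausdorff space $\hat X$ is determined by its values on compact sets (indeed by inner regularity $\mu(E) = \sup\{\mu(K) : K \subseteq E \text{ compact}\}$ for every Borel set $E$), it suffices to show
\begin{align*}
\mu_u(K) \le \mu_v(K) \qquad \text{for every compact } K \subseteq \hat X.
\end{align*}

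The key observation is an inclusion between the admissible classes: since $u \le v$, any $f \in \mathfrak{D}$ with $f \ge v$ on an open neighborhood $U$ of $K$ also satisfies $f \ge v \ge u$ on $U$, so $R_v(K) \subseteq R_u(K)$. Taking the infimum of $\psi$ over the larger set $R_u(K)$ can only make it smaller, hence
\begin{align*}
\mu_u(K) = \inf_{f \in R_u(K)} \psi(f) \le \inf_{f \in R_v(K)} \psi(f) = \mu_v(K).
\end{align*}
This is exactly the argument already used in \cref{measure monotone}, only now the monotonicity is in the function argument rather than in the set. Combining this pointwise (on compact sets) inequality with inner regularity of both extended measures yields $\mu_u(E) \le \mu_v(E)$ for all Borel sets $E$, i.e.\ $\mu_u \le \mu_v$.

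I expect no genuine obstacle here; the only point requiring a word of care is the passage from the compact-set inequality to the Borel inequality, which relies on the regularity (inner regularity on all Borel sets, or at least the standard approximation of Borel sets by compact sets from inside together with finiteness of the measures) established in the preceding lemma via \cite[Theorems 9.1 and 9.6]{Konig_measure_theroy}. One should also note that $R_v(K)$ is nonempty (it contains $v$), so the infimum defining $\mu_v(K)$ is over a nonempty set and the inequality between infima is legitimate.
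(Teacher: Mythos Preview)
Your proof is correct and follows essentially the same approach as the paper: reduce to compact sets by regularity and compare the infima defining $\mu_u(K)$ and $\mu_v(K)$. Your argument via the inclusion $R_v(K)\subseteq R_u(K)$ is in fact slightly more direct than the paper's, which takes $f_u\in R_u(K)$, $f_v\in R_v(K)$, observes $f_u\wedge f_v\in R_u(K)$, and then uses monotonicity of $\psi$ to get $\mu_u(K)\le\psi(f_u\wedge f_v)\le\psi(f_v)$ before passing to the infimum over $f_v$.
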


\begin{proof}
 Since the measures are regular, it is enough to show \(\mu_u(K)\leq \mu_v(K)\) for every compact \(K\subseteq \hat{X}\). Let \(K\) be such a compact set, \(f_u \in R_u(K)\) and \(f_v \in R_v(K)\). Then \(f_u\wedge f_v \in R_u(K)\), and hence 
 \begin{align*}
     \mu_u(K)\leq \psi(f_u\wedge f_v) \leq \psi( f_v).
 \end{align*}
 This implies \(\mu_u(K) \leq \mu_v(K)\).
\end{proof}

\begin{lem}\label{measure convergence}
Let \((u_n)_n\) be a sequence in \(D(\psi)\) converging to \(u\in D(\psi)\) in \(\mathfrak{D}\) such that \(u_n \leq u\) for every \(n \in \mathbb{N}\). Assume that \(u\) is bounded. Then, for every measurable set \(G\),
\begin{align*}
    \lim_{n\rightarrow \infty}\mu_{u_n}(G)=\mu_u(G).
\end{align*}
\end{lem}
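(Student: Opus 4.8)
The plan is to reduce the statement to convergence of the total masses $\mu_{u_n}(\hat{X})$ together with a one-sided monotone control on compact sets, and then invoke regularity of the measures. First I would note that, since $u_n \le u$, Lemma~\ref{thm:riesz_markov_measure_monoton_in_functionarg} gives $\mu_{u_n} \le \mu_u$ as measures, so for every measurable $G$ we have $\mu_{u_n}(G) \le \mu_u(G)$; in particular $\limsup_{n} \mu_{u_n}(G) \le \mu_u(G)$. Hence it suffices to prove the reverse inequality $\liminf_{n}\mu_{u_n}(G) \ge \mu_u(G)$, and by the complementation trick $\mu_{u_n}(G) = \mu_{u_n}(\hat{X}) - \mu_{u_n}(\hat{X}\setminus G)$ it is in fact enough to show two things: (a) $\mu_{u_n}(\hat{X}) \to \mu_u(\hat{X})$, and (b) $\limsup_n \mu_{u_n}(K) \le \mu_u(K)$ is automatic while we need $\liminf_n \mu_{u_n}(G) \ge \mu_u(G)$ for all $G$; combining (a) with $\mu_{u_n} \le \mu_u$ applied to $\hat{X}\setminus G$ yields $\liminf_n \mu_{u_n}(G) = \mu_u(\hat{X}) - \limsup_n \mu_{u_n}(\hat{X}\setminus G) \ge \mu_u(\hat{X}) - \mu_u(\hat{X}\setminus G) = \mu_u(G)$. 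So the whole statement collapses to claim (a): convergence of the total masses.

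For claim (a), recall that $\mu_{u_n}(\hat{X}) = \psi(u_n)$ and $\mu_u(\hat{X}) = \psi(u)$ by the mass identity established above. Since $u_n \to u$ in $\mathfrak{D}$ and $\psi$ is lower semicontinuous (this direction of Theorem~\ref{riez-markov type} is the one we are in the middle of proving, but lower semicontinuity of $\psi$ is part of hypothesis (i), which is what we are assuming), we get $\psi(u) \le \liminf_n \psi(u_n)$, i.e. $\mu_u(\hat{X}) \le \liminf_n \mu_{u_n}(\hat{X})$. On the other hand $u_n \le u$ and monotonicity of $\psi$ give $\psi(u_n) \le \psi(u)$, hence $\limsup_n \mu_{u_n}(\hat{X}) \le \mu_u(\hat{X})$. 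Together these force $\lim_n \mu_{u_n}(\hat{X}) = \mu_u(\hat{X}) = \psi(u) < \infty$, which is claim (a).

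Finally I would reassemble the pieces: fix a measurable $G \subseteq \hat{X}$. From $\mu_{u_n} \le \mu_u$ we have $\mu_{u_n}(\hat{X}\setminus G) \le \mu_u(\hat{X}\setminus G)$ for all $n$, so $\limsup_n \mu_{u_n}(\hat{X}\setminus G) \le \mu_u(\hat{X}\setminus G)$. Using claim (a) and $\mu_{u_n}(G) = \mu_{u_n}(\hat{X}) - \mu_{u_n}(\hat{X}\setminus G)$,
\begin{align*}
\liminf_{n\to\infty} \mu_{u_n}(G)
&= \lim_{n\to\infty}\mu_{u_n}(\hat{X}) - \limsup_{n\to\infty}\mu_{u_n}(\hat{X}\setminus G) \\
&\ge \mu_u(\hat{X}) - \mu_u(\hat{X}\setminus G) = \mu_u(G),
\end{align*}
while $\mu_{u_n} \le \mu_u$ directly gives $\limsup_n \mu_{u_n}(G) \le \mu_u(G)$. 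Hence $\lim_n \mu_{u_n}(G) = \mu_u(G)$. The main obstacle, such as it is, is bookkeeping: one must be careful that all masses involved are finite (guaranteed by $\psi(u) < \infty$ since $u \in D(\psi)$) so that the subtraction $\mu_{u_n}(\hat{X}) - \mu_{u_n}(\hat{X}\setminus G)$ is legitimate and the $\limsup$/$\liminf$ arithmetic is valid; no deep measure-theoretic input beyond the already-established monotonicity Lemma~\ref{thm:riesz_markov_measure_monoton_in_functionarg} and the mass identity is needed.
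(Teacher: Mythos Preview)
Your proof is correct and follows essentially the same approach as the paper: use the monotonicity lemma to get $\mu_{u_n}\le\mu_u$ on every measurable set, use lower semicontinuity of $\psi$ together with the mass identity $\mu_v(\hat{X})=\psi(v)$ to force convergence of total masses, and then exploit additivity on $G$ and $\hat{X}\setminus G$. The paper packages this as a single chain of inequalities (adding the nonnegative term $\mu_u(G^c)-\mu_{u_n}(G^c)$ to $\mu_u(G)-\mu_{u_n}(G)$), while you separate it into total-mass convergence plus a complementation step; the content is the same. Two cosmetic remarks: your initial mention of ``invoke regularity of the measures'' is never actually used and can be dropped, and the stray reference to compact $K$ in your item~(b) is a leftover from that plan---the argument works for arbitrary measurable $G$ directly.
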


\begin{proof}
Let G be a measurable set. Since \(u_n \leq u\), we have \(\mu_{u_n}(G) \leq \mu_u(G)\) by \cref{thm:riesz_markov_measure_monoton_in_functionarg}. Therefore,
\begin{align*}
    0&\leq \limsup_{n \rightarrow \infty} \big( \mu_u(G) - \mu_{u_n}(G) \big) \\ &\leq
    \limsup_{n \rightarrow \infty} \big( \mu_u(G) - \mu_{u_n}(G)+\mu_u(G^c) - \mu_{u_n}(G^c) \big) \\& \leq \limsup_{n \rightarrow \infty} \big(\mu_u(\hat{X}) - \mu_{u_n}(\hat{X}) \big)\\
    &= \limsup_{n \rightarrow \infty} \big(\psi(u) - \psi(u_n) \big)
    \\&=\psi(u) - \liminf_{n\rightarrow \infty} \psi(u_n) \\&\leq 0 .
\end{align*}
Hence, \(\lim_{n\rightarrow \infty}\mu_{u_n}(G)=\mu_u(G)\).
\end{proof}

\begin{lem}\label{mu u asb continuity}
Let \(u \in D(\psi)\) be bounded, and let \(A \subseteq \hat{X}\) be a polar subset. Then \( \mu_u(A)=0\).
\end{lem}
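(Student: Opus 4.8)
The plan is to reduce the statement to the capacitary definition of polar sets and then use the defining infimum of $\mu_u$ as an infimum over $R_u(K)$. Fix a bounded $u\in D(\psi)$ and a polar set $A\subseteq\hat X$. Since $\mu_u$ is a regular Borel measure, it suffices to show $\mu_u(K)=0$ for every compact $K\subseteq A$; indeed a polar set is contained in a polar (in fact open, after enlarging) set, and by inner regularity $\mu_u(A)=\sup\{\mu_u(K):K\subseteq A \text{ compact}\}$. So from now on assume $A=K$ is compact and polar, i.e. $\normcap(K)=0$.

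First I would produce, for each $\varepsilon>0$, a function $g_\varepsilon\in\mathfrak D$ with $g_\varepsilon\ge 1$ on an open neighbourhood of $K$ and $\|g_\varepsilon\|_{\mathfrak D}\le\varepsilon$; this is exactly the definition of $\normcap(K)=0$, taking $g_\varepsilon\in\mathcal L_K$ with small $\mathfrak D$-norm. Then $\|u\|_\infty\, g_\varepsilon\in R_u(K)$ (it dominates $u$ on a neighbourhood of $K$, using $u$ bounded), so $\mu_u(K)\le\psi(\|u\|_\infty g_\varepsilon)$. The remaining task is to show the right-hand side tends to $0$ as $\varepsilon\to 0$. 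Since $\|g_\varepsilon\|_{\mathfrak D}\to 0$ we have $\|u\|_\infty g_\varepsilon\to 0$ in $\mathfrak D$, and $\psi$ is lower semicontinuous with $\psi(0)=0$; but lower semicontinuity only gives $\liminf\psi(\|u\|_\infty g_\varepsilon)\ge 0$, which is the wrong direction. To get an upper bound I would instead use monotonicity together with the submodularity-type inequality $\psi(f\wedge h)+\psi(f\vee h)\le\psi(f)+\psi(h)$: truncating, one has $\|u\|_\infty g_\varepsilon\le \|u\|_\infty(g_\varepsilon\wedge 1)+ \|u\|_\infty(g_\varepsilon-g_\varepsilon\wedge 1)^{+}$ is not obviously helpful, so the cleaner route is to pick the $g_\varepsilon$ already in $\mathcal L_K$ with $0\le g_\varepsilon\le 1$ (this is harmless, since truncating does not increase the $\mathfrak D$-norm by \cref{Thm:D_Riesz_Subspace_Norm}) and then to compare against a fixed dominating function: fix any $h\in R_u(K)$, e.g. $h$ obtained from \cref{septheo2}, and observe $\|u\|_\infty g_\varepsilon\wedge h\in R_u(K)$ as well, so that $\mu_u(K)\le\psi(\|u\|_\infty g_\varepsilon\wedge h)\le\psi(\|u\|_\infty g_\varepsilon)$ — still the same bound.

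The decisive step, and the one I expect to be the main obstacle, is therefore converting $\mathfrak D$-smallness of $\|u\|_\infty g_\varepsilon$ into smallness of $\psi$ evaluated there. I would handle it as follows: extract along a sequence $\varepsilon_n\to 0$, pass (by \cref{thm:D_convergnes_implies_pointwise_quasi_everywhere}) to a subsequence with $\|u\|_\infty g_{\varepsilon_n}\to 0$ quasi everywhere, and combine this with the already-established measure $\mu_{\|u\|_\infty g_{\varepsilon_n}}$; more efficiently, note that $\|u\|_\infty g_{\varepsilon_n}\ge 0$ and each is $\le$ the fixed bounded function $w:=\|u\|_\infty(g_{\varepsilon_1}\vee g_{\varepsilon_2}\vee\cdots)$ is not available, so instead apply \cref{measure convergence} with the roles reversed after a monotone rearrangement: replace $g_{\varepsilon_n}$ by $\tilde g_n:=\bigwedge_{k\le n}\|u\|_\infty g_{\varepsilon_k}$, which still lies in $R_u$-generating position for $K$ (each is $\ge$ a translate of $u$ near $K$ provided $\varepsilon_k$ are chosen so that $\|u\|_\infty g_{\varepsilon_k}\ge u$ near $K$ — true since $g_{\varepsilon_k}\ge 1$ there), is decreasing, is bounded, and converges in $\mathfrak D$ to $0$ (dominated by $\|u\|_\infty g_{\varepsilon_1}$ and $\to 0$ a.e., hence by continuity of lattice operations and (A2)–(A3) also in $\mathfrak D$); then \cref{measure convergence} gives $\mu_{\tilde g_n}(\hat X)=\psi(\tilde g_n)\to\psi(0)=0$, and since $\mu_u(K)\le\psi(\tilde g_n)$ for every $n$ (as $\tilde g_n\in R_u(K)$), we conclude $\mu_u(K)=0$. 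Finally, unwinding the reduction, $\mu_u(A)=\sup_{K\subseteq A}\mu_u(K)=0$. The subtle points to get right are (i) that the truncated, intersected $\tilde g_n$ genuinely dominate $u$ on a neighbourhood of $K$, which needs $g_{\varepsilon_k}\ge 1$ on an open set containing $K$ and $\|u\|_\infty\ge u$ pointwise, and (ii) that $\tilde g_n\to 0$ in $\mathfrak D$, for which the cleanest justification uses that $0\le\tilde g_n\le\|u\|_\infty g_{\varepsilon_1}$ together with $\|\tilde g_n\|_{\mathfrak D}\le\|u\|_\infty\|g_{\varepsilon_1}\|_{\mathfrak D}$ being already small and, more carefully, $\|\tilde g_n\|_{\mathfrak D}\le \|u\|_\infty\|g_{\varepsilon_n}\|_{\mathfrak D}\le\|u\|_\infty\varepsilon_n\to 0$ using \cref{Thm:D_Riesz_Subspace_Norm} since $\tilde g_n\le\|u\|_\infty g_{\varepsilon_n}$ and both are nonnegative — so in fact no lattice-continuity argument is needed, only the norm estimate $\|f\wedge g\|_{\mathfrak D}\le\|g\|_{\mathfrak D}$ for $0\le f$, which follows from monotonicity of the $\mathfrak D$-norm on the positive cone implicit in \cref{Thm:D_Riesz_Subspace_Norm}.
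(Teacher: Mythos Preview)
Your argument has a genuine gap at the decisive step. You invoke \cref{measure convergence} to conclude $\psi(\tilde g_n)\to\psi(0)=0$, but that lemma requires the approximating sequence to lie \emph{below} the limit, i.e.\ $u_n\le u$; here $\tilde g_n\ge 0$, so the hypothesis fails (and in fact $\tilde g_n\ge\|u\|_\infty$ on a neighbourhood of $K$, so $\tilde g_n\not\le 0$ unless $u=0$). More fundamentally, from $\tilde g_n\to 0$ in $\mathfrak D$ and the mere lower semicontinuity and monotonicity of $\psi$ you cannot deduce $\psi(\tilde g_n)\to 0$: lower semicontinuity bounds $\liminf$ from below, and monotonicity gives $\psi(\tilde g_n)\ge\psi(0)=0$, both the wrong direction. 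Your fallback norm estimate $\|f\wedge g\|_\mathfrak D\le\|g\|_\mathfrak D$ for $f\ge 0$ is also not available: \cref{Thm:D_Riesz_Subspace_Norm} only gives $\|f\wedge g\|_\mathfrak D\le\|f\|_\mathfrak D+\|g\|_\mathfrak D$ and the truncation bound, and the Luxemburg norm on $\mathfrak D$ is in general \emph{not} monotone on the positive cone (think of $H^1$).

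The paper circumvents this by never trying to make $\psi$ small directly. Instead it builds functions $f_n\to 0$ in $\mathfrak D$ with $f_n=u$ on a shrinking neighbourhood of $K$ and with nested supports, so that $(u-f_n)\wedge f_{n+1}=0$. Locality then gives
\[
\psi(u)\ \ge\ \psi\big((u-f_n)+f_{n+1}\big)\ =\ \psi(u-f_n)+\psi(f_{n+1})\ \ge\ \mu_{u-f_n}(\hat X)+\mu_u(K),
\]
and now $u-f_n\le u$ with $u-f_n\to u$ in $\mathfrak D$, so \cref{measure convergence} applies in the correct direction to yield $\mu_{u-f_n}(\hat X)\to\mu_u(\hat X)=\psi(u)$, forcing $\mu_u(K)=0$. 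The missing idea in your approach is precisely this use of locality to trade ``$\psi$ of something small'' (uncontrollable) for ``$\psi(u)$ minus $\psi$ of something approaching $u$ from below'' (controllable via lower semicontinuity).
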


\begin{proof}
Since the measure \(\mu_u\) is regular we only have to show the claim for compact sets. Let \(K\subseteq \hat{X}\) be a polar, compact set. By definition, there exists a sequence of open sets \((O_n)_n\) such that \( \normcap(O_n) \leq \frac{1}{n}\) and \(K \subseteq O_n\). Without loss of generality we assume that \( O_n \subseteq \overbar{O_n} \subseteq O_{n-1}\). Since if for some arbitrary \(n \in \mathbb{N}\) the inclusion \( O_n \subseteq \overbar{O_n} \subseteq O_{n-1}\) does not hold, there is a continuous function \(f \in C(\hat{X})\) such that \(f=1\) on \(K\) and \(f=0\) on \( O_{n-1}^c\). Then 
\begin{align*}
    O_n \cap f^{-1}\left( \left( - \infty, \frac{1}{2}\right) \right)
\end{align*} 
satisfies our assumptions. Note that such a continuous function exists, since \(\hat{X}\) is normal. 

Moreover, for every \(n \in \mathbb{N}\) we find a function \(w_n \in \mathfrak{D}\) such that \(w_n\geq 1 \) on \(O_n\) and \(\Vert w_n \Vert_\D \rightarrow 0\). 

By \cref{septheo2}, there is a sequence \((z_n)_n\) in \(\mathfrak{D}^+\) such that \(z_n \geq u \) on \(O_{n+1}\) and \(\supp(z_n) \subseteq O_n\). Let \(k \in \mathbb{N}\). We can find \(m_k \in \mathbb{N}\) such that 
\begin{align*}
    \Vert z_k \wedge \Vert u \Vert_\infty w_m \Vert_\D \leq \frac{1}{k}
\end{align*}
for every \(m\geq m_k\). Let us choose an increasing sequence \((k_n)_n\) such that \({k_n} \geq m_{k_{n-1}}\) and define \(U_n= O_{k_n}\). Then \((f_n)_n\), defined by \[f_n= u \wedge z_{k_n}  \wedge \Vert u \Vert_\infty w_{m_{k_n} }, \] is a sequence converging to \(0\) in \(\mathfrak{D}\) and, in addition, satisfies \(f_n= u\) on \(U_{n+1}\) and \(\supp(f_n) \subseteq U_n\).

This implies \((u-f_n) \wedge f_{n+1} = 0\) and \(f_n \in R_u(K)\) for every \(n \in \mathbb{N}\). We compute
\begin{align*}
    \mu_u(\hat{X})&= \psi(u) \\&\geq \psi( u - f_n +  f_{n+1}) \\&=\psi( u-f_n)+\psi( f_{n+1}) \\&\geq 
    \mu_{ u -f_n}(\hat{X}) + \mu_u(K).
\end{align*}
We observe that \(u - f_n \rightarrow u\) and  \(u-f_n \leq u\). Hence, the preceding lemma yields
\begin{align*}
\mu_u(\hat{X})\geq \lim_{n\rightarrow \infty}  \mu_{ u -f_n}(\hat{X}) + \mu_u(K) =\mu_u(\hat{X})+ \mu_u(K),
\end{align*}
or, in other words, \(\mu_u(K)=0\).
\end{proof}
Let \(f_1,f_2\) be two representatives of an element \(f \in \mathfrak{D}\). By \cref{thm:unique_qc_representatives}, \(f_1=f_2\) quasi everywhere. Hence, there is a polar set \(A\) such that \(\{f_1=0\} \cup A=\{f_2=0\} \cup A\). Thus, for an arbitrary bounded \(u \in D(\psi)\)
\begin{align*}
    \mu_u(\{f_1=0\}) &\leq \mu_u(\{f_1=0\} \cup A) \\&= \mu_u(\{f_2=0\} \cup A) \\& \leq \mu_u(\{f_2=0\}) + \mu_u(A)\\&=\mu_u(\{f_2=0\}),
\end{align*}
since \(\mu_u(A)=0\) by \cref{mu u asb continuity}. The converse inequality \(\mu_u(\{f_1=0\}) \geq \mu_u(\{f_2=0\})\) follows by symmetry. Therefore \(\mu_u(\{f_1=0\}) = \mu_u(\{f_2=0\})\) and \(\mu_u(\{f=0\})\) is well defined, since it is independent of the choice of the representative.

\begin{lem}\label{measure does not charge zero}
Let \(u \in D(\psi) \) be bounded. Then \(\mu_u(\{u=0\})=0\).
\end{lem}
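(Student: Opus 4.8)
The plan is to run the argument of \cref{mu u asb continuity} almost verbatim, with the capacity‑killing functions $\|u\|_\infty w_m$ used there (available because the set in question was polar) replaced by functions $v_m\in\D^+$ that dominate $u$ on an open neighbourhood of the relevant compact set and satisfy $\|v_m\|_\D\to 0$; such functions exist precisely because $u$ vanishes on that set and is quasicontinuous. Since $\mu_u$ is a finite regular Borel measure which by \cref{mu u asb continuity} does not charge polar sets, and since $\mu_u(\{u=0\})$ does not depend on the chosen quasicontinuous representative of $u$ (as noted above), it suffices to show $\mu_u(K)=0$ for every compact $K\subseteq\hat X$ on which the fixed representative of $u$ vanishes.

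So fix such a $K$ and first build the $v_m$. Given $\epsilon>0$, quasicontinuity of $u$ yields an open set $G$ with $\normcap(G)\le\epsilon$ such that $u|_{\hat X\setminus G}$ is continuous; since $u=0$ on the compact set $K\setminus G$, the set $V:=\{x\in\hat X\setminus G: u(x)<\epsilon\}$ is relatively open in $\hat X\setminus G$ and contains $K\setminus G$, so (using continuity of $u|_{\hat X\setminus G}$) the set $O:=V\cup G$ is an open neighbourhood of $K$. Choosing $w\in\D^+$ with $w\ge 1$ on an open neighbourhood of $G$ and $\|w\|_\D\le 2\epsilon$ (possible because $\normcap(G)\le\epsilon$), the function $v^{(\epsilon)}:=(u\wedge\epsilon)\vee(\|u\|_\infty w)$ lies in $\D^+$ by \cref{Thm:D_Riesz_Subspace_Norm}, dominates $u$ on $O$, and satisfies $\|v^{(\epsilon)}\|_\D\le\|u\wedge\epsilon\|_\D+\|u\|_\infty\|w\|_\D$, which tends to $0$ as $\epsilon\to 0$ by (A3). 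Picking $\epsilon_m\downarrow 0$, set $v_m:=v^{(\epsilon_m)}$ and $O^v_m:=O^{(\epsilon_m)}$ (which we may assume to be decreasing); then $v_m\ge u$ on the open neighbourhood $O^v_m$ of $K$ and $\|v_m\|_\D\to 0$.

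Next, as in \cref{mu u asb continuity}, fix a decreasing sequence of open sets $P_j\supseteq K$ with $\overline{P_{j+1}}\subseteq P_j\subseteq O^v_j$, and by \cref{septheo2} functions $z_j\in\D^+$ with $z_j\ge u$ on $\overline{P_{j+1}}$ and $z_j=0$ outside $P_j$. For each $k$, since $v_l\to 0$ in $\D$, (A1) gives $z_k\wedge v_l\to z_k\wedge 0=0$ in $\D$, so there is $M_k$ with $\|z_k\wedge v_l\|_\D\le 1/k$ for all $l\ge M_k$; choose $k_n$ increasing with $k_{n+1}\ge\max\{k_n+1,M_{k_n}\}$, put $l_n:=M_{k_n}$ and $f_n:=u\wedge z_{k_n}\wedge v_{l_n}$. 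Then $0\le f_n\le u$; on the open neighbourhood $Q_n:=P_{k_n+1}\cap O^v_{l_n}$ of $K$ we have $f_n=u$, so $f_n\in R_u(K)$; $\{f_n\ne 0\}\subseteq P_{k_n}$, and since $P_{k_{n+1}}\subseteq Q_n$ we obtain $\{f_{n+1}\ne 0\}\subseteq Q_n\subseteq\{u=f_n\}$, whence $(u-f_n)\wedge f_{n+1}=0$ and $f_{n+1}\le f_n$; and $f_n=u\wedge(z_{k_n}\wedge v_{l_n})$ with $\|z_{k_n}\wedge v_{l_n}\|_\D\le 1/k_n$, so $f_n\to 0$ in $\D$ by (A1). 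By monotonicity and locality of $\psi$ and $\mu_{u-f_n}(\hat X)=\psi(u-f_n)$,
\[
\psi(u)\ge\psi(u-f_n+f_{n+1})=\psi(u-f_n)+\psi(f_{n+1})\ge\mu_{u-f_n}(\hat X)+\mu_u(K),
\]
and since $u-f_n\le u$, $u-f_n\to u$ in $\D$ and $u$ is bounded, \cref{measure convergence} gives $\mu_{u-f_n}(\hat X)\to\mu_u(\hat X)=\psi(u)$; hence $\mu_u(K)\le 0$, i.e.\ $\mu_u(K)=0$, and inner regularity of $\mu_u$ finishes the proof.

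The main obstacle is the construction of the $v_m$: turning the hypotheses ``$u=0$ on $K$'' and ``$u$ quasicontinuous'' into genuine open neighbourhoods of $K$ carrying $\D$‑functions that dominate $u$ and have arbitrarily small norm. The remaining delicate point, exactly as in \cref{mu u asb continuity}, is to thread the indices $k_n,l_n$ so that the supports $P_{k_{n+1}}$ lie inside the neighbourhoods $Q_n=P_{k_n+1}\cap O^v_{l_n}$ on which $f_n$ already equals $u$ — this is what makes the telescoping identity $\psi(u-f_n+f_{n+1})=\psi(u-f_n)+\psi(f_{n+1})$ legitimate. Once these are set up, the rest is a routine transcription of the polar case.
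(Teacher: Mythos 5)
Your proof is correct, but it takes a genuinely different route from the paper's. You keep $u$ fixed and rerun the telescoping/locality machinery of \cref{mu u asb continuity}, replacing the capacity-killing functions by majorants $v^{(\epsilon)}=(u\wedge\epsilon)\vee(\Vert u\Vert_\infty w)$ of $u$ near $K$ with small $\D$-norm; the key observation — that quasicontinuity plus $u=0$ on $K$ turns $\{u<\epsilon\}$ into a genuine open neighbourhood of $K$ carrying such majorants, with $\Vert u\wedge\epsilon\Vert_\D\to0$ by (A3) — is sound, and the index-threading that makes $(u-f_n)\wedge f_{n+1}=0$ is set up correctly. The paper instead perturbs the subscript of the measure: using quasi-openness of $U_k=\{u<\tfrac1k\}$ it builds approximants $u_k=(u-\tfrac1k)^+-g_{m_k}\le u$ with $u_k\to u$ in $\D$ that vanish on an open neighbourhood of the whole set $\{u=0\}$; then $K\cap\supp(u_k)=\emptyset$, so $R_{u_k}(K)$ contains a function $u_k\wedge h$ that is identically zero, giving $\mu_{u_k}(K)=0$ outright, and \cref{measure convergence} passes to the limit. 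Both arguments exploit the same analytic fact (near $\{u=0\}$ the function $u$ is dominated by something of small $\D$-norm, up to a small-capacity correction) and both finish with \cref{measure convergence} and inner regularity; the paper's version is shorter because it sidesteps the locality/telescoping step entirely, while yours has the virtue of being a uniform template shared with the polar-set lemma. One cosmetic point: \cref{septheo2} does not assert positivity of the separating function, so you should replace $z_j$ by $z_j\vee0$ before invoking (A1) with $z_k\wedge v_l\to z_k\wedge 0=0$ — the same silent adjustment the paper makes in \cref{mu u asb continuity}.
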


\begin{proof}
Since \(u\) is quasicontinuous, the set \(U_k=\{u < \frac{1}{k}\}\) is quasi open. Hence, for a fixed \(k\), there are open sets \(O^k_n\) and functions \(f^k_n \in \mathfrak{D}\) such that \(0 \leq f^k_n\leq 1\), \(f^k_n=1\) on \(O^k_n\), \( \lim_{n\rightarrow \infty} \Vert f^k_n \Vert_\D=0\) and \(U_k \cup O^k_n\) is open. Let us fix a \(k \in \mathbb{N}\) and define 
\begin{align*}
    g_n= (u-\frac{1}{k})^+ \wedge \Vert(u-\frac{1}{k})^+\Vert_\infty f^k_{n}.
\end{align*}
Since \(f^k_n \rightarrow 0\) for \(n \rightarrow \infty\) and the pointwise infimum is continuous on \(\D\), we can find an \(m_k \in \mathbb{N}\) large enough such that
\begin{align*}
   \Vert g_{m_k} \Vert = \big\Vert (u-\frac{1}{k})^+ \wedge \Vert(u-\frac{1}{k})^+\Vert_\infty f^k_{m_k} \big\Vert_\D \leq \frac{1}{k}.
\end{align*}
Let us define
\begin{align*}
   u_{k} =  (u-\frac{1}{k})^+ - g_{m_k},
\end{align*}
which implies 
\begin{align*}
\Vert (u-\frac{1}{k})^+ -u_{k} \Vert_\D =   \Vert (u-\frac{1}{k})^+ -(u-\frac{1}{k})^+ +g_{m_k} \Vert_\D  =\Vert g_{m_k} \Vert_\D \leq \frac{1}{k}.
\end{align*}
This construction works for every \(k \in \mathbb{N}\). Thus, we have a sequence \((u_k)_k\) satisfying 
\begin{align*}
    \Vert u-u_k\Vert_\D &=\Vert u-(u-\frac{1}{k})^+ +(u-\frac{1}{k})^+ +u_k \Vert_\D 
    \\ & \leq \Vert u-(u-\frac{1}{k})^+\Vert_\D + \Vert(u-\frac{1}{k})^+ +u_k \Vert_\D
    \\ & \leq \Vert u-(u-\frac{1}{k})^+\Vert_\D +\frac{1}{k} \rightarrow 0,
\end{align*}
since \((u-\frac{1}{k})^+=u- u \wedge \frac{1}{k} \rightarrow u\) by Assumption (A3).
Therefore, \(u_k\rightarrow u\) and \(u_k\leq u \). In addition, \(u_k=0\) on \(U_k\), since \( (u-\frac{1}{k})^+ =0 \) on \( U_k\) and therefore \(g_{m_k}=0\), which in turn implies \(u_k=0\) on \(U_k\). On the other hand, \(u_k=0\) on \(O^k_{m_k}\), since \(f^k_{m_k}=1\) on \(O^k_{m_k}\) which yields \(g_{m_k}= (u-\frac{1}{k})^+\) on \(O^k_{m_k}\) and thus, \(u_k=0\) on \(O^k_{m_k}\).
Hence \(u_k=0\) on the open set \(U_k \cup O^k_{m_k}\) which is an open neighbourhood of \(\{u=0\}\).

Let \(K\subseteq \{u=0\}\) be a compact set and \(k\) be fixed. By the properties of \(u_k\), \(K \cap \supp(u_k) = \emptyset\). Therefore we can find a function \(h \in \mathfrak{D}\) such that \(h \geq u\) on some neighbourhood \(U\) of \(K\) and \(h=0 \text{ on } \supp(u_k)\). Hence, \(h \in R_u(K)\), in particular \(u_k \wedge h \in R_{u_k}(K)\), and \(u_k \wedge  h =0\). Since \(\psi(0)=0\), then \(\mu_{u_k}(K)=0\) and 
\[ 
0=\lim_{k\rightarrow \infty}\mu_{u_k}(K)=\mu_u(K).
\] 
Inner regularity implies the claim.
\end{proof}

\begin{lem}
Let \(u,v \in D(\psi)\) both be bounded. Then \(\mu_u(G)\leq \mu_v(G)\) for every Borel set \(G\subseteq \{u\leq v\}\). \label{measure inequ}
\end{lem}

\begin{proof}
At first, let \(K \subseteq U=\{u<v\}\) be compact. Since \( U\) is quasi open, there are open sets \(O_n\) and functions \(w_n \in \mathfrak{D}\), satisfying that \(U_n=U \cup O_n\) is open, \(0\leq w_n \leq 1\), \(w_n=1\) on \(O_n\) and \( \Vert w_n \Vert_\D \rightarrow 0\).
We define \(c=\max( \Vert u \Vert_\infty,\Vert v \Vert_\infty),u_n= u \wedge C(1-w_n),v_n= v \wedge C(1-w_n)\). Then \(u_n=v_n=0\) on \(O_n\) and \(u_n<v_n\) on \(U\). Hence, \(u_n\leq v_n\) on the open set \(U_n\). Since \(K\subseteq U \subseteq U_n\), we can find functions \(f_n\in \mathfrak{D}\), such that \(0 \leq f_n\leq v_n\), \(f_n=v_n\) on some open neighbourhood \(A_n \subseteq U_n\) of \(K\) and \(\supp(f_n) \subseteq U_n\).

Now, let \( h \in R_{v_n}(K)\) be arbitrary. We observe \(h \wedge f \in R_{u_n}(K)\) and
\begin{align*}
    \psi( h ) &\geq  \psi(h\wedge f) \\
    & \geq \mu_{u_n}(K).
\end{align*}
Since \(h\) was arbitrary, the preceding lemma implies 
\begin{align*}
    \mu_{v_n}(K)\leq \mu_{u_n}(K),
\end{align*}
and taking the limit yields, by inner regularity,
\begin{align*}
    \mu_u(G)\leq \mu_v(G)
\end{align*}
for every Borel set \(G\subseteq \{u< v\}\).

For the second part, let \(G \subseteq \{u\leq v\} \) and \((\lambda_n)_n\) be a sequence in \(\mathbb{R}\) increasing to \(1\). Then, by \cref{measure does not charge zero},
\begin{align*}
    \mu_u(G\cap \{v=0\})=\mu_v(G\cap \{v=0\})=0,
\end{align*}
 and
\begin{align*}
    \mu_{\lambda_n u}(G \cap \{0<v\}) \leq \mu_{v}(G \cap \{0<v\}), 
\end{align*}
since \(G \cap \{0<v\}=\{ u\leq v \text{ and } v > 0 \} \subseteq \{ \lambda_n u < v\}\) . After taking the limit \( n \rightarrow \infty\), we obtain
\begin{align*}
    \mu_u(G)&= \mu_u(G\cap \{v=0\}) + \mu_{u}(G \cap \{0<v\})
    \\& \leq \mu_v(G\cap \{v=0\}) + \mu_{v}(G \cap \{0<v\})
    \\& \leq \mu_v(G),
\end{align*}
which is the claim.
\end{proof}

From now on let \(\{f_n\setdelim n\in \mathbb{N}\}\) be a subset of \(D(\psi)\) such that \(\overbar{\{f_n\setdelim n\in \mathbb{N}\}}^{\D} \supseteq D(\psi) \). Note that such a subset exists, since we assumed that \(\D\) is separable. Without loss of generality, we may assume that the \(f_n\) are bounded. If they are not, then we may consider instead the set \( \{0 \vee f_n \wedge k \setdelim n,k \in \mathbb{N}\}\), which is still countable and dense.
We define the finite regular measure
\begin{align*}
  \mu(G)= \sum_{n=1}^\infty \frac{1}{2^n} \frac{\mu_{f_n}(G)}{1+\mu_{f_n}(\hat{X})}. 
\end{align*}
Let \(A \subseteq \hat{X}\) be polar. \cref{mu u asb continuity} shows that \(\mu_{f_n}(A)=0\) for every \(n \in \mathbb{N}\). Hence, \(\mu(A)=0\), or in other words,  \(\mu\) is absolutely continuous with respect to the norm capacity.

\begin{lem} \label{lem.abscont}
Let \(u \in D(\psi)\) be bounded. Then \(\mu_u\) is absolutely continuous with respect to \(\mu\).
\end{lem}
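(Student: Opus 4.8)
The plan is to reduce the statement to a single essential point: that if a Borel set $G$ has $\mu(G)=0$, then $\mu_u(G)=0$ for every bounded $u\in D(\psi)$. By the definition of $\mu$ as the weighted sum $\sum_n 2^{-n}\mu_{f_n}(G)/(1+\mu_{f_n}(\hat X))$, the hypothesis $\mu(G)=0$ is equivalent to $\mu_{f_n}(G)=0$ for \emph{every} $n\in\mathbb{N}$. So the real content is: if $\mu_{f_n}(G)=0$ for all $n$ in a $\D$-dense subset of $D(\psi)$, then $\mu_u(G)=0$ for an arbitrary bounded $u\in D(\psi)$. This is a density/approximation argument, and the tools for it have all been assembled in the preceding lemmas.

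First I would fix a bounded $u\in D(\psi)$ and a Borel set $G$ with $\mu(G)=0$, so $\mu_{f_n}(G)=0$ for all $n$. Choose a sequence $(f_{n_k})_k$ from the dense set with $f_{n_k}\to u$ in $\D$. The obstacle is that \cref{measure convergence} only gives convergence $\mu_{u_k}(G)\to\mu_u(G)$ under the \emph{monotonicity} hypothesis $u_k\le u$, and an arbitrary approximating sequence need not lie below $u$. The fix is to replace $f_{n_k}$ by $f_{n_k}\wedge u$: by continuity of $\wedge$ on $\D$ (Assumption (A1)) we still have $f_{n_k}\wedge u\to u\wedge u=u$ in $\D$, and now $f_{n_k}\wedge u\le u$, so \cref{measure convergence} applies and gives $\mu_{f_{n_k}\wedge u}(G)\to\mu_u(G)$. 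On the other hand $f_{n_k}\wedge u\le f_{n_k}$, so by \cref{thm:riesz_markov_measure_monoton_in_functionarg} (more precisely \cref{measure inequ} applied to the Borel set $G\subseteq\{f_{n_k}\wedge u\le f_{n_k}\}$) we get $\mu_{f_{n_k}\wedge u}(G)\le\mu_{f_{n_k}}(G)=0$, hence $\mu_{f_{n_k}\wedge u}(G)=0$ for every $k$. Passing to the limit yields $\mu_u(G)=0$.

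Finally I would phrase the conclusion in the language of absolute continuity: we have shown that for every Borel set $G$, $\mu(G)=0$ implies $\mu_u(G)=0$, which is exactly the statement that $\mu_u\ll\mu$. I expect the only subtle point to be the bookkeeping in the first paragraph — making sure that $\mu(G)=0$ genuinely forces every summand $\mu_{f_n}(G)$ to vanish (which it does, since all summands are nonnegative) — together with the swap to $f_{n_k}\wedge u$ to bring \cref{measure convergence} into play; everything else is a direct citation of the lemmas already proved. One should also note that the $f_n$ were arranged to be bounded, so that $f_{n_k}\wedge u$ is a bounded element of $D(\psi)$ (using that $D(\psi)=\D$ is a lattice and $u$ is bounded), and $\mu_{f_{n_k}\wedge u}$ is defined.
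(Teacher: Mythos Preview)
Your proof is correct and essentially identical to the paper's own argument: pick $f_{n_k}\to u$ from the dense family, pass to $u_k=f_{n_k}\wedge u$ so that $u_k\le u$ and $u_k\to u$ by (A1), then combine \cref{measure inequ} (giving $\mu_{u_k}(G)\le\mu_{f_{n_k}}(G)=0$) with \cref{measure convergence} (giving $\mu_{u_k}(G)\to\mu_u(G)$). The only slip is the parenthetical claim that ``$D(\psi)=\D$'': this equality need not hold, but what you actually need---that $f_{n_k}\wedge u\in D(\psi)$---follows from the submodularity hypothesis $\psi(a\wedge b)+\psi(a\vee b)\le\psi(a)+\psi(b)$, which forces $D(\psi)$ to be a sublattice.
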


\begin{proof}
Let \(G\) be a Borel set such that \(\mu(G)=0\). Then \(\mu_{f_n}(G)=0\) for every \(n \in \mathbb{N}\). Since \(\{f_n\setdelim n\in \mathbb{N}\}\) is dense in \(D(\psi)\), there is a subsequence \((f_{n_k})_k\) satisfying \(\lim_{k\rightarrow \infty} f_{n_k} =u \) in \(\D\). Then \(u_k=f_{n_k} \wedge u\) converges to \(u\) in \(\D\), since we assumed the lattice operations to be continuous.  The inequality \cref{measure inequ} yields
\begin{align*}
    0 \leq \mu_{u_k}(G) \leq \mu_{f_{n_k}}(G)=0.
\end{align*}
By \cref{measure convergence}, taking the limit yields the claim.
\end{proof}

By the Radon-Nikodym theorem and by the preceding \cref{lem.abscont}, for every bounded \(u \in D(\psi)\) there exists a positive, measurable function \(B_u\) such that
\begin{align*}
    \mu_u(G)=\int_G B_u(x) \; \mathrm{d}\mu .
\end{align*}

The following lemma is a direct consequence of \cref{measure convergence}, \cref{measure does not charge zero} and \cref{measure inequ}.

\begin{lem}
Let \(u_n,u,v \in D_b(\psi)\). Then:
\renewcommand{\labelenumi}{(\roman{enumi})}
\begin{enumerate}
    \item If \(u_n\leq u, u_n \rightarrow_{\D} u\), then \( B_{u_n}\rightarrow B_u\; \mu-\)almost everywhere.
    \item \(B_u=0\) \(\mu-\)almost everywhere on \(\{u=0\}\).
    \item \(B_u\leq B_v\) \(\mu-\)almost everywhere on \(\{u\leq v\}\).
\end{enumerate}
\end{lem}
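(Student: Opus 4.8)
The plan is to prove each of the three claims by a direct translation of the corresponding statement about the measures $\mu_u$ into a statement about their Radon-Nikodym densities $B_u$, using only the defining identity $\mu_u(G)=\int_G B_u\,d\mu$ and elementary measure theory. Throughout, $u_n,u,v\in D_b(\psi)$ are fixed bounded elements of the domain, and the densities $B_{u_n},B_u,B_v$ exist by \cref{lem.abscont} together with the Radon-Nikodym theorem, as noted just above.

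\begin{proof}
(ii): By \cref{measure does not charge zero}, $\mu_u(\{u=0\})=0$. Hence for every Borel set $G\subseteq\{u=0\}$ we have $\int_G B_u\,d\mu=\mu_u(G)\le\mu_u(\{u=0\})=0$, and since $B_u\ge 0$ this forces $B_u=0$ $\mu$-almost everywhere on $\{u=0\}$.

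(iii): Let $G_0=\{u\le v\}$. By \cref{measure inequ}, $\mu_u(G)\le\mu_v(G)$ for every Borel $G\subseteq G_0$, that is $\int_G B_u\,d\mu\le\int_G B_v\,d\mu$ for all such $G$. Equivalently, $\int_G (B_v-B_u)\,d\mu\ge 0$ for every Borel $G\subseteq G_0$. Applying this with $G=G_0\cap\{B_v<B_u\}$ yields $\int_{G_0\cap\{B_v<B_u\}}(B_v-B_u)\,d\mu\ge 0$, while the integrand is strictly negative on the set of integration; hence $\mu(G_0\cap\{B_v<B_u\})=0$, i.e.\ $B_u\le B_v$ $\mu$-almost everywhere on $\{u\le v\}$. (Here one uses that $B_u,B_v$ are $\mu$-integrable, since $\mu_u,\mu_v$ are finite, so the difference $B_v-B_u$ is well defined $\mu$-a.e.)

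(i): Suppose $u_n\le u$ and $u_n\to u$ in $\D$. By \cref{measure convergence}, $\mu_{u_n}(G)\to\mu_u(G)$ for every measurable $G$; in particular $\int_G B_{u_n}\,d\mu\to\int_G B_u\,d\mu$ for every Borel set $G$. Moreover, by (iii) applied to the pair $u_n\le u$, we have $B_{u_n}\le B_u$ $\mu$-a.e., so the sequence $(B_{u_n})_n$ is dominated by the $\mu$-integrable function $B_u$. To conclude $\mu$-a.e.\ convergence, note first that $\int_{\hat X}(B_u-B_{u_n})\,d\mu=\mu_u(\hat X)-\mu_{u_n}(\hat X)\to 0$, and $B_u-B_{u_n}\ge 0$; thus $B_{u_n}\to B_u$ in $L^1(\hat X,\mu)$. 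Passing to a subsequence gives $\mu$-a.e.\ convergence along that subsequence. Finally, monotonicity in the functional argument (\cref{thm:riesz_markov_measure_monoton_in_functionarg}) does not by itself force $(B_{u_n})_n$ to be monotone, so to upgrade subsequential a.e.\ convergence to full a.e.\ convergence we argue by contradiction: if $B_{u_n}\not\to B_u$ $\mu$-a.e., there is a subsequence staying a fixed distance from $B_u$ on a set of positive $\mu$-measure, but that subsequence also converges to $B_u$ in $L^1(\hat X,\mu)$ by the same argument, hence has a further subsequence converging $\mu$-a.e.\ to $B_u$, a contradiction. Therefore $B_{u_n}\to B_u$ $\mu$-almost everywhere.
\end{proof}

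The only mildly delicate point is part (i): $L^1$-convergence only yields \emph{subsequential} almost-everywhere convergence, so the full a.e.\ statement requires the standard subsequence-of-every-subsequence argument sketched above; everything else is a routine unwinding of the Radon-Nikodym identity against the measure-level lemmas already proved.
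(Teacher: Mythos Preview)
Your arguments for (ii) and (iii) are correct and are exactly the ``direct consequence'' the paper has in mind: you simply translate \cref{measure does not charge zero} and \cref{measure inequ} through the Radon--Nikodym identity.

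For (i), however, your contradiction argument is flawed. The assertion ``if $B_{u_n}\not\to B_u$ $\mu$-a.e., there is a subsequence staying a fixed distance from $B_u$ on a set of positive $\mu$-measure'' is false: almost-everywhere convergence is not metrizable, and the subsequence-of-every-subsequence principle does not apply to it. The typewriter sequence $g_n=\mathbf{1}_{I_n}$ on $[0,1]$ (indicators of shrinking dyadic intervals sweeping across $[0,1]$) satisfies $0\le g_n\le 1$, $\int_G g_n\,dx\to 0$ for \emph{every} Borel $G$, hence $g_n\to 0$ in $L^1$ and every subsequence has a further subsequence converging a.e.\ to $0$; yet $g_n\not\to 0$ a.e., and no subsequence stays uniformly away from $0$ on a fixed positive-measure set. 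Thus from \cref{measure convergence} and (iii) you only obtain $B_{u_n}\to B_u$ in $L^1(\mu)$ (equivalently, in $\mu$-measure), not the full $\mu$-a.e.\ statement.

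What actually suffices for the paper's purposes is the monotone case: in the proof of \cref{riez-markov type} the only instance of (i) invoked is for $u_n=\lambda_n u$ with $\lambda_n\uparrow 1$, where (iii) gives that $(B_{\lambda_n u})_n$ is $\mu$-a.e.\ increasing and bounded by $B_u$, hence converges $\mu$-a.e.\ to some $g\le B_u$; monotone convergence and $\int B_{\lambda_n u}\,d\mu=\mu_{\lambda_n u}(\hat X)\to\mu_u(\hat X)=\int B_u\,d\mu$ then force $g=B_u$ $\mu$-a.e. If you want to salvage (i) in the generality stated, you need either an additional monotonicity hypothesis on $(u_n)$ or an independent argument; the three measure-level lemmas alone do not yield it.
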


For every \(x \in \hat{X}\), we define \( I(x)=\textup{convexhull}\{f_n(x),n\in \mathbb{N}\} \) and the function
\begin{align*}
   B(x,s)= 
   \begin{cases}
   \sup_{n\in \mathbb{N}} B_{f_n}(x) 1_{\{f_n<s\}}(x) & \text{if } s \in I(x) \\
    + \infty & \text{else}
   \end{cases}
\end{align*}
for every \(s \in \mathbb{R}^+\).

We are now finally ready to prove the converse direction in our theorem.

\begin{proof}[Proof of \cref{riez-markov type} \((i) \Rightarrow (ii)\)]
At first, we notice that, since the functions \(f_n\), which we have chosen above, are positive for every \(n \in \mathbb{N}\), the sets \(\{f_n<0\}\) are empty for every \(n \in \mathbb{N}\). Hence, \(1_{\{f_n<0\}} =0\), which implies \(B(x,0)=0\) for every \(x \in \hat{X}\). Additionally, the sets \(\{f_n<s\}\) are increasing with \(s\) for every \(n\in \mathbb{N}\). Thus \(B(x,\cdot)\) is monotone.

We prove the lower semicontinuity of \(B(x, \cdot)\). Let \(x \in \hat{X},s\in \mathbb{R}^+\). By the definition of \(B\), for every \(\epsilon >0\) there exists an \(m \in \mathbb{N}\) such that \(f_m(x)<s\) and
\begin{align*}
    B(x,s)-\epsilon\leq B_{f_m}(x) \leq B(x,s).
\end{align*}
This implies \(B(x,s)-\epsilon\leq B(x,t) \leq B(x,s)\) for every \(f_m(x) < t \leq s\). Obviously, \(B(x,s)-\epsilon\leq B(x,t) \) if \(t>s\). Therefore we found a neighborhood \(U\) of \(s\) such that \(B(x,s)-\epsilon\leq B(x,t) \) for every \(t \in U\). As a consequence, \(s \mapsto B(x,s)\) is lower semicontinuous for every \(x \in \hat{X}\).

To show the last part of the claim we consider the case of a bounded function \(u \in D_b(\psi)\) first. For each \(n\in \mathbb{N}\), we have \(B_{f_n}\leq B_u\) for \(\mu\)-almost every \(x \in \{f_n\leq u\}\). Hence, 
\begin{align}\label{eqn:ralph_stern}
    B( \cdot, u(\cdot))\leq B_u(\cdot) \quad \mu \text{-almost everywhere.}
\end{align}

For the converse inequality, we first note that \(B_u(x)=0\) for \(\mu\)-almost all \(x\in \{u=0\}\), and thus \(B_u(x)=0=B(x,0)=B(x,u(x))\) for \(\mu\)-almost all \(x\in \{u=0\}\).
On the other hand, let \((\lambda_m)_{m \in \mathbb{N}}\) be a sequence in \(\mathbb{R}\) strictly increasing to \(1\). Then there exists a subsequence \(f_{n^m_k}\) converging to \(\lambda_m u\) in \(\mathfrak{D}\) and additionally quasi everywhere for every \(m \in \mathbb{N}\). Since \(\mu\) does not charge polar sets, the sequence converges even \(\mu\)-almost everywhere and 
\begin{align*}
    u(x) > \lim_{k \rightarrow \infty} f_{n^m_k} (x)=\lambda_m u(x) > \lambda_{m-1} u(x)
\end{align*}
for \(\mu\)-almost every \(x\in \{u>0\}\). Hence,
\begin{align*}
    B(x,u(x))&= \sup_{f_n(x)<u(x)}B_{f_n}(x) \\
    & \geq \limsup_{k \rightarrow \infty} B_{f_{n^m_k}}(x) 
    \\ &\geq B_{\lambda_{m-1}u}(x)
\end{align*}
for \(\mu\)-almost every \(x\in \{u>0\}\). We already showed \( B_{\lambda_{m}u}(x) \rightarrow B_u(x)\) \(\mu\)-almost everywhere. Thus, we have shown 
\begin{align*}
    B(x,u(x)) \geq B_u(x).
\end{align*}
Therefore, together with \eqref{eqn:ralph_stern} this implies \( B(x,u(x)) = B_u(x)\) \(\mu\)-almost everywhere. Hence,
\begin{align*}
    \psi(u)=\int B_u(x) \; \mathrm{d}\mu= \int B(x,u(x)) \; \mathrm{d}\mu.
\end{align*}
For an arbitrary \(u \in D(\psi)\), we observe that \(\lim_{n\rightarrow \infty} \psi(u \wedge n) =\psi(u)\) by the lower semicontinuity and monotonicity of \(\psi\) and, since \(B(\cdot,\cdot)\) is monotone and lower semicontinuous in the second argument, \(B(x,u(x))=\lim_{n \rightarrow\infty} B(x,u(x)\wedge n) \) \(\mu\)-almost everywhere.
Therefore, the monotone convergence theorem yields
\begin{align*}
    \psi(u)&=\lim_{n\rightarrow\infty}\psi(u \wedge n) \\
    & = \lim_{n\rightarrow\infty} \int_{\hat{X}} B(x,u(x)\wedge n) \; \mathrm{d}\mu \\ 
    & = \int_{\hat{X}} \lim_{n\rightarrow\infty} B(x,u(x)\wedge n) \; \mathrm{d}\mu \\ 
    & = \int_{\hat{X}} B(x,u(x)) \; \mathrm{d}\mu,
\end{align*}
which is our claim.
\end{proof}

\begin{remark}\label{rieszremark}
Let  \(\psi_1,\psi_2\) be two functionals satisfying all assumptions of the theorem. There are measures \(\mu_1\) and \(\mu_2\) given by the previous theorem for \(\psi_1\) and \(\psi_2\), respectively.
The proof also shows one can find a common measure \(\nu\), for example \(\nu = \mu_1+\mu_2\) if \(\mu_1\) and \(\mu_2\) are the representing measures for \(\psi_1\) and \(\psi_2\), respectively, and functions \(B_{1,\nu},B_{2,\nu}\), as in the theorem, such that
\begin{align*}
    \psi_i(u)=\int_{\hat{X}} B_{i,\nu}(x,u(x)) d\nu,
\end{align*}
or in other words, for finitely many functionals one can always choose a common representing measure.
\end{remark}

\section{Domination of local sub-Markovian semigroups}

\begin{defn}
We call a function \(f: \mathbb{R} \rightarrow [0,\infty]\) \underline{bi-monotone} if 
\(f\) is monotone decreasing on \((-\infty,0)\) and monotone increasing on \( (0,\infty)\).
\end{defn}

\begin{thm}\label{main theo}
Let \(\mathcal{E}: \lspace \rightarrow [0,\infty]\) be a quasilinear, symmetric, local, regular Dirichlet form with core \(\hat{\mathcal{A}}\), fulfilling the assumptions (A1) to (A4) and, for every \(u,v \in D(\mathcal{E})\),
\begin{align*}
    \mathcal{E}(u \wedge v) + \mathcal{E}(u \vee v)=\mathcal{E}(u)+\mathcal{E}(v).
\end{align*}
Let \(T\) be the Markovian semigroup generated by \(\mathcal{E}\) and \(S\) a semigroup generated by a convex, lower semicontinuous functional \( \F : \lspace \rightarrow [0,\infty]\). Then \(\F\) is local, \(S\) is order preserving and \(S\) is dominated by \(T\) in the sense that
\begin{align*}
    \vert S_t(u)\vert  \leq  T_t(\vert u \vert ) \text{ for every } t \geq 0 \text{ and every } u \in \lspace,
\end{align*}
if and only if there exists a finite regular Borel measure \(\mu\) on \(\hat{X}\), which is absolutely continuous with respect to  \(\normcap\) on \(\hat{X}\), and a Borel function \(B: \hat{X} \times \mathbb{R} \rightarrow [0,\infty ] \) satisfying
\begin{align*}
 & B(.,s) \text{ is measurable } & \text{ for every } s \in \mathbb{R}, \\
& B(x,0)=0 & \text{ for }\mu \text{-a.e. } x \in \hat{X},\\
& B(x,.) \text{ is lower semicontinuous} & \text{ for }\mu \text{-a.e. } x \in \hat{X}, \\
& B(x,.) \text{ is bi-monotone } & \text{ for }\mu \text{-a.e. } x \in \hat{X}, \\
\end{align*}
such that for all \(u \in D(\F)\)
\begin{align*}
 \F (u)=\mathcal{E}(u)+\int_{\hat{X}} B(x,u(x)) \; \mathrm{d}\mu.
\end{align*}
\end{thm}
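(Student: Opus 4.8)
The plan is to split the equivalence into its two implications, and in each direction to reduce the statement about semigroup domination to a purely variational statement about the functionals $\E$ and $\F$, which we then feed into the Riesz–Markov type representation theorem (Theorem~\ref{riez-markov type}). For the ``if'' direction, suppose $\F(u) = \E(u) + \int_{\hat X} B(x,u(x))\,\mathrm{d}\mu$ with $B$ bi-monotone, lower semicontinuous, vanishing at $0$ and $\mu$ absolutely continuous with respect to $\normcap$. One first checks that $\F$ is a Dirichlet form: it is convex and lower semicontinuous because $\E$ is and because $u \mapsto \int B(x,u(x))\,\mathrm{d}\mu$ inherits lower semicontinuity from Fatou plus the quasi-everywhere convergence of $\D$-convergent sequences (exactly as in the $(ii)\Rightarrow(i)$ half of Theorem~\ref{riez-markov type}), and properties \eqref{diricheqn1} and \eqref{diricheqn2} are verified by the pointwise computation: for the lattice inequality one uses that $B(x,\cdot)$ is bi-monotone so that $B(x, a\wedge b) + B(x, a\vee b) \le B(x,a)+B(x,b)$ pointwise — here one distinguishes the sign cases of $a,b$ — and similarly for the truncation property \eqref{diricheqn2}, since the operation $\tfrac12((u-v+\alpha)_+-(u-v-\alpha)_-)$ is a pointwise contraction towards the diagonal. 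Then to get domination we verify the Barthélemy criterion of Theorem~\ref{thm_characterisation_domination_and_abseqn}: we must show
\[
 \F\big((|u|\wedge v)\,\mathrm{sgn}(u)\big) + \E(|u|\vee v) \le \F(u) + \E(v)
\]
for all $u \in \lspace$, $v \ge 0$. Splitting $\F = \E + \Phi$ where $\Phi(u) = \int B(x,u(x))\,\mathrm{d}\mu$, the $\E$-part is exactly the statement that $\E$ dominates itself (the corollary after Theorem~\ref{thm_characterisation_domination_and_abseqn}, together with the modularity hypothesis $\E(u\wedge v)+\E(u\vee v)=\E(u)+\E(v)$ which gives $\E(|u|\wedge v) + \E(|u|\vee v) = \E(|u|)+\E(v) \le \E(u)+\E(v)$), and for the $\Phi$-part we need $B(x, (|t|\wedge s))\le B(x,t)$ pointwise whenever $s\ge 0$, which is immediate from bi-monotonicity since $|(|t|\wedge s)\,\mathrm{sgn}(t)| = |t|\wedge s \le |t|$ and $B(x,\cdot)$ is controlled by $|\cdot|$. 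Locality of $\F$ follows from locality of $\E$ and of the integral. Order preservation of $S$ follows from property \eqref{diricheqn1}, which we have already established.

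For the ``only if'' direction, assume $\F$ is local, $S$ is order preserving, and $S$ is dominated by $T$. The key reduction is to define, for $u \in \D^+$,
\[
 \psi(u) := \F(u) - \E(u),
\]
and to show that $\psi$ (suitably interpreted, and with domain issues handled) satisfies hypothesis (i) of Theorem~\ref{riez-markov type} on $\D^+$, so that $\psi(u)=\int_{\hat X} B(x,u(x))\,\mathrm{d}\mu$ for a measure $\mu \ll \normcap$ and a Borel function $B(x,\cdot)$ that is lower semicontinuous, monotone, and vanishes at $0$. Then one extends $B$ to all of $\mathbb{R}$ by reflecting, $B(x,t) := B(x,|t|)$, which is automatically bi-monotone, and shows $\F(u) = \E(u) + \int B(x,u(x))\,\mathrm{d}\mu$ for all $u$, not just nonnegative $u$, using that both $\F$ and $\E$ satisfy $\E(|u|)\le\E(u)$, $\F(|u|)\le\F(u)$ together with domination applied symmetrically (domination forces $\F(|u|) \ge \F(u)$ via an argument with the Barthélemy inequality, so actually $\F(|u|)=\F(u)$ and likewise the perturbation only sees $|u|$). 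The real work is verifying the four properties of $\psi$. Lower semicontinuity of $\psi$ on $\D^+$: a $\D$-convergent sequence $u_n \to u$ has a subsequence converging q.e., and one combines lower semicontinuity of $\F$ with the fact that $\E$ is actually $\D$-continuous on $\D^+$ — or, more robustly, one argues $\liminf(\F(u_n)-\E(u_n)) \ge \F(u) - \limsup \E(u_n) \ge \F(u)-\E(u)$, where $\limsup\E(u_n)\le$ something needs care; I expect this to require the modularity/quasilinearity of $\E$ to pin down $\E(u_n)\to\E(u)$ along a suitable subsequence. Monotonicity of $\psi$ and the submodular (lattice) inequality $\psi(u\wedge v) + \psi(u\vee v) \le \psi(u)+\psi(v)$: these should follow by combining the Barthélemy domination inequality (Theorem~\ref{thm_characterisation_domination_and_abseqn}) with the modularity equation for $\E$. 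Concretely, the domination inequality with the roles of $\E,\F$ gives, for $0 \le u \le v$, that $\F(u) + \E(v) \ge \F(v) + \E(u)$ after choosing the test functions appropriately, i.e. $\psi(u) \le \psi(v)$; and for the lattice inequality, applying Barthélemy's criterion to the pair $u\wedge v$, $u\vee v$ and using $\E(u\wedge v)+\E(u\vee v)=\E(u)+\E(v)$ should yield $\F(u\wedge v)+\F(u\vee v)\le \F(u)+\F(v)$, hence the same for $\psi$. Locality of $\psi$ is inherited directly from locality of $\F$ and of $\E$.

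The main obstacle I anticipate is handling the domain mismatch and the lower semicontinuity of $\psi = \F - \E$ cleanly: $\E$ and $\F$ are both extended-real-valued, the difference is only well-defined where $\E(u) < \infty$, and subtracting a lower semicontinuous functional does not in general preserve lower semicontinuity. The resolution should exploit that $\E$ restricted to $\D$ (equivalently, via the hypothesis that $D(\E) = \D$, or at least that the relevant functions are in $\D$) behaves well under $\D$-convergence — in particular the modularity equality $\E(u\wedge v) + \E(u \vee v) = \E(u) + \E(v)$ (quasilinearity) is what makes $\E$ essentially ``additive'' and should give $\E$ a much stronger continuity property along monotone approximations than mere lower semicontinuity. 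A clean way to organize this: first prove the representation for bounded $u$ using truncations $u\wedge n$ and the machinery of $\mu_u$ built in Lemmas~\ref{measure additive}--\ref{measure inequ} applied to $\psi$ rather than redeveloping it, then pass to general $u$ by monotone convergence exactly as at the end of the proof of Theorem~\ref{riez-markov type}. A secondary obstacle is extending from $u\ge 0$ to signed $u$: one must show that the perturbation $\F - \E$ depends only on $|u|$, which I would derive from the two-sided consequence of domination together with locality (decompose $u = u^+ - u^-$ with $u^+ \wedge u^- = 0$ and use locality of both $\F$ and $\E$ to reduce to the nonnegative case on each piece), yielding the bi-monotone $B(x,t) = B(x,|t|)$.
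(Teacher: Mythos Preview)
Your overall architecture is right and closely parallels the paper's: reduce domination to Barth\'elemy's inequality, define $\psi = \F - \E$, verify the hypotheses of Theorem~\ref{riez-markov type}, and then glue. The ``if'' direction is essentially correct (though checking property~\eqref{diricheqn2} for $\F$ is unnecessary---the theorem only asks for order preservation, i.e.\ property~\eqref{diricheqn1}). Your worry about lower semicontinuity of $\psi$ is resolved more simply than you suggest: quasilinearity of $\E$ means $\E$ is \emph{continuous} on $\D$, so $\psi = \F - \E$ is lower semicontinuous without any subsequence gymnastics.

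There is, however, a genuine gap in your extension from $\D^+$ to signed functions. You propose to set $B(x,t) := B(x,|t|)$ and justify this by claiming that domination forces $\F(|u|) = \F(u)$, so ``the perturbation only sees $|u|$''. This is false in general: $\F$ is \emph{not assumed symmetric}, and bi-monotone does not mean even. For instance, $\F(u) = \E(u) + \int_{\hat X} (u^+)^2\,\mathrm{d}\mu$ satisfies all the hypotheses (local, order preserving, dominated), yet $\F(-u) \neq \F(u)$ and the corresponding $B(x,t)$ vanishes for $t<0$. Neither of your two claimed inequalities is available: $\F(|u|) \le \F(u)$ requires symmetry of $\F$ (the corollary after Theorem~\ref{thm_characterisation_domination_and_abseqn} uses symmetry of $\E$ explicitly), and I do not see how Barth\'elemy's inequality yields $\F(|u|) \ge \F(u)$---plugging $v = |u|$ gives a tautology.

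The paper handles this by applying Theorem~\ref{riez-markov type} \emph{twice}: once to $\psi_1(u) = \F(u) - \E(u)$ and once to $\psi_2(u) = \F(-u) - \E(-u) = \F(-u) - \E(u)$ (using symmetry of $\E$ only), both on $\D^+$. The Barth\'elemy inequality with $u = f$, $v = -g$ for $f \le g \le 0$ gives monotonicity of $\psi$ on the negative cone, hence monotonicity of $\psi_2$ on $\D^+$. One obtains possibly different $B_1$, $B_2$ (with a common measure via Remark~\ref{rieszremark}) and glues them into $B(x,s) = B_1(x,s)$ for $s \ge 0$, $B(x,s) = B_2(x,-s)$ for $s < 0$; the final identity $\psi(u) = \psi(u^+) + \psi(-u^-)$ then comes from locality of $\F$ and $\E$, with no symmetry of $\F$ needed.
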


\begin{remark}
Since \(\mathcal{E}\) is quasilinear, \cref{thm:continuous_lattice_operations} implies (A1), (A2) and (A3).
\end{remark}

\begin{proof} 
First, let us assume that \(\F\) is local, \(S\) order preserving and \(S\) is dominated by \(T\). By \cref{thm_characterisation_domination_and_abseqn}, domination of \(S\) by \(T\) implies that
 \begin{align*}
 \F( ( \vert u \vert \wedge v ) \textup{sgn}(u)) + \mathcal{E}( \vert u \vert \vee v) \leq \F(u)+  \mathcal{E}(v) \quad (u,\, v\in\lspace ).
 \end{align*}
Let \(f \in D(\F)\). Choosing \(v=0\) and \(u=f^+\) in the previous inequality yields 
\begin{align*}
    \E(f^+) \leq \F(f^+).
\end{align*}
Choosing \(v=0\) and \(u=f^-\) yields
\begin{align*}
    \E(f^-) \leq \F (f^-).
\end{align*}
Hence, \(f^+,f^- \in \D\) and therefore \(f\in \D\), the Dirichlet space associated with \(\E\). Thus
\begin{align*}
    D(\F )\subseteq \mathfrak{D}.
\end{align*}
Hence, the functional \(\psi: \mathfrak{D} \rightarrow [0,\infty]\) given by
\begin{align*}
    \psi(u)=\begin{cases} \F(u)-\mathcal{E}(u) &\text{ if }u \in D(\F), \\ \infty &\text{ otherwise, }
    \end{cases}
\end{align*}
is well defined. As a difference of local functionals, it is local. Let \(u,v\in D(\F)\). Then
\begin{align*}
    \psi(u \wedge v) + \psi(u \vee v)&= \F(u \wedge v)-\mathcal{E}(u \wedge v)+\F(u \vee v)-\mathcal{E}(u \vee v)
    \\&= \F(u \wedge v) +\F(u \vee v)-\mathcal{E}(u) -\mathcal{E}(v) 
    \\& \leq \F(u) +\F(v)-\mathcal{E}(u) -\mathcal{E}(v)
    \\&=\psi(u)+\psi(v).
\end{align*}
For either \(u\) or \(v\) not in \(D(\F)\) the previous inequality is trivial. Hence,
\begin{align*}
    \psi(u \wedge v) + \psi(u \vee v)&\leq \psi(u)+\psi(v)
\end{align*}
for every \(u,v \in \mathfrak{D}\).
Since \(\mathcal{E}\) is quasilinear and therefore continuous on \(\mathfrak{D}\), \(\psi\) is lower semicontinuous.  By \cref{thm_characterisation_domination_and_abseqn}, the domination of the semigroups implies 
 \begin{align*}
 \F( ( \vert u \vert \wedge v ) \textup{sgn}(u)) + \mathcal{E}( \vert u \vert \vee v) \leq \F(u)+  \mathcal{E}(v)
 \end{align*}
 for every \(u,v \in \lspace\) such that \(v \geq 0\). Hence
\begin{align*}
    \F(v)-\mathcal{E}(v)\leq \F(u)-\mathcal{E}(u)
\end{align*}
for all \(u,v \in \mathfrak{D}\), such that \(0\leq v \leq u\). On the other hand, let \(f,g \in \lspace\) such that \(f \leq g \leq 0\). Setting \(u =f\) and \(v=-g\) in the inequality from \cref{thm_characterisation_domination_and_abseqn} leads to
 \begin{align*}
 \F ( - (-g) ) + \mathcal{E}( -f ) \leq \F (f)+  \mathcal{E}(-g).
 \end{align*}
Since \(\E\) is symmetric, that is  \(\mathcal{E}(h)=\mathcal{E}(-h)\) for every \(h \in \lspace\),
\begin{align*}
    \F(g)-\mathcal{E}(g)\leq \F(f)-\mathcal{E}(f).
\end{align*}
Hence \(\psi\) is monotone increasing on the cone of positive functions and monotone decreasing on the cone of negative functions.

Therefore, the functions \( \psi_1,\psi_2: \mathfrak{D}^+ \rightarrow [0,\infty]\), defined by
\begin{align*}
    \psi_1(u)=\psi(u) \text{ and }
    \psi_2(u)=\psi(-u) \quad u \in \mathfrak{D},
\end{align*}
satisfy all the assumptions of \cref{riez-markov type}. Hence, by \cref{riez-markov type} and \cref{rieszremark}, there is a measure \(\mu\) on \(\hat{X}\) and functions \(B_1,B_2: \hat{X} \times \mathbb{R}^+ \rightarrow [0,\infty]\) satisfying the conditions \eqref{B1}-\eqref{B4} from \cref{riez-markov type}, such that the integral representation
\begin{align*}
\psi_i(u)=\int_{\hat{X}}B_i(x,u(x)) \; \mathrm{d}\mu, \; i=1,2    
\end{align*} 
holds for all positive \(u\in\mathfrak{D}^+\). We define \(B:\hat{X} \times \mathbb{R} \rightarrow [0,\infty]\) by
\begin{align*}
    B(x,s)=\begin{cases} B_1(x,s) &\text{ if } s \geq 0 \\ B_2(x,-s) &\text{ otherwise. }
    \end{cases}
\end{align*}

Then \(B\) is measurable in the first argument and bi-monotone and lower semicontinuous in the second argument and, in addition, \(B(x,0)=0\) for \(\mu\)-almost all \(x \in \hat{X}\). For a fixed \(u \in D(\F)\), this yields
\begin{align*}
    \psi(u)&=\psi(u^+ - u^-)=\psi(u^+)+\psi(-u^-)=\psi_1(u^+)+\psi_2(u^-)\\
    &=\int_{\hat{X}} B_1(x,u^+(x)) \; \mathrm{d}\mu +\int_{\hat{X}} B_2(x,u^-(x)) \; \mathrm{d}\mu \\
    &=\int_{\hat{X}} B(x,u(x)) \; \mathrm{d}\mu.
\end{align*}
Therefore,
\begin{align*}
    \F(u)= \mathcal{E}(u)+\int_{\hat{X}} B(x,u(x)) \; \mathrm{d}\mu,
\end{align*}
which is the claim.

Now let us assume we have a measure \(\mu\) and a function \(B\) as in the theorem. It is easy to check, that
\begin{align*}
    \F(f)=\mathcal{E}(f)+\int_{\hat{X}} B(x,f(x))\; \mathrm{d}\mu 
\end{align*}
is local and 
\begin{align*}
    \F(u \wedge v)+\F(u \vee v) \leq \F(u)+ \F(v)
\end{align*}
for every \(u,v\) in \(\mathfrak{D}\). Hence, \(\mathcal{F}\) is local and \(S\) is order preserving. Now, by \cref{thm_characterisation_domination_and_abseqn}, we have
\begin{align*}
    \mathcal{E}( ( \vert u \vert \wedge v ) \textup{sgn}(u)) + \mathcal{E}( \vert u \vert \vee v) \leq \mathcal{E}(u)+  \mathcal{E}(v)
\end{align*}
and, by the properties of the integral and bi-monotonicity,
\begin{align*}
    \int_{\hat{X}} B(x,( \vert u \vert \wedge v ) \textup{sgn}(u)) \; \mathrm{d}\mu &=
    \int_{\{\vert u \vert \leq v\}} B(x,u) \; \mathrm{d}\mu +
    \int_{\{\vert u \vert > v\}} B(x,v \textup{sgn}(u)) \; \mathrm{d}\mu \\
    & \leq \int_{\{\vert u \vert \leq v\}} B(x,u) \; \mathrm{d}\mu +
    \int_{\{\vert u \vert > v\}} B(x,u) \; \mathrm{d}\mu\\
    &\leq \int_{\hat{X}} B(x,u) \; \mathrm{d}\mu.
\end{align*}
Both inequalities together imply
\begin{align*}
     \F( ( \vert u \vert \wedge v ) \textup{sgn}(u)) + \mathcal{E}( \vert u \vert \vee v) \leq \F(u)+  \mathcal{E}(v).
\end{align*}
Hence, by \cref{thm_characterisation_domination_and_abseqn}, \(S\) is dominated by \(T\).
\end{proof}

\begin{corollary}\label{main theo coro}
In the context of the previous theorem, let there be a third local and order preserving semigroup \(R\) generated by a convex, lower semicontinuous functional \(\mathcal{G}\) such that
\begin{align*}
    \vert R_t(u)\vert  \leq  S_t(\vert u \vert )
\end{align*}
and
\begin{align*}
    \vert R_t(u)\vert  \leq  -S_t(-\vert u \vert ).
\end{align*}
Then there exists a function \(B_{\mathcal{G}}\) satisfying the same properties as \(B\) (the conditions \eqref{B1}-\eqref{B4}) such that 
\begin{align*}
    \mathcal{G}(u)=\mathcal{E}(u)+\int_{\hat{X}} B_{\mathcal{G}} (x,u(x)) \; \mathrm{d}\mu
\end{align*}
for \(u \in D(\mathcal{G})\). If there is closed subset \(A\subseteq \hat{X}\) such that
\begin{align*}
    \mathcal{G}(u)=\mathcal{E}(u)+\int_{A} B_{\mathcal{G}} (x,u(x)) \; \mathrm{d}\mu
\end{align*}
for \(u \in D(\mathcal{G})\) then
\begin{align*}
    \F(u)=\mathcal{E}(u)+\int_{A} B_{\F} (x,u(x)) \; \mathrm{d}\mu.
\end{align*}
\end{corollary}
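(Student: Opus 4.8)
The plan is to treat the two assertions of \cref{main theo coro} separately, the first being a quick consequence of \cref{main theo} and the second carrying the actual content.

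\textbf{The representation of \(\mathcal G\).} Since \(S\) is order preserving with \(S_t0=0\), the function \(S_t\vert u\vert\) is nonnegative, so \(\vert R_tu\vert\le S_t\vert u\vert=\bigl\vert S_t\vert u\vert\bigr\vert\le T_t\vert u\vert\) for all \(t\ge0\) and \(u\in\lspace\); thus domination is transitive and \(R\) is dominated by \(T\). As \(\mathcal G\) is local and \(R\) is order preserving by hypothesis, \cref{main theo} applies with \(\mathcal G\) in place of \(\mathcal F\) and produces a finite, regular, \(\normcap\)-absolutely continuous Borel measure \(\mu_{\mathcal G}\) and a Borel function \(B_{\mathcal G}\) with the stated properties such that \(\mathcal G(u)=\mathcal E(u)+\int_{\hat X}B_{\mathcal G}(x,u(x))\,\mathrm d\mu_{\mathcal G}\) on \(D(\mathcal G)\). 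Applying \cref{rieszremark} to the four functionals obtained by restricting \(\psi_{\mathcal F}:=\mathcal F-\mathcal E\) and \(\psi_{\mathcal G}:=\mathcal G-\mathcal E\) to the positive and negative cones of \(\mathfrak D\) — each of which satisfies the hypotheses of \cref{riez-markov type}, exactly as in the proof of \cref{main theo} — and choosing every representing measure with support inside the support of the corresponding functional, one obtains a single measure \(\mu\) for which both \(\mathcal F(u)=\mathcal E(u)+\int_{\hat X}B_{\mathcal F}(x,u(x))\,\mathrm d\mu\) and \(\mathcal G(u)=\mathcal E(u)+\int_{\hat X}B_{\mathcal G}(x,u(x))\,\mathrm d\mu\), after reassembling the cone-wise densities into bi-monotone ones as in \cref{main theo}. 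This proves the first assertion and fixes the notation \(B_{\mathcal F}\).

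\textbf{The comparison \(\psi_{\mathcal F}\le\psi_{\mathcal G}\).} The semigroup \(u\mapsto-S_t(-u)\) is the one generated by \(\widetilde{\mathcal F}(u):=\mathcal F(-u)\), which is again a Dirichlet form because properties \eqref{diricheqn1} and \eqref{diricheqn2} are invariant under \(u\mapsto-u\); likewise \(\mathcal G\) is a Dirichlet form, being order preserving and, by domination by the \(L^\infty\)-contractive \(S\), itself \(L^\infty\)-contractive. The two hypotheses on \(R\) say precisely that \(R\) is dominated by the semigroups of \(\mathcal F\) and of \(\widetilde{\mathcal F}\). Feeding \((\mathcal F,\mathcal G)\), respectively \((\widetilde{\mathcal F},\mathcal G)\), into the roles of \((\mathcal E,\mathcal F)\) in \cref{thm_characterisation_domination_and_abseqn} and inserting \(v=0\) gives \(\mathcal F(u)\le\mathcal G(u)\) for \(u\ge0\), respectively \(\widetilde{\mathcal F}(-u)=\mathcal F(u)\le\mathcal G(u)\) for \(u\le0\). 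Since \(\mathcal E(u)\le\mathcal F(u)<\infty\) for \(u\in D(\mathcal F)\) and \(D(\mathcal G)\subseteq D(\mathcal F)\), this yields \(\psi_{\mathcal F}(u)\le\psi_{\mathcal G}(u)\) for every one-signed \(u\in\mathfrak D\); for general \(u\), the locality of \(\psi_{\mathcal F}\) and \(\psi_{\mathcal G}\) together with \(\psi_\bullet(u)=\psi_\bullet(u^+)+\psi_\bullet(-u^-)\) extends the inequality to all of \(\mathfrak D\). (This is where both domination hypotheses are genuinely used: the first controls \(\psi_{\mathcal F}\) on the positive cone, the second on the negative cone.)

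\textbf{Localization to \(A\).} The hypothesis of the second assertion reads \(\psi_{\mathcal G}(u)=\int_AB_{\mathcal G}(x,u(x))\,\mathrm d\mu\) on \(D(\mathcal G)\); in particular \(\psi_{\mathcal G}(u)=0\) whenever \(u\) vanishes on an open neighbourhood of \(A\), since then \(u\vert_A=0\) and \(B_{\mathcal G}(\cdot,0)=0\) \(\mu\)-a.e. Fix \(x\notin A\) and, by normality of \(\hat X\), an open \(U\ni x\) with \(\overline U\cap A=\emptyset\). If \(u\in D(\psi_{\mathcal F})\) is supported in \(U\), then \(u\) vanishes on the open set \(\hat X\setminus\overline U\supseteq A\), so \(0\le\psi_{\mathcal F}(u)\le\psi_{\mathcal G}(u)=0\). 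Hence \(\supp(\psi_{\mathcal F})\subseteq A\), and \(\supp(\psi_{\mathcal G})\subseteq A\) directly. Because the representing measures were chosen with support inside the support of the functional, \(\mu\) is concentrated on the closed set \(A\), i.e. \(\mu(\hat X\setminus A)=0\), and therefore \(\mathcal F(u)=\mathcal E(u)+\int_{\hat X}B_{\mathcal F}(x,u(x))\,\mathrm d\mu=\mathcal E(u)+\int_AB_{\mathcal F}(x,u(x))\,\mathrm d\mu\), which is the claim.

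\textbf{Main obstacle.} The delicate point is the step ``\(\psi_{\mathcal G}(u)=0\) whenever \(u\) vanishes near \(A\)'': the representation of \(\mathcal G\) is asserted only on \(D(\mathcal G)\), so one must know that such a \(u\in D(\mathcal F)\) in fact lies in \(D(\mathcal G)\) — equivalently, that \(\mathcal G\) agrees with \(\mathcal E+\int_AB_{\mathcal G}\,\mathrm d\mu\) as an extended-real-valued functional, not merely on its domain. Here the separation results of Section 4 enter: for bounded such \(u\) the disjoint closed sets \(\supp(u)\) and \(A\) can be separated by functions of \(\mathfrak D\cap C(\hat X)\) via \cref{septheo2} and \cref{septheo3}, which agree with a multiple of \(u\) near \(\supp(u)\) and vanish on a neighbourhood of \(A\); combining them with a dense sequence from \(D(\mathcal F)\) one approximates \(u\) in \(\mathfrak D\) by functions manifestly in \(D(\mathcal G)\), and lower semicontinuity of \(\psi_{\mathcal G}\) then forces \(\psi_{\mathcal G}(u)=0\). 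The unbounded case follows by truncation using assumption (A2). Keeping straight which of \(D(\mathcal F)\), \(D(\mathcal G)\), \(\mathfrak D\) each auxiliary function belongs to is the part of the argument that demands the most care.
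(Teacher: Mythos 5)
Your first two steps (transitivity of domination so that \(\cref{main theo}\) applies to \(\mathcal G\), a common representing measure via \cref{rieszremark}, and the inequality \(\psi_{\F}\le\psi_{\mathcal G}\) extracted from the two domination hypotheses through \cref{thm_characterisation_domination_and_abseqn} with \(v=0\) on each cone) match the paper's proof, which states the inequality in the equivalent form \(\int B_{\mathcal G}(x,u(x))\,\mathrm d\mu\ge\int B_{\F}(x,u(x))\,\mathrm d\mu\). For the localization to \(A\) you take a genuinely different route. The paper argues by contradiction at the level of a single function: if \(\int_{A^c}B_{\F}(x,u(x))\,\mathrm d\mu>0\) for some \(u\in D(\mathcal G)\), inner regularity produces a compact \(K\subseteq A^c\) carrying positive mass, the separation theorems produce \(v\) with \(v=u\) on \(K\) and \(v=0\) on \(A\), and then \(\mathcal E(v)=\mathcal G(v)=\F(v)\) forces \(\int_K B_{\F}(x,u(x))\,\mathrm d\mu=\int_K B_{\F}(x,v(x))\,\mathrm d\mu=0\), a contradiction. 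You instead show \(\supp(\psi_{\F})\subseteq A\) and \(\supp(\psi_{\mathcal G})\subseteq A\) and invoke the clause \(\supp(\mu)\subseteq\supp(\psi)\) of \cref{riez-markov type}(ii) to conclude \(\mu(\hat X\setminus A)=0\). Your conclusion is formally stronger (the measure itself is concentrated on \(A\), so the identity \(\int_{\hat X}=\int_A\) holds for every \(u\), not just the one under scrutiny) and arguably cleaner, at the cost of having to verify the support containment for the cone-restricted functionals actually fed into \cref{rieszremark}; that verification is routine.

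Both routes, however, pivot on the same delicate point, which you correctly isolate in your final paragraph: one must evaluate \(\psi_{\mathcal G}\) (equivalently \(\mathcal G\)) at a cut-off function vanishing near \(A\), and the hypothesis only describes \(\mathcal G\) on \(D(\mathcal G)\). The paper passes over this silently (its \(v=\hat f\vee u\wedge(-\hat f)\)-type truncation is asserted to lie in \(\mathfrak D_A\) and to satisfy \(\mathcal G(v)=\mathcal E(v)\) without checking \(v\in D(\mathcal G)\)), so you are not held to a higher standard here; but your proposed repair is not yet a proof: approximating \(u\) in \(\mathfrak D\) by functions built from \(\mathfrak D\cap C(\hat X)\) and a dense sequence in \(D(\F)\) does not make the approximants \emph{manifestly} members of \(D(\mathcal G)\) (membership in \(\mathfrak D\) or even in \(D(\F)\) gives no upper bound on \(\mathcal G\)), and lower semicontinuity of \(\mathcal G\) bounds \(\mathcal G(u)\) from below along such a sequence, not from above. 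If you want to close this point, you should argue as the paper implicitly does — starting from a given \(u\in D(\mathcal G)\) and producing the cut-off inside \(D(\mathcal G)\) using that \(\mathcal G\) is a Dirichlet form (hence \(D(\mathcal G)\) is stable under the relevant lattice truncations against elements of \(D(\mathcal G)\)) — rather than by density in \(\mathfrak D\).
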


\begin{proof}
\cref{main theo} and \cref{rieszremark} imply the existence of \(B_{\mathcal{G}}\) such that 
\begin{align*}
    \mathcal{G}(u)=\mathcal{E}(u)+\int_{\hat{X}} B_{\mathcal{G}} (x,u(x)) \; \mathrm{d}\mu
\end{align*}
holds. By \cref{thm_characterisation_domination_and_abseqn}, the two sided domination we assumed yields
\begin{align*}
  \int_{\hat{X}} B_{\mathcal{G}} (x,u(x)) \; \mathrm{d}\mu \geq \int_{\hat{X}} B_{\F} (x,u(x)) \; \mathrm{d}\mu.  
\end{align*}
Note that we now need both domination assumptions, in contrast to the previous theorem, since \(\F\) is not necessarily symmetric and \(-S_t(- \cdot)\) is the semigroup generated by \( \F(- \cdot)\), see \cite{ChWa19} for a similar discussion.

Now let us assume there is a set \(A\) as in the statement and \(U=A^c\). Thus, \(\mathcal{E}(f)=\mathcal{G}(f)=\F(f)\) for every \(f \in \mathfrak{D}_A\). Let us assume, that there is a function \(u \in D(\mathcal{G})\) such that 
\begin{align*}
    \F(u)>\mathcal{E}(u)+\int_{A} B_{\F} (x,u(x)) \; \mathrm{d}\mu,
\end{align*}
that is 
\begin{align*}
   \int_{U} B_{\F}(x,u(x)) \; \mathrm{d}\mu>0.
\end{align*}
Since \(\mu\) is regular, the net \((1_K)_{K\subseteq U \atop K \text{ compact}}\) converges pointwise \(\mu\)-almost everywhere to \(1_U\). Hence, by the dominated convergence theorem, there is a compact set \(K \subseteq U\) such that
\begin{align*}
    \int_{K} B_{\F} (x,u(x)) \; \mathrm{d}\mu>0.
\end{align*}
By the separation theorems, there is a function \(v\) such that \(v=0\) on A and \(v=u\) on \(K\). Therefore 
\begin{align*}
    \int_{K} B_{\F}(x,v(x)) \; \mathrm{d}\mu=0,
\end{align*}
since \(v \in \mathfrak{D}_A\). But 
\begin{align*}
    0=\int_{K} B_{\F}(x,v(x)) \; \mathrm{d}\mu=\int_{K} B_{\F}(x,u(x)) \; \mathrm{d}\mu.
\end{align*}
This is a contradiction. Thus, no such \(u\) exists and 
\begin{align*}
    \F(f)=\mathcal{E}(f)+\int_{A} B_{\F}(x,f(x)) \; \mathrm{d}\mu
\end{align*}
for every \(f \in \mathfrak{D}\).
\end{proof}

\section{Applications}

Let us illustrate \cref{main theo} and \cref{main theo coro} with the help of some examples. At first we consider the \(p(x)\)-Laplacian.
\begin{exmp}
Let \(X\) be a bounded \(C^1\) domain in \(\mathbb{R}^n\) and \(\E^{p(x)}:\lspace \rightarrow [0,\infty]\) the energy of the \(p(x)\)-Laplacian with Neumann boundary conditions given by
\begin{align*}
    \E^{p(x)}_N (f)= \int_X \frac{ \vert \nabla f(x) \vert ^{p(x)}}{p(x)} \; \mathrm{d}x
\end{align*}
for \(f \in \lspace\). We assume \(p: X \rightarrow \mathbb{R}\) is a measurable function, such that \(1 < \inf p \leq \sup p < \infty \) and, in addition, that \(1/p\) is log-Hölder continuous. Then \cite[Theorem 9.1.7]{DHHR_Modular_Spaces} states that \(C^\infty(\overbar{X})\) is densely embedded into \(\mathfrak{D}\). Hence, \(\hat{\mathcal{A}}=C(\overbar{X})\) is a regular core of \(\E^{p(x)}_N\). Therefore, \(\hat{X}=\overbar{X}\). The assumptions (A1)-(A3) are fulfilled, since \(\E^{p(x)}_N\) is quasilinear and \cite[Theorem 8.1.6]{DHHR_Modular_Spaces} implies that \(\mathfrak{D}\) is separable, which is the assumption (A4). Let \(T_N\) be the semigroup generated by \(\E^{p(x)}_N\) and \(S\) another order preserving semigroup generated by a convex, local, lower semicontinuous functional \(\F\) such that for every \(u \in \lspace\),
\begin{align*}
    \vert S(t) (u)\vert  \leq  T_N(t) (\vert u \vert ).
\end{align*}
Then \cref{main theo} implies, that there is a positive measure \(\mu\) on \(\overbar{X}\) and a function \(B: \overbar{X}\times \mathbb{R} \rightarrow [0,\infty]\) satisfying the properties stated in \cref{main theo} such that
\begin{align*}
    \F (u)=\int_X \frac{ \vert \nabla u(x) \vert^{p(x)}}{p(x)} \; \mathrm{d}x + \int_{\overbar{X}} B(x,u(x)) \; \mathrm{d}\mu.
\end{align*}
For example the \(p(x)\)-Laplacian with Dirichlet boundary conditions, given by 
\begin{align*}
    \E_D^{p(x)}(u)= \begin{cases} \int_X \frac{|\nabla u(x)|^{p(x)}}{p(x)} \; \mathrm{d}x &\text{ if } u\in \mathfrak{D}_0=\overbar{C^\infty_C(X)}, \\
                     \infty &\text{ otherwise }
                     \end{cases} 
\end{align*}
generates such a semigroup. In fact we can write 
\begin{align*}
\E_D^{p(x)}(u)=\int_X \frac{|\nabla u(x)|^{p(x)}}{p(x)} \; \mathrm{d}x+\int_{\overbar{X}} B(x,u(x)) \; \mathrm{d}\mu,
\end{align*}
where \(\mu\) is the \((n-1)\)-dimensional Hausdorff measure and 
\begin{align*}
    B(x,s)=\begin{cases}
    0 & \text{ if } x \in X \\
    0 & \text{ if } x \in \partial X, s=0 \\
    \infty & \text{ else. }
    \end{cases}
\end{align*}
Let us denote by \(T_D\) the semigroup generated by the  \(p(x)\)-Laplacian with Dirichlet boundary conditions.
\cref{main theo coro} implies, that if a local and order preserving semigroup \(S\) generated by a convex and lower semicontinuous functional \(\F\) is sandwiched between \(T_N\) and \(T_D\), that is
\begin{align*}
    &\vert T_D (t) (u)\vert  \leq  S(t) (\vert u \vert ) ,\\
    &\vert T_D (t) (u)\vert  \leq  -S(t) (-\vert u \vert ), \\
    &\vert S(t) (u)\vert  \leq  T_N(t) (\vert u \vert ),
\end{align*}
then
\begin{align*}
    \F(u)= \int_X \frac{|\nabla u(x)|^{p(x)}}{p(x)} \; \mathrm{d}x+\int_{\partial X} B_S(x,u(x)) d\nu,
\end{align*}
where \(\nu\) is a measure on \(\partial X\) and \(B_S\) fulfills the properties stated in \cref{main theo}. Hence, \(\F\) is the energy of a realization of the \(p(x)\)-Laplacian with Robin boundary conditions. In a certain way, this implies that all {\em local} semigroups sandwiched between the semigroup generated by the \(p(x)\)-Laplacian with Neumann boundary conditions and the semigroup generated by the \(p(x)\)-Laplacian with Dirichlet boundary conditions are generated by the \(p(x)\)-Laplacian with Robin boundary conditions. This is exactly the statement proved in \cite{AW_Laplace_what_is_in_between} for the Laplace operator and in \cite{CW_p_Laplace_what_is_in_between} for the \(p\)-Laplace operator. In the recent article \cite{Ak18} (see also \cite{ArChDj21}) it was shown that at least for the case of the Laplace operator, the assumption that the sandwiched semigroup is local can be dropped; the locality is automatic and follows from positivity. It is not clear whether the assumption of locality can also be dropped in the nonlinear situation. 
\end{exmp}

In general it is not possible to define a meaningful version with Dirichlet boundary conditions for an arbitrary Dirichlet form \(\mathcal{E}\). Both functionals defined below are examples of this.

\begin{exmp}
Let \(X\) be a bounded \(C^1\) domain in \(\mathbb{R}^n\) and 
\begin{align*}
    \mathcal{E}(u)=\frac{1}{2} \int_X u^2 \; \mathrm{d}x.
\end{align*}
Obviously \(\mathcal{E}\) is a symmetric Dirichlet form, which satisfies all requirements of \cref{main theo} and generates the semigroup
\begin{align*}
    T_t(u)=e^{-t} u.
\end{align*}
In addition \(\hat{\mathcal{A}}= C(\overbar{X})\) is a core of \(\D\). But since \(\normcap\) is just the Lebesgue measure, \(\partial X\) is polar. \cref{main theo} then implies, that every order preserving semigroup \(S\) generated by a local functional \(\F\), which is pointwise exponentially decaying, that is
\begin{align*}
\vert S_t(u) \vert \leq e^{-t} \vert u \vert
\end{align*}
for every \(t \geq 0\) and \(u \in \lspace\), is generated by 
\begin{align*}
    \F(u)=\frac{1}{2} \int_X u^2 dm + \int_X B(x,u) \; \mathrm{d}\mu,
\end{align*}
where \(\mu\) is a Borel measure on \(X\) which is absolutely continuous with respect to the Lebesgue measure and \(B\) some function satisfying the properties stated in \cref{main theo}.

Even further, let us assume that, for an order preserving semigroup \(R\) which is generated by a local, lower semicontinuous, convex functional \(\mathcal{G}\),
\begin{align*}
  \vert R_t(u) \vert \leq  \vert u \vert  
\end{align*}
holds for every \(t\geq 0\) and \(u \in \lspace\). Since the semigroup \(T_t=id\) is generated by the Dirichlet form \(\mathcal{E}=0\), \cref{main theo} implies
\begin{align*}
    \mathcal{G}(u)=\int_X B(x,u) \; \mathrm{d}\mu,
\end{align*}
where, as before, \(\mu\) is a Borel measure on \(X\) which is absolutely continuous with respect to the Lebesgue measure and \(B\) is as in \cref{main theo}. Hence, semigroups generated by local functionals, which depend on derivatives, can not be pointwise decreasing. 
\end{exmp}

The examples we examined all lived on subsets of \(\mathbb{R}^n\), but this is not necessary. In fact the theorems hold for manifolds and arbitrary metric measure spaces. The main problem is to identify the space \(\hat{X}\). 

\bibliographystyle{alpha}

\end{document}